\newcommand*{\MRref}[2]{\linebreak[0] \href{http://www.ams.org/mathscinet-getitem?mr=#1}{MR \textbf{#1}}}
\tikzset{cd/.style=matrix of math nodes,row sep=2em,column sep=2em, text height=1.5ex, text depth=0.5ex}
\tikzset{cdar/.style=->,auto}
\tikzset{mid/.style={anchor=mid}} 
\tikzset{dar/.style={double,double equal sign distance,-implies}}
\tikzset{narrowfill/.style={inner sep=1pt, fill=white}}
\DeclarePairedDelimiter{\bigbr}{\bigl(}{\bigr)}
\DeclareMathOperator{\Hom}{Hom}
\DeclareMathOperator{\Prim}{Prim}
\DeclareMathOperator{\Ext}{Ext}
\DeclareMathOperator{\rank}{rank}
\newcommand*{\KK}{\textup{KK}}
\newcommand*{\K}{\textup{K}}
\newcommand*{\XK}{\textup{XK}}
\newcommand*{\OK}{\mathbb{O}\textup{K}}
\newcommand*{\Res}{\textup{Res}}
\newcommand*{\Colim}{\textup{Colim}}
\newcommand*{\FK}{\textup{FK}}
\newcommand{\Catgunnar}{\mathcal{R}}
\newcommand*{\Ideal}{\mathfrak I}
\newcommand*{\Cstarcat}{\mathfrak{C^*alg}}
\newcommand*{\KKcat}{\mathfrak{KK}}
\newcommand*{\GCAb}{\mathfrak{Ab}_{\mathrm{c}}^{\mathbb{Z}/2}}
\newcommand*{\GCMod}[1]{\mathfrak{Mod}(#1)}  
\newcommand*{\AbelianCat}{\mathcal{A}}
\newcommand*{\op}{\mathrm{op}}
\newcommand*{\C}{\mathbb{C}}
\newcommand*{\Z}{\mathbb{Z}}
\newcommand*{\N}{\mathbb{N}}
\newcommand*{\Open}{\mathbb{O}}     
\newcommand*{\Compacts}{\mathbb{K}}
\newcommand*{\nb}{\nobreakdash}  
\newcommand*{\Bootstrap}{{\mathcal B}}
\newcommand*{\Cuntz}{{\mathcal O}}
\newcommand*{\Cstar}{\texorpdfstring{$C^*$\nb-}{C*-}}
\newcommand*{\Star}{\texorpdfstring{$^*$\nb-}{*-}}
\newcommand*{\defeq}{\mathrel{\vcentcolon=}}
\newcommand*{\inOb}{\mathrel{\in\in}}
\newcommand*{\Rordam}{R{\o}rdam}
\newcommand{\Reps}{\mathfrak{Rep}}
\newcommand{\CoSheaf}{\mathfrak{CoSh}}
\newcommand{\PreCoSheaf}{\mathfrak{PreCoSh}}
\theoremstyle{plain}
\numberwithin{equation}{section}
\theoremstyle{plain}
\newtheorem{theorem}[equation]{Theorem}
\newtheorem{lemma}[equation]{Lemma}
\newtheorem{corollary}[equation]{Corollary}
\newtheorem{proposition}[equation]{Proposition}
\theoremstyle{definition}
\newtheorem{definition}[equation]{Definition}
\theoremstyle{remark}
\newtheorem{remark}[equation]{Remark}
\newtheorem{remarks}[equation]{Remarks}
\title[Kirchberg X-algebras with real rank zero...]{Kirchberg X-algebras with real rank zero\\ and intermediate cancellation}
\author{Rasmus Bentmann}
\address{Department of Mathematical Sciences\\University of
Copenhagen\\Universitets\-parken~5\\2100 Copenhagen \O \\Denmark}
\email{bentmann@math.ku.dk}
\thanks{The author was supported by the Danish National Research Foundation (DNRF) through the Centre for Symmetry and Deformation and by the Marie Curie Research Training Network EU-NCG}
\begin{document}

\begin{abstract}
A universal coefficient theorem is proved for \Cstar{}al\-ge\-bras over an arbitrary finite $T_0$\nb-space~$X$ which have vanishing boundary maps. Under bootstrap assumptions, this leads to a complete classification of unital/stable real-rank-zero Kirchberg $X$\nb-al\-ge\-bras with intermediate cancellation. Range results are obtained for (unital) purely infinite graph \Cstar{}algebras with intermediate cancellation and Cuntz--Krieger algebras with intermediate cancellation. Permanence results for extensions of these classes follow.
\end{abstract}

\maketitle

\section{Introduction}

Since Eberhard Kirchberg's groundbreaking classification theorem for non-simple $\Cuntz_\infty$\nb-ab\-sor\-bing nuclear \Cstar{}al\-ge\-bras~\cite{Kirchberg}, much effort has gone into the task of deciding when two separable \Cstar{}al\-ge\-bras over a topological space~$X$ are $\KK(X)$\nb-equiv\-a\-lent. This is a hard task even when~$X$ is a finite space. The usual way to go is to prove equivariant versions of the \emph{universal coefficient theorem} of Rosenberg and Schochet~\cite{RS}. For \emph{some} spaces, such have been established in \cites{Bonkat:Thesis,Restorff:Thesis,MN:Filtrated,Bentmann:Thesis,BK}. In~\cite{Bentmann-Meyer}, a complete classification in purely algebraic terms of objects in the equivariant bootstrap class $\Bootstrap(X)\subset\KKcat(X)$ up to $\KK(X)$\nb-equiv\-a\-lence is given under the assumption that~$X$ is a so-called \emph{unique path space.} Nevertheless, it seems fair to state that, for \emph{most} finite spaces, no classification is available at the present time.

In this note we establish a universal coefficient theorem computing the groups $\KK_*(X;A,B)$ which holds for all finite $T_0$\nb-spa\-ces~$X$---but only under certain $\K$\nb-the\-o\-ret\-i\-cal assumptions on~$A$. More precisely, we have to ask that the boundary maps in all six-term exact sequences arising from inclusions of distinguished ideals vanish. If~$A$ is separable, purely infinite and tight over~$X$, this condition is equivalent to $A$~having real rank zero and the following non-stable $\K$\nb-the\-ory property suggested to us by Mikael \Rordam: if~$p$ and~$q$ are projections in~$A$ which generate the same ideal and which give rise to the same element in $\K_0(A)$, then~$p$ and~$q$ are Murray-von Neumann equivalent. This property has been considered earlier by Lawrence G.~Brown~\cite{Brown:intermediate_cancellation}. Since the property is stronger than Brown-Pedersen's weak cancellation property and weaker than Rieffel's strong cancellation property (compare~\cite{Brown-Pedersen:Non-stable}), it is referred to as \emph{intermediate cancellation}.

The invariant appearing in our universal coefficient theorem, denoted by~$\XK$, is relatively simple: for a point $x\in X$, let $U_x$ denote its minimal open neighbourhood. Then $\XK(A)$ consists of the collection $\{\K_*\bigbr{A(U_x)}\mid x\in X\}$ together with the natural maps induced by the ideal inclusions $A(U_x)\hookrightarrow A(U_y)$ for $U_x\subseteq U_y$. Hence~$\XK(A)$ can be regarded as a representation of the partially ordered set~$X$ with values in countable $\Z/2$-graded Abelian groups. Equivalently, we may view~$\XK(A)$ as a countable $\Z/2$-graded module over the integral incidence algebra~$\Z X$ of~$X$. The fact that the ring~$\Z X$ itself is ungraded allows us to show that the universal coefficient sequence for $\KK_*(X;A,B)$ splits if both~$A$ and~$B$ have vanishing boundary maps and that an object in the equivariant bootstrap class~$\Bootstrap(X)$ with vanishing boundary maps is $\KK(X)$\nb-equiv\-a\-lent to a commutative \Cstar{}algebra over~$X$.

A Kirchberg $X$\nb-algebra is a nuclear purely infinite separable tight \Cstar{}algebra over~$X$. Combining our universal coefficient theorem with Kirchberg's theorem, we find that the invariant~$\XK$ strongly classifies stable real-rank-zero Kirchberg $X$\nb-al\-ge\-bras with intermediate cancellation and simple subquotients in the bootstrap class up to \Star{}isomorphism over~$X$.

We also describe the range of the invariant~$\XK$ on this class of \Cstar{}algebras over~$X$, but only in the case that~$X$ is a unique path space. To this aim, we use a second invariant denoted by~$\OK$. It is defined similarly to~$\XK$ but it contains the $\K$-groups of \emph{all} distinguished ideals. The target category of~$\OK$ is the category of precosheaves on the topology of~$X$ with values in countable $\Z/2$-graded Abelian groups. It turns out that the range of~$\OK$ on the class of stable real-rank-zero Kirchberg $X$\nb-al\-ge\-bras with intermediate cancellation and simple subquotients in the bootstrap class consists precisely of those precosheaves which satisfy a certain cosheaf condition and have injective structure maps; following Bredon~\cite{Bredon:Cosheaves}, we call these \emph{flabby cosheaves}.

Appealing to the so-called meta theorem \cite{Eilers-Restorff-Ruiz:generalized_meta_theorem}*{Theorem~3.3}, we can achieve strong classification also in the unital case. The invariant in this case, denoted by~$\OK^+$, consists of the functor~$\OK$ together with the unit class in the $\K_0$\nb-group of the whole \Cstar{}algebra.

We apply our results to the classification programme of (purely infinite) graph \Cstar{}al\-ge\-bras. Here real rank zero comes for free, as do separability, nuclearity and bootstrap assumptions. We determine the range of the invariant~$\OK$ on the class of purely infinite tight graph \Cstar{}al\-ge\-bras over~$X$ with intermediate cancellation. We also determine the range of the invariant~$\OK^+$ on the class of unital purely infinite tight graph \Cstar{}al\-ge\-bras over~$X$ with intermediate cancellation and on the class of tight Cuntz--Krieger algebras over~$X$ with intermediate cancellation. Here we use a result from~\cite{Arklint-Bentmann-Katsura:Reduction} that allows to construct graph \Cstar{}al\-ge\-bras with prescribed $\K$-theory data.

As an application, we show that the class of Cuntz--Krieger algebras with intermediate cancellation is, in a suitable sense, stable under extensions (see Theorem~\ref{thm:extensions} for the precise statement). A similar result is obtained in~\cite{Arklint-Bentmann-Katsura:Reduction}*{Corollary 9.15}, but under different assumptions: in~\cite{Arklint-Bentmann-Katsura:Reduction} we make assumptions on the primitive ideal space to make the classification machinery work; in \emph{this} article we use intermediate cancellation to achieve that. Similar permanence results hold for (unital) purely infinite graph \Cstar{}algebras with intermediate cancellation.

\section{Preliminaries}

Throughout, let~$X$ be an arbitrary finite $T_0$\nb-space. A subset of~$X$ is \emph{locally closed} if it is the difference of two open subsets of~$X$. Every point $x\in X$ possesses a smallest open neighbourhood denoted by~$U_x$. The \emph{specialization preorder} on~$X$ is the partial order defined such that $x\leq y$ if and only if $U_y\subseteq U_x$. For two points $x,y\in X$, there is an arrow from~$y$ to~$x$ in the Hasse diagram associated to the specialization preorder on~$X$ if and only if~$y$ is a closed point in $U_x\setminus\{x\}$; in this case we write $y\to x$. We say that~$X$ is a \emph{unique path space} if every pair of points in~$X$ is connected by at most one directed path in the Hasse diagram associated to the specialization preorder on~$X$.

A \emph{\Cstar{}algebra over~$X$} is a pair $(A,\psi)$ consisting of a \Cstar{}algebra~$A$ and a continuous map $\psi\colon\Prim(A)\to X$. The pair $(A,\psi)$ is called \emph{tight} if the map~$\psi$ is a homeomorphism. We usually omit the map~$\psi$ in order to simplify notation. There is a lattice isomorphism between the open subsets in $\Prim(A)$ and the ideals in~$A$. Hence every open subset~$U$ of~$X$ gives rise to a \emph{distinguished ideal}~$A(U)$ in~$A$. A~\emph{\Star{}ho\-mo\-mor\-phism over~$X$} is a \Star{}ho\-mo\-mor\-phism mapping distinguished ideals into corresponding distinguished ideals. We obtain the category $\Cstarcat(X)$ of \Cstar{}al\-ge\-bras over~$X$ and \Star{}ho\-mo\-mor\-phisms over~$X$. Any locally closed subset~$Y$ of~$X$ determines a \emph{distinguished subquotient}~$A(Y)$ of~$A$. There is a natural way to regard the subquotient~$A(Y)$ as a \Cstar{}algebra over~$Y$. For a point $x\in X$, we let $i_x\C$ denote the \Cstar{}algebra over~$X$ given by the \Cstar{}algebra of complex numbers~$\C$ with the map $\Prim(\C)\to X$ taking the unique primitive ideal in~$\C$ to~$x$. For more details on \Cstar{}algebras over topological spaces, see~\cite{MN:Bootstrap}.

Eberhard Kirchberg developed a version of Kasparov's $\KK$\nb-theory for separable \Cstar{}algebras over~$X$ in~\cite{Kirchberg} denoted by $\KK(X)$. In~\cite{MN:Bootstrap}, Ralf Meyer and Ryszard Nest establish basic properties of the resulting category~$\KKcat(X)$, describe a natural triangulated category structure on it, and give an appropriate definition of the equivariant bootstrap class $\Bootstrap(X)\subset\KKcat(X)$: it is the smallest triangulated subcategory of $\KKcat(X)$ that contains the object set $\{i_x\C\mid x\in X\}$ and is closed under countable direct sums. The usual bootstrap class in~$\KKcat$ of Rosenberg and Schochet is denoted by~$\Bootstrap$. The translation functor on $\KKcat(X)$ is given by suspension and denoted by~$\Sigma$. The category~$\KKcat(X)$ is tensored over~$\KKcat$; in particular, we can talk about the stabilization $A\otimes\Compacts$ of an object~$A$ in~$\KKcat(X)$. Here~$\Compacts$ denotes the \Cstar{}algebra of compact operators on some countably infinite-dimensional Hilbert space.

For an object~$M$ in a $\Z/2$-graded category, we write $M_0$~for the even part, $M_1$~for the odd part and~$M[1]$ for the shifted object. If~$N$ is an object in the ungraded category, we let~$N[i]$ denote the corresponding graded object concentrated in degree~$i$. We write $C\inOb\mathcal C$ to denote that~$C$ is an object in a category~$\mathcal C$.

\section{Vanishing boundary maps}

In this section, we introduce two $\K$-theoretical conditions for \Cstar{}algebras over~$X$ that are sufficient, as we shall see later, to obtain a universal coefficient theorem. We provide alternative formulations of these conditions for separable purely infinite tight \Cstar{}algebras over~$X$.

Given a \Cstar{}algebra~$A$ over~$X$ and open subsets $U\subseteq V\subseteq X$, we have a six-term exact sequence
\begin{equation}
	\label{eq:six-term_sequence}
  \begin{gathered}
  \begin{tikzpicture}
    \matrix(m)[cd]{
    \K_1\bigbr{A(U)} & \K_1\bigbr{A(V)} & \K_1\bigbr{A(V)/A(U)} \\
    \K_0\bigbr{A(V)/A(U)} & \K_0\bigbr{A(V)} & \K_0\bigbr{A(U)}.\\
    };
    \draw[cdar] (m-1-1) -- (m-1-2);
    \draw[cdar] (m-2-2) -- (m-2-1);
    \draw[cdar] (m-1-2) -- (m-1-3);
    \draw[cdar] (m-2-3) -- (m-2-2);
    \draw[cdar] (m-1-3) -- node {\(\partial_1\)} (m-2-3);
    \draw[cdar] (m-2-1) -- node {\(\partial_0\)} (m-1-1);
  \end{tikzpicture}
  \end{gathered}
\end{equation}

\begin{definition}
Let~$A$ be a \Cstar{}algebra over~$X$. We say that \emph{$A$ has vanishing index maps} if the map $\partial_1\colon\K_1\bigbr{A(V)/A(U)}\to\K_0\bigbr{A(U)}$ vanishes for all open subsets $U\subseteq V\subseteq X$. Similarly, we  say that \emph{$A$ has vanishing exponential maps} if the map $\partial_0\colon\K_0\bigbr{A(V)/A(U)}\to\K_1\bigbr{A(U)}$ vanishes for all open subsets $U\subseteq V\subseteq X$. We say that \emph{$A$ has vanishing boundary maps} if it has vanishing index maps and vanishing exponential maps.
\end{definition}

\begin{remarks}
If~$A$ is a tight \Cstar{}algebra over~$X$ then~$A$ has vanishing exponential maps if and only if the underlying \Cstar{}algebra of~$A$ is $\K_0$\nb-liftable in the sense of \cite{Rordam_Pasnicu:PI}*{Definition~3.1}.

In the definition above, we could replace the subset $V\subseteq X$ with~$X$, but to us the definition seems more natural as it stands.

Another, a priori \emph{stronger} condition consists in the vanishing of all boundary maps arising from inclusions of distinguished \emph{subquotients}. The following lemma shows that this assumption is in fact equivalent to the one in our definition.
\end{remarks}

\begin{lemma}
  \label{lem:all_boundary_maps}
Let $Y\subseteq X$ be locally closed. Let $U\subseteq Y$ be relatively open. Write $C=Y\setminus U$. Let~$A$ be a \Cstar{}algebra over~$X$ with vanishing index\textup{/}exponential maps. Then the index\textup{/}exponential map corresponding to the extension $A(U)\rightarrowtail A(Y)\twoheadrightarrow A(C)$ vanishes, too.
\end{lemma}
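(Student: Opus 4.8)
The plan is to reduce the given subquotient extension to a quotient of an extension of genuine distinguished ideals, and then to conclude by naturality of the boundary maps. First I would express everything through open subsets of~$X$. Since~$Y$ is locally closed, write $Y=V\setminus W$ with $W\subseteq V$ open in~$X$, so that $A(Y)=A(V)/A(W)$. As~$U$ is relatively open in~$Y$, we have $U=G\cap Y$ for some open $G\subseteq X$; putting $V'=(G\cap V)\cup W$ gives an open set with $W\subseteq V'\subseteq V$ and $U=V'\setminus W$, whence $A(U)=A(V')/A(W)$, while $C=V\setminus V'$ and $A(C)=A(V)/A(V')$. Thus the extension under consideration takes the form
\[
  A(V')/A(W)\into A(V)/A(W)\prto A(V)/A(V').
\]

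Next I would bring in the extension of distinguished ideals
\[
  A(V')\into A(V)\prto A(V)/A(V')
\]
attached to the open inclusion $V'\subseteq V$. There is a canonical morphism from this extension to the previous one: the identity on the common quotient $A(V)/A(V')$, together with the quotient maps $A(V')\to A(V')/A(W)$ and $A(V)\to A(V)/A(W)$ on the ideal and on the ambient algebra. Naturality of the index and exponential maps then produces commuting squares, showing that each boundary map~$\partial_i$ of the subquotient extension equals the map induced on $\K$\nb-theory by $A(V')\to A(V')/A(W)$ composed with the corresponding boundary map~$\partial_i$ of $A(V')\into A(V)\prto A(V)/A(V')$.

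Finally, since $W\subseteq V'\subseteq V$ are open subsets of~$X$, the hypothesis that~$A$ has vanishing index/exponential maps applies verbatim to the pair $(V',V)$ and forces the boundary maps of $A(V')\into A(V)\prto A(V)/A(V')$ to vanish. Composing with the induced map on $\K$\nb-theory, the boundary maps of $A(U)\into A(Y)\prto A(C)$ vanish as well, which is exactly the assertion. I expect the only point requiring care to be the bookkeeping of the first step---checking that~$U$ and its relative complement~$C$ in the locally closed set~$Y$ can be realised through a single chain $W\subseteq V'\subseteq V$ of open subsets of~$X$; once this presentation is available, the statement is a formal consequence of naturality and needs no $\K$\nb-theoretic input beyond the stated vanishing hypothesis.
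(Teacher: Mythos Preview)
Your proposal is correct and is essentially the same argument as the paper's. The paper simply names your auxiliary open set as $V\setminus C$ rather than constructing it as $V'=(G\cap V)\cup W$, and then writes down the same morphism of extensions with identity on $A(C)$ and the quotient maps $A(V\setminus C)\prto A(U)$, $A(V)\prto A(Y)$; your extra bookkeeping showing that $U=V'\setminus W$ and $C=V\setminus V'$ makes explicit what the paper takes for granted.
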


\begin{proof}
Write $Y=V\setminus W$ as the difference of two open subsets $W\subseteq V\subseteq X$. Consider the morphism of extensions of distinguished subquotients
  \[
  \begin{tikzpicture}
    \matrix(m)[cd]{
      A(V\setminus C)&A(V)&A(C)\\
      A(U)&A(Y)&A(C).\\
    };
    \path[>->] (m-1-1) edge (m-1-2);
    \path[>->] (m-2-1) edge (m-2-2);
    \path[->>] (m-1-2) edge (m-1-3);
    \path[->>] (m-2-2) edge (m-2-3);
    \draw[double distance = 1.5pt] (m-1-3) -- (m-2-3);
    \path[->>] (m-1-2) edge (m-2-2);
    \path[->>] (m-1-1) edge (m-2-1);
  \end{tikzpicture}
  \]
The first extension has vanishing index\textup{/}exponential map by assumption. By naturality, the same follows for the second extension.
\end{proof}

\begin{proposition}
  \label{pro:vanishing_boundaries_and extensions}
Let~$U\subseteq X$ be an open subset and write $C=X\setminus U$. Let~$A$ be a \Cstar{}algebra over~$X$. Then~$A$ has vanishing index maps if and only if the following hold:
\begin{itemize}
\item $A(U)\inOb\Cstarcat(U)$ has vanishing index maps,
\item $A(C)\inOb\Cstarcat(C)$ has vanishing index maps,
\item the index map $\K_1\bigbr{A(C)}\to\K_0\bigbr{A(U)}$ vanishes.
\end{itemize}
An analogous statement holds for vanishing exponential maps.
\end{proposition}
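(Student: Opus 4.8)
The plan is to handle the two implications separately: the forward one is routine via restriction, and the converse is the real content and goes through a braid-diagram argument. For the forward direction, suppose $A$ has vanishing index maps. For open $W_1\subseteq W_2\subseteq U$ the extension $A(W_1)\into A(W_2)\prto A(W_2)/A(W_1)$ is a defining extension of $A$ over~$X$, so its index map vanishes; this gives the first bullet. For the second, a defining extension of $A(C)$ over~$C$ has the form $A(V_1)/A(U)\into A(V_2)/A(U)\prto A(V_2)/A(V_1)$ with $U\subseteq V_1\subseteq V_2$ open, that is, the extension attached to the locally closed set $V_2\setminus U\subseteq C$ along its relatively open part $V_1\setminus U$; its index map vanishes by Lemma~\ref{lem:all_boundary_maps}. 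The third bullet is the index map of $A(U)\into A\prto A(C)$, once more a defining extension of~$A$.

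For the converse, fix open $U'\subseteq V'$, put $Z=V'\setminus U'$, and let $\partial_1$ be the index map of $A(U')\into A(V')\prto A(Z)$. The crucial preliminary is an \emph{auxiliary vanishing}: whenever $U\subseteq W\subseteq W'$ are open, the index map of $A(W)\into A(W')\prto A(W'\setminus W)$ is zero (note $W'\setminus W\subseteq C$ since $U\subseteq W$). I would prove this by applying the braid of six-term exact sequences associated with the chain of distinguished ideals $A(U)\subseteq A(W)\subseteq A(W')$. The two sub-extensions with ideal $A(U)$ have quotients $A(W\cap C)$ and $A(W'\cap C)$ inside~$C$, and their index maps vanish by naturality against the global extension $A(U)\into A\prto A(C)$ with identity ideal-component (third bullet); the remaining extension $A(W\cap C)\into A(W'\cap C)\prto A(W'\setminus W)$ lies inside~$C$, so its index map vanishes by the second bullet and Lemma~\ref{lem:all_boundary_maps}. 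The braid relations then show the target index map annihilates the image of $\K_1\bigbr{A(W'\cap C)}$, which is all of $\K_1\bigbr{A(W'\setminus W)}$ by exactness, giving the auxiliary vanishing.

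To finish I squeeze $\partial_1$ between two kernels. Applying the auxiliary vanishing with $W=U'\cup U$ and $W'=V'\cup U$, naturality along the morphism from $A(U')\into A(V')\prto A(Z)$ to $A(U'\cup U)\into A(V'\cup U)\prto A(Z\cap C)$ yields $\lambda_*\circ\partial_1=0$ for the inclusion $\lambda\colon A(U')\hookrightarrow A(U'\cup U)$. Naturality along the projection onto the $C$-part $A(U'\cap C)\into A(V'\cap C)\prto A(Z\cap C)$, whose index map vanishes by the second bullet, yields $\alpha_*\circ\partial_1=0$ for the quotient $\alpha\colon A(U')\twoheadrightarrow A(U'\cap C)$. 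It remains to check $\ker\lambda_*\cap\ker\alpha_*=0$ in $\K_0\bigbr{A(U')}$. An element of $\ker\alpha_*$ lifts to $\K_0\bigbr{A(U'\cap U)}$; its image in $\K_0\bigbr{A(U)}$ vanishes because the index map of $A(U)\into A(U'\cup U)\prto A(U'\cap C)$ is zero (third bullet, as above), while $\K_0\bigbr{A(U'\cap U)}\to\K_0\bigbr{A(U)}$ is injective because the index map of $A(U'\cap U)\into A(U)\prto A(U\setminus U')$ vanishes (first bullet). Chasing these through the relevant six-term sequences forces the element to be zero, whence $\partial_1=0$.

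The main obstacle is the organisation of this final squeeze rather than any single computation: enlarging the ideal so that it contains $A(U)$ and projecting onto the $C$-part each kill only part of $\partial_1$, and the point is that the two kernels so produced intersect trivially---an assertion that draws on all three bullets at once (the first for an injectivity, the third for a vanishing index map, both via the six-term sequences). The exponential-map statement is obtained by the identical argument with the roles of $\partial_0$ and $\partial_1$, equivalently of $\K_0$ and $\K_1$, interchanged.
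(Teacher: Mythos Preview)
Your argument is correct, but the paper's proof is considerably shorter because it makes one simplifying observation you do not: to show that all index maps vanish it suffices to prove that $\K_0\bigbr{A(V)}\to\K_0\bigbr{A(X)}$ is injective for every open $V\subseteq X$ (since injectivity of $\K_0\bigbr{A(U')}\to\K_0\bigbr{A(X)}$ forces injectivity of $\K_0\bigbr{A(U')}\to\K_0\bigbr{A(V')}$). With this reduction in hand, the paper factors the inclusion through $A(U\cup V)$ and handles the two steps directly: $\K_0\bigbr{A(V)}\to\K_0\bigbr{A(U\cup V)}$ is injective by a single morphism-of-extensions argument against $A(U\cap V)\into A(U)\prto A\bigbr{U\setminus(U\cap V)}$ (first bullet), and $\K_0\bigbr{A(U\cup V)}\to\K_0\bigbr{A(X)}$ is injective because the second and third bullets combine to make $\K_1\bigbr{A(X)}\to\K_1\bigbr{A(X\setminus(U\cup V))}$ surjective. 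No braid diagram and no kernel-intersection chase are needed.

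Your route is essentially the same decomposition carried out ``in place'' for a general pair $U'\subseteq V'$: your auxiliary vanishing is the analogue of the paper's second step (index maps vanish once the ideal contains $A(U)$), and your squeeze recovers the first step. Both arguments use all three bullets in the same places; the paper simply packages them more economically. One minor remark on your write-up: in the final chase you should make explicit that $\lambda_*\circ\iota_*\colon\K_0\bigbr{A(U'\cap U)}\to\K_0\bigbr{A(U'\cup U)}$ coincides with the map induced by the inclusion $A(U'\cap U)\hookrightarrow A(U'\cup U)$, so that its injectivity follows from the two injectivity statements you prove; as written, the logical flow of that last sentence is a bit compressed.
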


\begin{proof}
We will only prove the statement for index maps, the case of exponential maps being entirely analogous. By the previous lemma, the three conditions are necessary. To show that they are also sufficient, we consider an open subset $V\subseteq X$. It suffices to check that the map $\K_0\bigbr{A(V)}\to\K_0\bigbr{A(X)}$ is injective. We consider the morphism of extensions of distinguished subquotients
  \[
  \begin{tikzpicture}
    \matrix(m)[cd]{
      A(U\cap V)&A(U)&A\bigbr{U\setminus (U\cap V)}\\
      A(V)&A(U\cup V)&A\bigbr{(U\setminus (U\cap V)}.\\
    };
    \path[>->] (m-1-1) edge (m-1-2);
    \path[>->] (m-2-1) edge (m-2-2);
    \path[->>] (m-1-2) edge (m-1-3);
    \path[->>] (m-2-2) edge (m-2-3);
    \draw[double distance = 1.5pt] (m-1-3) -- (m-2-3);
    \path[>->] (m-1-2) edge (m-2-2);
    \path[>->] (m-1-1) edge (m-2-1);
  \end{tikzpicture}
  \]
By the first condition, the upper extension has vanishing index map. By naturality, so has the second. Hence the map $\K_0\bigbr{A(V)}\to\K_0\bigbr{A(U\cup V)}$ is injective. By the second and third condition, the composition
\[
\K_1\bigbr{A(X)}\to\K_1\bigbr{A(C)}\to\K_1\bigbr{A(X\setminus (U\cup V))}
\]
is surjective. By the six-term exact sequence, the map $\K_0\bigbr{A(U\cup V)}\to\K_0\bigbr{A(X)}$ is thus injective. The result follows.
\end{proof}

\begin{corollary}
  \label{cor:vanishing_boundary_maps_criterion}
Let~$A$ be a \Cstar{}algebra over~$X$. Then~$A$ has vanishing index\textup{/}ex\-po\-nen\-tial maps if and only if the index\textup{/}exponential map of the extension
\[
 A(U_x\setminus\{x\})\rightarrowtail A(U_x)\twoheadrightarrow A(\{x\})
\]
vanishes for every point $x\in X$.
\end{corollary}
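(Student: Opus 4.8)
The plan is to prove the two implications separately. The implication from vanishing index\textup{/}exponential maps to the per-point condition is immediate: for each $x\in X$ the subset $U_x$ is open, hence locally closed, $\{x\}$ is closed in $U_x$, and $U_x\setminus\{x\}$ is relatively open in $U_x$; thus Lemma~\ref{lem:all_boundary_maps} applied with $Y=U_x$ and $U=U_x\setminus\{x\}$ shows that the boundary map of the extension $A(U_x\setminus\{x\})\into A(U_x)\prto A(\{x\})$ vanishes.

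For the converse I would argue by induction on $\lvert X\rvert$, treating the case of index maps; the exponential case is verbatim the same with $\partial_1$ replaced by $\partial_0$. The base case $\lvert X\rvert=1$ is trivial, as there are no non-trivial distinguished subquotient extensions. For the inductive step, choose a closed point $x\in X$ --- equivalently, a point that is minimal for the specialization order, which exists since $X$ is a finite poset --- and set $U=X\setminus\{x\}$, which is open, and $C=\{x\}$. Proposition~\ref{pro:vanishing_boundaries_and extensions} then reduces the vanishing of the index maps of~$A$ to three conditions: that $A(U)\inOb\Cstarcat(U)$ has vanishing index maps; that $A(\{x\})$ has vanishing index maps, which is automatic over a one-point space; and that the index map $\partial_1\colon\K_1\bigbr{A(\{x\})}\to\K_0\bigbr{A(U)}$ of the extension $A(U)\into A\prto A(\{x\})$ vanishes.

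The key point is that minimality of~$x$ makes both non-trivial conditions follow from the hypothesis. For the connecting map, observe that $U_x\setminus\{x\}=U_x\cap(X\setminus\{x\})$, so the inclusion of the distinguished ideal $A(U_x)$ into~$A$ yields a morphism of extensions from $A(U_x\setminus\{x\})\into A(U_x)\prto A(\{x\})$ to $A(U)\into A\prto A(\{x\})$ that is the identity on the quotient $A(\{x\})$. Naturality of the index map gives $\partial_1=\iota_*\circ\partial_1'$, where $\partial_1'$ is the index map of the first extension and $\iota_*\colon\K_0\bigbr{A(U_x\setminus\{x\})}\to\K_0\bigbr{A(U)}$ is induced by the ideal inclusion; since $\partial_1'=0$ by hypothesis, the third condition holds. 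For the first condition I apply the induction hypothesis to $A(U)$ over the smaller space~$U$. Here the crucial observation is that, since~$x$ is minimal, $x\notin U_y$ for every $y\neq x$: indeed $x\in U_y$ is equivalent to $y\le x$, which forces $y=x$. Consequently the minimal open neighbourhood of~$y$ in~$U$ equals~$U_y$, and the relevant distinguished subquotients of $A(U)$ coincide with those of~$A$, so the per-point hypothesis for~$A(U)$ at each $y\in U$ is literally the per-point hypothesis for~$A$ at~$y$, which holds by assumption. The induction hypothesis then yields that $A(U)$ has vanishing index maps, completing the step.

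I expect the main obstacle to be organising the induction so that the hypothesis is not disturbed when passing to the subspace~$U$: a priori, deleting a point could shrink the neighbourhoods~$U_y$ and thereby alter the extensions appearing in the per-point condition. The observation that deleting a \emph{minimal} point leaves every other $U_y$ untouched is exactly what makes the reduction clean; deleting a maximal point instead would force one to compare the extension over~$U_y$ with that over $U_y\setminus\{x\}$, which is precisely the comparison the induction is designed to avoid. The remaining verifications --- that $\{x\}$ is closed in $U_x$, that the stated morphism of extensions is genuine, and the compatibility of distinguished subquotients under restriction to an open subset --- are routine consequences of the standard formalism of \Cstar{}algebras over~$X$.
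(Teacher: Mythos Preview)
Your proof is correct and follows essentially the same approach as the paper: both arguments reduce the claim via Proposition~\ref{pro:vanishing_boundaries_and extensions} to one-point subquotients, using the naturality of the boundary map under the morphism of extensions from $A(U_x\setminus\{x\})\into A(U_x)\prto A(\{x\})$ to the larger extension to handle the connecting map. The only cosmetic difference is that the paper unwinds the induction by fixing once and for all a filtration $\emptyset=V_0\subsetneq\dotsb\subsetneq V_\ell=X$ with singleton subquotients and building up, whereas you strip off a minimal point and descend; your observation that removing a minimal point leaves all other $U_y$ unchanged is exactly the content of the paper's remark that $U_{x_j}\setminus\{x_j\}\subseteq V_{j-1}$.
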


\begin{proof}
Again, we will only prove the statement for index maps. The condition is clearly necessary. In order to prove sufficiency, we choose a filtration
\[
\emptyset= V_0\subsetneq V_1\subsetneq\dotsb\subsetneq V_\ell=X,
\]
of~$X$ by open subsets~$V_j$ such that $V_j\setminus V_{j-1}=\{x_j\}$ is a singleton for all \(j=1,\dotsc,\ell\). By naturality of the index map, the condition implies that the index map of the extension
\[
 A(V_{j-1})\rightarrowtail A(V_j)\twoheadrightarrow A(\{x_j\})
\]
vanishes for all \(j=1,\dotsc,\ell\). A repeated application of Proposition~\ref{pro:vanishing_boundaries_and extensions} gives the desired result, because a \Cstar{}al\-ge\-bra over the one-point space automatically has vanishing index maps.
\end{proof}

Now we turn to the description of separable purely infinite tight \Cstar{}al\-ge\-bras over~$X$ with vanishing boundary maps.

\begin{proposition}
  \label{pro:characterization_of_KXA_with_vanishing_exponential_maps}
A separable purely infinite tight \Cstar{}algebra over~$X$ has vanishing exponential maps if and only if its underlying \Cstar{}algebra has real rank zero.
\end{proposition}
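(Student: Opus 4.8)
The plan is to read the statement as an instance of the characterisation of real rank zero for purely infinite \Cstar{}algebras due to Pasnicu and \Rordam. By the first remark following the definition, the underlying algebra~$A$ has vanishing exponential maps if and only if it is $\K_0$\nb-liftable in the sense of \cite{Rordam_Pasnicu:PI}, so the proposition amounts to identifying $\K_0$\nb-liftability with real rank zero for the algebras in question. Before addressing the two implications I would record the structural facts that make the argument run: since~$A$ is tight, the homeomorphism $\Prim(A)\to X$ identifies the lattice of closed two-sided ideals of~$A$ with the open subsets of~$X$, so the distinguished ideals exhaust all ideals; moreover every distinguished subquotient is again separable, purely infinite and tight, and each one-point subquotient $A(\{x\})$ is purely infinite and simple, hence of real rank zero by Zhang's theorem. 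These play the role of the base cases.

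For the forward implication, suppose~$A$ has real rank zero. Real rank zero passes to ideals and quotients, so every subquotient $A(V)/A(U)$ has real rank zero and its $\K_0$\nb-group is generated by classes of projections. Since an algebra of real rank zero has the projection lifting property along any quotient map (see \cite{Brown-Pedersen:Non-stable}), each such projection class lifts, so the map $\K_0\bigbr{A(V)}\to\K_0\bigbr{A(V)/A(U)}$ is surjective. By exactness of~\eqref{eq:six-term_sequence} this forces $\partial_0=0$ for every inclusion $U\subseteq V$, that is,~$A$ has vanishing exponential maps. This direction uses neither pure infiniteness nor the finiteness of~$X$ in an essential way.

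For the converse I would argue by induction along an open filtration $\emptyset=V_0\subsetneq V_1\subsetneq\dotsb\subsetneq V_\ell=X$ with $V_j\setminus V_{j-1}=\{x_j\}$, exactly as in the proof of Corollary~\ref{cor:vanishing_boundary_maps_criterion}. The base case $A(V_1)=A(\{x_1\})$ has real rank zero by the previous paragraph. For the inductive step I consider the extension
\[
A(V_{j-1})\rightarrowtail A(V_j)\twoheadrightarrow A(\{x_j\}),
\]
whose quotient has real rank zero, whose ideal has real rank zero by the inductive hypothesis, and whose exponential map vanishes by the standing hypothesis together with Corollary~\ref{cor:vanishing_boundary_maps_criterion} and naturality. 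As~$A(V_j)$ is an ideal in the purely infinite algebra~$A$, it is itself purely infinite, so it remains to invoke the key input from \cite{Rordam_Pasnicu:PI}: a purely infinite extension of a real-rank-zero algebra by a real-rank-zero algebra whose exponential map vanishes again has real rank zero. This yields real rank zero for~$A(V_j)$, and after~$\ell$ steps for $A=A(X)$.

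I expect the inductive step to be the main obstacle, and the point at which pure infiniteness is truly indispensable. Passing real rank zero through an extension is false in general; vanishing of the exponential map only guarantees that projection \emph{classes} from the quotient lift to~$\K_0$, whereas real rank zero demands a genuine abundance of projections in every hereditary subalgebra. It is precisely pure infiniteness---via the lifting and interpolation techniques of \cite{Rordam_Pasnicu:PI}---that upgrades the $\K$\nb-theoretic statement to the geometric one. Consequently the only care required, beyond citing the corresponding theorem of Pasnicu and \Rordam, is to verify that its hypotheses are met at each stage, namely pure infiniteness of~$A(V_j)$ and real rank zero of the successive one-point quotients; both are supplied by tightness and Zhang's theorem.
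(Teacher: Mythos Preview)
Your argument is correct. The paper's proof is a single sentence: it observes that, since~$X$ is finite and hence quasi-compact, the proposition is a direct special case of \cite{Rordam_Pasnicu:PI}*{Theorem~4.2} (which characterises real rank zero for separable purely infinite \Cstar{}algebras as $\K_0$\nb-liftability together with a topological condition on~$\Prim(A)$ that is automatic here).

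Your route is more hands-on: you prove the implication ``real rank zero $\Rightarrow$ vanishing exponential maps'' directly via projection lifting, and you handle the converse by induction along an open filtration with one-point subquotients, using Zhang's theorem for the base case and an extension-permanence result from Pasnicu--\Rordam{} for the inductive step. This is valid, but note that the permanence statement you invoke in the inductive step is itself essentially the content of \cite{Rordam_Pasnicu:PI}*{Theorem~4.2} applied to~$A(V_j)$, so the induction does not really avoid the main citation; the paper's one-line appeal is simply the compressed version of your argument. Your unpacking does have the modest virtue of making the easy direction self-contained. Two small points: the appeal to Corollary~\ref{cor:vanishing_boundary_maps_criterion} in the inductive step is unnecessary, since $V_{j-1}\subseteq V_j$ are both open and the relevant exponential map vanishes by hypothesis; and the standard reference for projection lifting under real rank zero is \cite{Brown-Pedersen:RR0} rather than \cite{Brown-Pedersen:Non-stable}.
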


\begin{proof}
This is a special case of \cite{Rordam_Pasnicu:PI}*{Theorem~4.2} because~$X$ is a quasi-com\-pact space; see also \cite{Rordam_Pasnicu:PI}*{Example~4.8}.
\end{proof}

The following definition has been suggested to us by Mikael \Rordam; it has been considered earlier by Lawrence G.\ Brown~\cite{Brown:intermediate_cancellation}.

\begin{definition}
	\label{def:intermediate_cancellation}
A \Cstar{}algebra~$A$ has \emph{intermediate cancellation} if the following holds: if~$p$ and~$q$ are projections in~$A$ which generate the same ideal and which give rise to the same element in $\K_0(A)$, then $p\sim q$ (that is, the projections~$p$ and~$q$ are Murray-von Neumann equivalent).
\end{definition}

\begin{lemma}
  \label{lem:K0_of_purely_infinite}
Let~$A$ be a separable purely infinite \Cstar{}algebra with finite ideal lattice. Then
\[
 \K_0(A)=\{[p]\mid\textup{$p$ is a full projection in~$A$}\}.
\]
Moreover, if~$p$ and~$q$ are full projections in~$A$ with $[p]=[q]$ in $\K_0(A)$, then $p\sim q$.
\end{lemma}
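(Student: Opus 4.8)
The plan is to lean on two structural features of purely infinite \Cstar{}algebras: every nonzero projection is properly infinite, and comparison of projections is governed by ideal containment, so that whenever the ideal generated by a projection~$e$ is contained in the ideal generated by a projection~$f$ one has $e\precsim f$, i.e.\ $e$ is Murray-von Neumann equivalent to a subprojection of~$f$; both facts hold for projections in matrix amplifications as well. In particular, every projection over~$A$ is subequivalent to any given full projection, since a full projection generates all of~$A$. I would invoke these as standard results of the Kirchberg--\Rordam{} comparison theory.

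For the \emph{moreover} part, let $p,q$ be full projections with $[p]=[q]$. Comparison gives $p\sim q_1$ for some subprojection $q_1\le q$; set $r=q-q_1$, a projection orthogonal to~$q_1$. In $\K_0(A)$ we then have $[q_1]+[r]=[q]=[p]=[q_1]$, so $[r]=0$. Now $q_1\sim p$ is again full and properly infinite, and $[r]=0$ provides a projection~$f$ in some $M_n(A)$ with $r\oplus f\sim f$. Fullness of~$q_1$ yields $f\precsim q_1$, say $f\sim q'\le q_1$, whence $r\oplus q'\sim q'$; adding the orthogonal complement $q_1-q'$ on both sides gives $r+q_1\sim q_1$, and therefore $q=q_1+r\sim q_1\sim p$. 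This argument genuinely uses fullness—to place the auxiliary projection~$f$ inside~$q_1$—which is precisely why for projections generating a \emph{proper} common ideal the analogous absorption fails, and intermediate cancellation is a non-trivial hypothesis elsewhere.

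It remains to prove $\K_0(A)=\{[p]\mid p\text{ a full projection}\}$; only the left-to-right inclusion needs work, and I would establish the stronger statement that \emph{every} class is realised by a full projection, by induction on the finite ideal lattice. The base case of a simple purely infinite algebra is classical: by Zhang's theorem it has real rank zero, every nonzero projection is full, and Cuntz's theorem identifies $\K_0$ with the classes of projections. For the inductive step, pick a minimal nonzero ideal~$I$---simple purely infinite, hence of real rank zero---and consider $I\rightarrowtail A\twoheadrightarrow A/I$. Given $x\in\K_0(A)$, the inductive hypothesis realises its image $\bar x$ by a full projection $\bar p\in A/I$. Crucially, the exponential obstruction $\partial_0(\bar x)\in\K_1(I)$ to lifting $\bar p$ to a projection of~$A$ vanishes, since $\bar x$ lies in the image of $\K_0(A)\to\K_0(A/I)$ and the six-term sequence is exact; as~$I$ has real rank zero, standard lifting results then produce a projection $p'\in A$ over $\bar p$. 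The remaining, and I expect most delicate, point is to make $p'$ full without changing its class: its image is already nonzero in every simple quotient $A/M$ with $I\subseteq M$, whereas for a maximal ideal~$M$ with $I\not\subseteq M$ one has $I\cap M=0$ and $A/M\cong I$ by simplicity of~$I$, so fullness there is secured by adding to~$p'$ an orthogonal full projection $e_I\in I$ of class zero (which exists in the simple real-rank-zero algebra~$I$). This leaves the class unchanged and makes the image nonzero in all maximal quotients simultaneously. The bookkeeping---arranging orthogonality of~$e_I$ and~$p'$ inside~$I$ and tracking the finitely many maximal ideals---is where proper infiniteness and the finiteness of the ideal lattice are used, and is the main obstacle to a clean write-up.
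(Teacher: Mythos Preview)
Your argument for the ``moreover'' clause is essentially the standard proof of \Rordam's Proposition~4.1.4(ii), which is precisely what the paper cites. One small imprecision: from $[r]=0$ in $\K_0(A)$ you only obtain $r\oplus f\sim f$ for a projection~$f$ in matrices over the \emph{unitization}~$\tilde A$, not over~$A$, when~$A$ is non-unital. This is easily repaired---since $q_1$ is full you may pass to the unital corner $q_1Aq_1$, which has the same~$\K_0$---but it should be said.

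Your inductive argument for the first assertion has a genuine gap. Lifting the full projection~$\bar p$ with $[\bar p]=\bar x$ to a projection $p'\in A$ only guarantees that $[p']$ and~$x$ have the same image in $\K_0(A/I)$; the difference $[p']-x$ lies in the image of $\K_0(I)\to\K_0(A)$ and need not vanish. Thus ``making~$p'$ full without changing its class'' does not produce a full projection of class~$x$. The fix is to choose~$e_I$ with class $x-[p']$ rather than zero (realisable in the simple purely infinite ideal~$I$), but you do not say this, and merging the class correction with the orthogonality and fullness bookkeeping you already flag as delicate makes the write-up heavier still.

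The paper sidesteps the induction entirely. It notes that every nonzero projection in~$A$ is properly infinite (Kirchberg--\Rordam), that~$A$ contains a full projection (Pasnicu--\Rordam; this is where the finite-ideal-lattice hypothesis is actually used), and then invokes \Rordam's Proposition~4.1.4 wholesale: once a \Cstar{}algebra has a full properly infinite projection, every $\K_0$\nb-class is represented by one, and two such are equivalent precisely when their classes agree. Your induction is re-deriving a special case of this last fact at considerably greater cost.
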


\begin{proof}
It follows from \cite{Kirchberg_Rordam:Non-simple}*{Theorem 4.16}, that every non-zero projection in~$A$ is properly infinite.  The lemma thus follows from \cite{Rordam:Classification_of_nuclear}*{Proposition~4.1.4} because~$A$ contains a full projection by \cite{Rordam_Pasnicu:PI}*{Proposition~2.7}.
\end{proof}

\begin{proposition}
  \label{pro:characterization_of_KXA_with_vanishing_index_maps}
A separable purely infinite tight \Cstar{}algebra over~$X$ has vanishing index maps if and only if its underlying \Cstar{}algebra has intermediate cancellation.
\end{proposition}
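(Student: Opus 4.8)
The plan is to show that both conditions are equivalent to the assertion that the map $\K_0\bigbr{A(U)}\to\K_0(A)$ induced by the inclusion of the distinguished ideal is injective for every open subset $U\subseteq X$. Note first that each distinguished ideal $A(U)$ is again separable and purely infinite, and that, by tightness, its ideal lattice is isomorphic to the (finite) lattice of open subsets of~$U$; hence Lemma~\ref{lem:K0_of_purely_infinite} applies to every $A(U)$, and this will be the technical engine of the argument.

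First I would treat the equivalence with vanishing index maps. If $A$ has vanishing index maps then, specialising to $U\subseteq V=X$, the index map $\partial_1\colon\K_1\bigbr{A(X\setminus U)}\to\K_0\bigbr{A(U)}$ attached to the extension $A(U)\into A\prto A(X\setminus U)$ vanishes, so exactness of~\eqref{eq:six-term_sequence} forces $\K_0\bigbr{A(U)}\to\K_0(A)$ to be injective. Conversely, assuming injectivity for all open~$U$, I would fix a point $x\in X$ and observe that the map attached to $U=U_x\setminus\{x\}$ factors as $\K_0\bigbr{A(U_x\setminus\{x\})}\to\K_0\bigbr{A(U_x)}\to\K_0(A)$; as this composite is injective, so is its first factor, and by exactness this means the index map of the extension $A(U_x\setminus\{x\})\into A(U_x)\prto A(\{x\})$ vanishes. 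Corollary~\ref{cor:vanishing_boundary_maps_criterion} then gives vanishing index maps for~$A$.

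It remains to match injectivity with intermediate cancellation. The point is that, for full projections $p,q$ in a distinguished ideal $A(U)$, Murray--von Neumann equivalence in~$A$ already takes place in $A(U)$---any implementing partial isometry~$v$ satisfies $v=(vv^*)v\in A(U)$ because $A(U)$ is an ideal---so Lemma~\ref{lem:K0_of_purely_infinite} yields $p\sim q$ if and only if $[p]=[q]$ in $\K_0\bigbr{A(U)}$. Assuming the maps injective, let $p,q$ generate the same ideal with $[p]=[q]$ in $\K_0(A)$; by tightness this ideal is some $A(U)$, the projections are full in $A(U)$, injectivity upgrades the equality $[p]=[q]$ from $\K_0(A)$ to $\K_0\bigbr{A(U)}$, and the previous remark gives $p\sim q$, so that intermediate cancellation holds. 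For the reverse implication, assume intermediate cancellation, fix~$U$ and take $x\in\K_0\bigbr{A(U)}$ mapping to~$0$ in $\K_0(A)$; using Lemma~\ref{lem:K0_of_purely_infinite} I would write $x=[p]$ and $0=[q]$ with $p,q$ full in $A(U)$, observe that both classes vanish in $\K_0(A)$, whence $[p]=[q]$ there, and deduce $p\sim q$ and thus $x=[p]=[q]=0$. The step I expect to be the main obstacle is precisely this last implication: one has to manufacture the auxiliary full projection~$q$ of trivial class and then verify that the two hypotheses of Definition~\ref{def:intermediate_cancellation}---equality of the classes in $\K_0(A)$ \emph{and} generation of a common ideal---are genuinely in force, the latter because $p$ and~$q$ are full in the same $A(U)$; everything else reduces to bookkeeping with the six-term sequence and Lemma~\ref{lem:K0_of_purely_infinite}.
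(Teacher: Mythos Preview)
Your argument is correct and is essentially a fleshed-out version of the paper's two-line proof, which simply cites that every ideal is purely infinite and then invokes Lemma~\ref{lem:K0_of_purely_infinite} ``for every ideal of~$A$''. The intermediate condition you isolate---injectivity of $\K_0\bigbr{A(U)}\to\K_0(A)$ for all open~$U$---is exactly what that invocation amounts to once one unpacks the six-term sequence; the only cosmetic difference is that for the implication ``injectivity $\Rightarrow$ vanishing index maps'' you route through Corollary~\ref{cor:vanishing_boundary_maps_criterion}, whereas one could equally note directly that the composite $\K_0\bigbr{A(U)}\to\K_0\bigbr{A(V)}\to\K_0(A)$ being injective forces the first map to be injective for all $U\subseteq V$.
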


\begin{proof}
By \cite{Kirchberg_Rordam:Non-simple}*{Proposition~4.3}, every ideal in~$A$ is purely infinite. The proposition follows from applying Lemma~\ref{lem:K0_of_purely_infinite} to every ideal of~$A$.
\end{proof}

\begin{corollary}
	\label{cor:intermediate_cancellation_and_extensions}
Let $I\rightarrowtail A\twoheadrightarrow B$ be an extension of \Cstar{}algebras. Assume that~$A$ is  separable, purely infinite and has finite ideal lattice. Then~$A$ has intermediate cancellation if and only if the following hold:
\begin{itemize}
\item $I$ has intermediate cancellation,
\item $B$ has intermediate cancellation,
\item the index map $\K_1(B)\to\K_0(I)$ vanishes.
\end{itemize}
\end{corollary}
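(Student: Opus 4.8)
The plan is to recognise this corollary as the \Cstar{}algebraic shadow of Proposition~\ref{pro:vanishing_boundaries_and extensions}, transported across the dictionary supplied by Proposition~\ref{pro:characterization_of_KXA_with_vanishing_index_maps}. Concretely, I would set $X\defeq\Prim(A)$. Since~$A$ has a finite ideal lattice, the lattice of open subsets of~$X$ is finite, so~$X$ is a finite $T_0$\nb-space, and~$A$ is tautologically tight over~$X$, the structure map being the identity homeomorphism. Under the lattice isomorphism between ideals of~$A$ and open subsets of~$X$, the ideal~$I$ corresponds to an open subset $U\subseteq X$, so that $I=A(U)$ and $B=A(C)$ with $C=X\setminus U$ the closed complement.

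Next I would verify that Proposition~\ref{pro:characterization_of_KXA_with_vanishing_index_maps} applies not only to~$A$ over~$X$ but also to the distinguished subquotients $A(U)=I$, viewed as a \Cstar{}algebra over~$U$, and $A(C)=B$, viewed as a \Cstar{}algebra over~$C$. Separability passes to ideals and quotients. Pure infiniteness of~$I$ follows from~\cite{Kirchberg_Rordam:Non-simple}*{Proposition~4.3} as in the proof of Proposition~\ref{pro:characterization_of_KXA_with_vanishing_index_maps}, and pure infiniteness is likewise inherited by the quotient~$B$. Finally, $A(U)$ is tight over~$U$ and $A(C)$ is tight over~$C$ because $\Prim\bigbr{A(U)}\cong U$ and $\Prim\bigbr{A(C)}\cong C$ as topological spaces; as open and closed subsets of the finite $T_0$\nb-space~$X$, both~$U$ and~$C$ are again finite $T_0$\nb-spaces, so these subquotients have finite ideal lattices.

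With these verifications in hand the equivalence is a formal substitution. Proposition~\ref{pro:characterization_of_KXA_with_vanishing_index_maps} translates \emph{$A$ has intermediate cancellation} into \emph{$A$ has vanishing index maps as a \Cstar{}algebra over~$X$}, and likewise converts the intermediate cancellation of~$I$ and of~$B$ into the vanishing of the index maps of $A(U)\inOb\Cstarcat(U)$ and of $A(C)\inOb\Cstarcat(C)$. Proposition~\ref{pro:vanishing_boundaries_and extensions}, applied to the open subset~$U$ with closed complement~$C$, states that~$A$ has vanishing index maps if and only if $A(U)$ and $A(C)$ do and the index map $\K_1\bigbr{A(C)}\to\K_0\bigbr{A(U)}$ vanishes. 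Substituting $A(U)=I$ and $A(C)=B$, this displayed index map is exactly $\K_1(B)\to\K_0(I)$, so the three conditions of Proposition~\ref{pro:vanishing_boundaries_and extensions} match the three listed conditions verbatim, completing the proof.

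I expect the only genuine work to lie in the second paragraph: confirming that the standing hypotheses \emph{separable, purely infinite, tight} descend to~$I$ and to~$B$, so that Proposition~\ref{pro:characterization_of_KXA_with_vanishing_index_maps} is legitimately applicable to the subquotients. The permanence of pure infiniteness under passage to ideals and to quotients is the essential input here, and it is furnished by the Kirchberg--R\o rdam theory already invoked earlier; once it is secured, the remainder is a purely formal transcription of Proposition~\ref{pro:vanishing_boundaries_and extensions}.
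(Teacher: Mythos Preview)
Your proposal is correct and follows exactly the route the paper intends: the paper's proof is the one-liner ``Combine Propositions~\ref{pro:vanishing_boundaries_and extensions} and~\ref{pro:characterization_of_KXA_with_vanishing_index_maps}'', and you have simply unpacked that combination, including the necessary verification that the hypotheses of Proposition~\ref{pro:characterization_of_KXA_with_vanishing_index_maps} descend to~$I$ and~$B$.
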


\begin{proof}
Combine Propositions \ref{pro:vanishing_boundaries_and extensions} and \ref{pro:characterization_of_KXA_with_vanishing_index_maps}.
\end{proof}

\begin{remark}
  \label{rem:rrzero}
The analogue of Corollary \ref{cor:intermediate_cancellation_and_extensions} with real rank zero replacing intermediate cancellation and the exponential map $\K_0(B)\to\K_1(I)$ replacing the index map $\K_1(B)\to\K_0(I)$ is well-known and holds in much greater generality; see~\cites{Brown-Pedersen:RR0,Lin-Rordam:Extensions_of_inductive_limits}.
\end{remark}

\section{Representations and cosheaves}
  \label{sec:reps_and_cosheaves}
  
In this section, we introduce two $\K$-theoretical invariants for \Cstar{}algebras over~$X$ that are well-adapted to algebras with vanishing boundary maps. First we define their target categories.

We associate the following two partially ordered sets to~$X$:
\begin{itemize}
 \item the set~$X$ itself, equipped with the specialization preorder;
 \item the collection $\Open(X)$ of open subsets of~$X$, partially ordered by inclusion.
\end{itemize}
The map $X^\op\to\Open(X),\; x\mapsto U_x$ is an embedding of partially ordered sets. Here~$X^\op$ denotes the set~$X$ with reversed partial ordering. For the following definition, recall that every partially ordered set can be viewed as a category such that $\Hom(x,y)$ has one element, denoted by~$i_x^y$, if $x\leq y$ and zero elements otherwise.

\begin{definition}
Let $\GCAb$ be the category of countable $\Z/2$-graded Abelian groups. A \emph{representation} of~$X$ is a covariant functor $X^\op\to\GCAb$. A \emph{precosheaf} on $\Open(X)$ is a covariant functor $\Open(X)\to\GCAb$. A precosheaf $M\colon\Open(X)\to\GCAb$ is a \emph{cosheaf} if, for every $U\in\Open(X)$ and every open covering $\{U_j\}_{j\in J}$ of~$U$, the sequence
\begin{multline*}
  \label{eq:cosheaf_condition_on_topology}
\bigoplus_{j,k\in J} M(U_j\cap U_k)
\xrightarrow{\left(M(i_{U_j\cap U_k}^{U_j})-M(i_{U_j\cap U_k}^{U_k})\right)}
\bigoplus_{j\in J} M(U_j)
\xrightarrow{\left(M(i_{U_j}^U)\right)}
M(U) \longrightarrow 0
\end{multline*}
is exact. Letting morphisms be natural transformations of functors, we define the category $\Reps(X)$ of representations of~$X$, the category $\PreCoSheaf\bigbr{\Open(X)}$ of precosheaves over~$\Open(X)$ and the category $\CoSheaf\bigbr{\Open(X)}$ of cosheaves over~$\Open(X)$.
\end{definition}

The notion of cosheaf was introduced by Bredon~\cite{Bredon:Cosheaves}. Just like sheaves, cosheaves are determined by their behaviour on a basis. This is made precise in the following definition and lemma.

\begin{definition}
Let $\Res\colon\CoSheaf\bigbr{\Open(X)}\to\Reps(X)$ be the \emph{restriction} functor given by
\[
\Res(M)(x)=M(U_x),\quad\Res(M)(i_x^y)=M\left(i_{U_x}^{U_y}\right).
\]
Let $\Colim\colon\Reps(X)\to\CoSheaf\bigbr{\Open(X)}$ be the functor that extends a representation~$M$ of~$X$ to a cosheaf on~$\Open(X)$ in a way such that $\bigbr{\Colim(M)}(U)$ is given by the cokernel of the map
\begin{equation*}
  \label{eq:def_of_colim}
\bigoplus_{x,y\in U}\bigoplus_{z\in U_x\cap U_y} M(z)
\xrightarrow{\bigbr{M(i_z^x)-M(i_z^y)}}
\bigoplus_{x\in U} M(x)
\end{equation*}and $\Colim(M)(i_U^V)$ is induced by the obvious inclusions $\bigoplus_{x\in U} M(x)\subseteq\bigoplus_{x\in V} M(x)$ and $\bigoplus_{x,y\in U}\bigoplus_{z\in U_x\cap U_y} M(z)\subseteq\bigoplus_{x,y\in V}\bigoplus_{z\in U_x\cap U_y} M(z)$. We call $\Colim(M)$ the \emph{associated cosheaf} of the representation~$M$.
\end{definition}

\begin{lemma}
	\label{lem:res_and_colim_inverse}
The functor $\Colim$ indeed takes values in cosheaves on~$\Open(X)$.
The functors $\Res$ and $\Colim$ are mutually inverse equivalences of categories.
\end{lemma}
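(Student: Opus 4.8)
The plan is to recognise~$\Colim$ as a left Kan extension and then to verify the cosheaf condition by a \v{C}ech-type computation that exploits the poset structure of~$X$. First I would note that, for an open set~$U$, the group $\bigbr{\Colim(M)}(U)$ is the colimit $\varinjlim_{z\in U}M(z)$ of~$M$ restricted to the sub-poset of~$X^\op$ consisting of the points lying in~$U$, ordered so that $z\leq w$ means $U_z\subseteq U_w$, that is, $z\in U_w$. Indeed, upon setting $y=z$ the defining relations of the cokernel specialise to the standard colimit relations $\iota_x\bigbr{M(i_z^x)a}=\iota_z(a)$, and conversely every relation $\iota_x\bigbr{M(i_z^x)a}-\iota_y\bigbr{M(i_z^y)a}$ is a difference of two of these; hence the two subgroups coincide. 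Equivalently, $\Colim$ is the left Kan extension of~$M$ along the full embedding $X^\op\hookrightarrow\Open(X)$, $x\mapsto U_x$. From this the isomorphism $\Res\circ\Colim\cong\mathrm{id}$ is immediate: the points of~$U_x$ are precisely the~$z$ with $z\leq x$ in~$X^\op$, so the relevant sub-poset has~$x$ as greatest element, and a colimit over a poset with a greatest element is the value there; thus $\bigbr{\Colim(M)}(U_x)=M(x)$, naturally in~$x$ and~$M$.

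The main step — and the one I expect to be the principal obstacle — is to prove that $\Colim(M)$ satisfies the cosheaf condition for an arbitrary open cover $\{U_j\}_{j\in J}$ of an open set~$U$. The geometric fact that makes this work is that the point set of each~$U_j$ is a \emph{down-set} of $(U,\leq^\op)$: if $z\in U_j$ and $w\leq z$, then $U_w\subseteq U_z\subseteq U_j$, so $w\in U_j$. Surjectivity onto $\bigbr{\Colim(M)}(U)$ is clear, since each generator $\iota_z(a)$ with $z\in U$ is already visible in some~$U_j$.

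For exactness in the middle I would argue as follows. Let $(\alpha_j)_j$ lie in the kernel and lift each~$\alpha_j$ to a chain $\tilde\alpha_j\in\bigoplus_{z\in U_j}M(z)$. Then $\sum_j\tilde\alpha_j$ maps to zero in $\varinjlim_{z\in U}M(z)$, so it is a finite sum of elementary relations $\iota_p(c)-\iota_q\bigbr{M(i_p^q)c}$ with $p\leq q$ in~$U$. Since such a~$q$ lies in some~$U_j$ and $U_j$ is a down-set, the \emph{whole} relation lies in~$U_j$; assigning each relation to one such~$j$ and subtracting, I obtain chains $\beta_j\in\bigoplus_{z\in U_j}M(z)$ with $[\beta_j]=\alpha_j$ and $\sum_j\beta_j=0$ already at chain level. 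The last equality says exactly that $(\beta_j)_j$ lies in the kernel of the point-wise fold map $\bigoplus_j\bigoplus_{z\in U_j}M(z)\to\bigoplus_{z\in U}M(z)$; and the corresponding augmented \v{C}ech complex is exact, because after decomposing over each point~$z\in U$ it is $M(z)$ tensored with the augmented simplicial chain complex of the non-empty simplex on the index set $\{j:z\in U_j\}$, which is contractible. Hence $(\beta_j)_j$ is a \v{C}ech boundary of some $\eta\in\bigoplus_{j,k}\bigoplus_{z\in U_j\cap U_k}M(z)$, and passing to colimit classes exhibits $(\alpha_j)_j$ as the image of $\bigbr{[\eta_{jk}]}$ under the differential in the cosheaf sequence. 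This proves that $\Colim$ takes values in $\CoSheaf\bigbr{\Open(X)}$.

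It remains to produce the isomorphism $\Colim\circ\Res\cong\mathrm{id}$ on cosheaves, after which $\Res$ and $\Colim$ are mutually inverse equivalences. Given a cosheaf~$N$, both $\bigbr{\Colim(\Res N)}(U)$ and~$N(U)$ are quotients of $\bigoplus_{x\in U}N(U_x)$: the former by the subgroup generated by the differences $N(i_{U_z}^{U_x})b-N(i_{U_z}^{U_y})b$ with $z\in U_x\cap U_y$, the latter — by the cosheaf condition of~$N$ for the cover $\{U_x\}_{x\in U}$ — by the image of $\bigoplus_{x,y}N(U_x\cap U_y)$. To see these subgroups agree I would invoke the cosheaf condition of~$N$ once more, now for the cover $\{U_z\}_{z\in U_x\cap U_y}$ of~$U_x\cap U_y$: it makes the map $\bigoplus_{z\in U_x\cap U_y}N(U_z)\to N(U_x\cap U_y)$ surjective, and combined with functoriality of the structure maps this identifies the two subgroups. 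The resulting natural isomorphism $\Colim(\Res N)\cong N$, together with $\Res\circ\Colim\cong\mathrm{id}$, completes the proof.
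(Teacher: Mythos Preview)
Your argument is correct. The identification of $\Colim(M)(U)$ with the directed colimit $\varinjlim_{z\in U}M(z)$ is sound, the down-set property of each~$U_j$ is exactly what is needed to localise the colimit relations, and the pointwise \v{C}ech argument for exactness in the middle goes through because for each fixed~$z$ the relevant complex is (an ordered variant of) the augmented chain complex of a non-empty simplex. Your verification of $\Colim\circ\Res\cong\mathrm{id}$ via two applications of the cosheaf condition for~$N$ is also clean.

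The paper's route is rather different and much shorter: it simply invokes the dual of the standard ``sheaves on a basis'' lemma (citing the Stacks Project), together with the observation that the basis $\{U_x\mid x\in X\}$ has the special feature that any covering of a basic open~$U_x$ by basic opens must already contain~$U_x$ itself, so that every precosheaf on this basis is automatically a cosheaf on the basis. This sidesteps the explicit \v{C}ech computation entirely. What your approach buys is self-containment: you never leave the cosheaf side or appeal to external references, and you make transparent the combinatorial reason the argument works---namely that open subsets of~$X$ are down-sets of the poset~$X^\op$. What the paper's approach buys is brevity and modularity, at the cost of pointing the reader elsewhere for the actual work.
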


\begin{proof}
The corresponding statements for sheaves are well-known: see, for instance, \cite{stacks-project}*{Lemmas~\href{http://stacks.math.columbia.edu/tag/009N}{009N} and~\href{http://stacks.math.columbia.edu/tag/009O}{009O}}. Our dual version for cosheaves is a straight-forward analogue. Notice that $\{U_x\mid x\in X\}$ is a basis for the topology on~$X$ with the special property that every covering of an open set in it must contain this open set. Hence every precosheaf on this basis is already a cosheaf.
\end{proof}

\begin{definition}
The \emph{integral incidence algebra} $\Z X$ of~$X$ is the free Abelian group generated by elements \(i_x^y\) for all pairs \((x,y)\) with \(y\leq x\) equipped with the unique bilinear multiplication such that \(i_z^w i_x^y\) equals \(i_x^w\) if $y=z$ and otherwise is zero. By \(\GCMod{\Z X}\), we denote the category of countable $\Z/2$-graded left-modules over~$\Z X$.

The categories $\Reps(X)$ and \(\GCMod{\Z X}\) are canonically equivalent; we will identify them tacitly. For every point $x\in X$, we have a projective module $P^x\defeq\Z X\cdot i_x^x$ in \(\GCMod{\Z X}\) associated to the idempotent element~$i_x^x$. Its entries are given by
\[
(P^x)(y)=
\begin{cases}
\Z[0]\cdot i_x^y & \textup{if $y\leq x$} \\
0 & \textup{otherwise}
\end{cases}
\]
and the map $(P^x)(i_y^z)$ for $y\geq z$ is an isomorphism if $x\geq y$ and zero otherwise.
\end{definition}

\begin{definition}[\cite{Bredon:Cosheaves}*{\S 1}]
A cosheaf on~$\Open(X)$ is called \emph{flabby} if all its structure maps are injective.
\end{definition}

The following is our key-lemma towards the universal coefficient theorem.

\begin{lemma}
	\label{lem:proj_dim_1}
Let~$M$ be a representation of~$X$ such that the associated cosheaf $\Colim(M)$ on~$\Open(X)$ is flabby. Then~$M$ has a projective resolution of length~$1$.
\end{lemma}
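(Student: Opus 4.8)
The plan is to use the exact equivalence $\Colim\colon\Reps(X)\to\CoSheaf\bigbr{\Open(X)}$ of Lemma~\ref{lem:res_and_colim_inverse}: producing a length-one resolution of $M$ by projectives in $\Reps(X)$ is the same as resolving the flabby cosheaf $\Colim(M)$ by cosheaves of the form $\Colim(P)$ with $P$ projective, so it suffices to show $M$ has projective dimension at most one. Two inputs are available throughout. First, each $M(x)=\Colim(M)(U_x)$ is a countable $\Z/2$-graded group, and since $\Z$ is a principal ideal domain every such group has a length-one free resolution. Second, for each $x$ the flabbiness hypothesis applied to $U_x\setminus\{x\}\subseteq U_x$ says that the structure map $\Colim(M)(U_x\setminus\{x\})\to M(x)$ is injective; I write $R(x)$ for its image (the sum of the images of the structure maps $M(y)\to M(x)$ with $y\in U_x\setminus\{x\}$) and $T(x)=M(x)/R(x)$ for the costalk. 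I would argue by induction on $\lvert X\rvert$. The base case $\lvert X\rvert=1$ is exactly the existence of a length-one free resolution of a countable $\Z/2$-graded group.

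For the inductive step I would pick a minimal point $x_0$ of the specialization order, so that $\{x_0\}$ is closed, $U\defeq X\setminus\{x_0\}$ is open, and the inclusion $j\colon U\hookrightarrow X$ induces the exact restriction functor $M\mapsto M|_U$ with left adjoint $j_!$. Restricting a flabby cosheaf to the open subspace $U$ again produces a flabby cosheaf, so $M|_U$ is flabby and the induction hypothesis yields a length-one projective resolution $0\to Q_1\to Q_0\to M|_U\to 0$ in $\Reps(U)$. Because $x_0$ is minimal, $P^{x_0}$ is concentrated at $x_0$; hence the representation $S$ supported at $x_0$ with value $T(x_0)$ equals $P^{x_0}\otimes T(x_0)$, and a free resolution $0\to K_{x_0}\to F_{x_0}\to T(x_0)\to 0$ over $\Z$ gives a length-one projective resolution $0\to P^{x_0}\otimes K_{x_0}\to P^{x_0}\otimes F_{x_0}\to S\to 0$. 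The counit $j_!(M|_U)\to M$ is an isomorphism over $U$ and at $x_0$ is precisely the flabby injection $R(x_0)\hookrightarrow M(x_0)$, so there is a short exact sequence $0\to j_!(M|_U)\to M\to S\to 0$ in $\Reps(X)$; and since $j_!$ is left adjoint to the exact functor $M\mapsto M|_U$, it preserves projectivity, so $j_!Q_0$ and $j_!Q_1$ are projective.

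It remains to turn the resolution of $M|_U$ into one of $j_!(M|_U)$ and to splice. Applying $j_!$ to $0\to Q_1\to Q_0\to M|_U\to 0$ and invoking the long exact sequence of its left derived functors yields $0\to j_!Q_1\to j_!Q_0\to j_!(M|_U)\to 0$, provided the derived term $L_1j_!(M|_U)$ vanishes; this is the only place where the full strength of flabbiness (rather than a single injectivity statement) enters. The value of $L_\bullet j_!$ at $x_0$ is the higher colimit of $M|_U$ over the open set $U_{x_0}\setminus\{x_0\}$, and I would establish that a flabby cosheaf is acyclic for such colimits, dual to the classical fact that flasque sheaves are acyclic for global sections. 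Granting this, both ends of $0\to j_!(M|_U)\to M\to S\to 0$ carry length-one projective resolutions, and the horseshoe lemma assembles them into
\[
0\to j_!Q_1\oplus (P^{x_0}\otimes K_{x_0})\to j_!Q_0\oplus(P^{x_0}\otimes F_{x_0})\to M\to 0,
\]
a projective resolution of~$M$ of length one.

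The hard part will be the acyclicity of flabby cosheaves for colimits (equivalently $L_{>0}j_!=0$ on flabby objects), which is exactly what keeps the spliced resolution at length one rather than letting its length grow with the longest chain in~$X$. Once that is in hand—either by the argument dual to Godement's, or, since $X$ is finite, by a direct computation of the higher colimits over $U_{x_0}\setminus\{x_0\}$—the remaining ingredients (freeness over the PID~$\Z$, preservation of projectives by $j_!$, and the horseshoe lemma) are routine.
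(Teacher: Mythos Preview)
Your proof is correct, and it is organised differently from the paper's though the two are closely related. The paper constructs an explicit filtration $0=P_{V_0}M\subseteq\cdots\subseteq P_{V_\ell}M=M$ by setting $(P_V M)(x)=\Colim(M)(V\cap U_x)$ for a linearisation $\emptyset=V_0\subsetneq\cdots\subsetneq V_\ell=X$; flabbiness makes this a filtration by subobjects, a short pushout computation identifies each subquotient with $P^{x_j}\otimes G^j$ for some countable $\Z/2$\nb-graded group $G^j$, and the Horseshoe Lemma finishes. Your approach instead peels off one closed point at a time via the adjunction $j_!\dashv j^*$, producing the extension $0\to j_!(M|_U)\to M\to S\to 0$ and recursing on~$M|_U$.

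If one unwinds your induction, it yields exactly the paper's filtration (your $S$ at step~$j$ is the paper's~$Q^j$), but you package the flabbiness hypothesis differently: rather than verifying directly that the filtration layers are subobjects and computing the subquotients, you defer everything to the acyclicity statement $L_1 j_!(M|_U)=0$. That statement is true, and your ``dual Godement'' idea can be made to work---one inducts on the size of the open set, using the Mayer--Vietoris sequence for the cosheaves $\Colim(K)$, $\Colim(P)$, $\Colim(N)$ together with flabbiness of the latter two and a diagram chase---but carrying this out is itself an induction of the same shape and weight as the paper's direct argument. So your route buys a more categorical formulation (Kan extensions, derived functors) at the price of a deferred lemma of equal substance; the paper's explicit filtration is more self-contained. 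Neither approach is genuinely shorter: they repackage the same inductive content.
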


\begin{proof}
As before, we may choose a filtration
\[
\emptyset= V_0\subsetneq V_1\subsetneq\dotsb\subsetneq V_\ell=X,
\]
of~$X$ by open subsets~$V_j$ such that $V_j\setminus V_{j-1}=\{x_j\}$ is a singleton for all \(j=1,\dotsc,\ell\). For $V\in\Open(X)$ we define a representation $P_V M$ of~$X$ by
\[
 (P_V M)(x) = \Colim(M)(V\cap U_x).
\]
Since $\Colim(M)$ is flabby, we obtain a filtration
\[
0= P_{V_0} M\subseteq P_{V_1} M\subseteq\dotsb\subseteq P_{V_\ell} M=M.
\]
It follows from the so-called Horseshoe Lemma that an extension of modules with projective resolutions of length~$1$ also has a projective resolution of length~$1$. Hence it remains to show that the subquotients $Q^j\defeq P_{V_j} M/P_{V_{j-1}} M$ in our filtration have resolutions of length~$1$.

Let us describe the modules~$Q^j$ explicitly. If $x_j\not\in U_x$, then we have
\[
(P_{V_j} M)(x)=\Colim(M)(V_j\cap U_x)=\Colim(M)(V_{j-1}\cap U_x)=(P_{V_{j-1}} M)(x),
\]
so that $Q^j(x)=0$. Now we assume $x_j\in U_x$. We fix~$y\in X$ with $x\in U_y$ and abbreviate $C\defeq\Colim(M)$. Since~$C$ is a cosheaf, we have a pushout diagram
\[
\xymatrix{
 C(V_{j-1}\cap U_x)\ar[r]\ar[d] & C(V_{j}\cap U_x)\ar[d] \\
 C(V_{j-1}\cap U_y)\ar[r] & C(V_{j}\cap U_y).
}
\]
Since pushouts preserve cokernels, we obtain that the map $Q^j(x)\to Q^j(y)$ is an isomorphism. In conclusion, we may identify $Q^j\cong P^{x_j}\otimes G^j$, where~$G^j$ is some countable $\Z/2$-graded Abelian group. A projective resolution of length~$1$ for~$Q^j$ can thus be obtained by tensoring the projective module~$P^{x_j}$ with a resolution of~$G^j$.
\end{proof}

Now we turn to the definition of our $\K$-theoretical invariants.

\begin{definition}
We define a functor $\XK\colon\KKcat(X)\to\Reps(X)\cong\GCMod{\Z X}$ as follows: set
\[
\XK(A)(x)=\K_*\bigbr{A(U_x)}
\]
and let $\XK(A)(i_x^y)$ be the map induced by the ideal inclusion $A(U_x)\hookrightarrow A(U_y)$.

Similarly, we define $\OK\colon\KKcat(X)\to\PreCoSheaf\bigbr{\Open(X)}$ by $\OK(A)(U)=\K_*\bigbr{A(U)}$ and let the structure maps be the homomorphisms induced by the ideal inclusions.
\end{definition}

We have an identity of functors $\Res\circ\OK=\XK$.

\begin{lemma}
  \label{lem:flabby_cosheaf}
A \Cstar{}algebra~$A$ over~$X$ has vanishing boundary maps if and only if $\OK(A)$ is a flabby cosheaf.
\end{lemma}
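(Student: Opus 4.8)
The plan is to reduce everything to the observation that, for open subsets $U\subseteq V\subseteq X$, the six-term exact sequence \eqref{eq:six-term_sequence} shows that the index map $\partial_1$ vanishes if and only if the structure map $\K_0\bigbr{A(U)}\to\K_0\bigbr{A(V)}$ is injective, and that the exponential map $\partial_0$ vanishes if and only if $\K_1\bigbr{A(U)}\to\K_1\bigbr{A(V)}$ is injective. Hence $A$ has vanishing boundary maps if and only if every structure map of the precosheaf $\OK(A)$ is injective. This immediately settles the ``if'' direction: a flabby cosheaf has injective structure maps by definition, so $\OK(A)$ flabby forces vanishing boundary maps. It also reduces the ``only if'' direction to a single point: assuming vanishing boundary maps, the structure maps are injective (which is exactly flabbiness), so it remains to prove that $\OK(A)$ satisfies the cosheaf condition.

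For the cosheaf condition I would first reduce to two-element covers. Since $X$ is finite every cover is finite, and the standard \v{C}ech refinement argument for (co)sheaves lets one build the cosheaf sequence for a cover $\{U_1,\dotsc,U_n\}$ of $U$ out of those for the two-element cover $\{U_1,\,U_2\cup\dotsb\cup U_n\}$ together with the cosheaf sequences on $U_2\cup\dotsb\cup U_n$ and on $U_1\cap(U_2\cup\dotsb\cup U_n)=\bigcup_{j\ge2}(U_1\cap U_j)$; these are covers with fewer members, so an induction on the number of members applies. It therefore suffices to prove exactness of
\[
\K_*\bigbr{A(U_1\cap U_2)}\to\K_*\bigbr{A(U_1)}\oplus\K_*\bigbr{A(U_2)}\to\K_*\bigbr{A(U_1\cup U_2)}\to0
\]
for two open sets $U_1,U_2$, where the first map is the difference and the second the sum of the maps induced by the ideal inclusions.

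The core step is this two-element case, and here I would use the morphism of extensions with vertical inclusions
\[
\bigl(A(U_1\cap U_2)\rightarrowtail A(U_1)\twoheadrightarrow A(U_1\setminus U_2)\bigr)\longrightarrow\bigl(A(U_2)\rightarrowtail A(U_1\cup U_2)\twoheadrightarrow A\bigbr{(U_1\cup U_2)\setminus U_2}\bigr),
\]
observing that the two quotients agree, since $U_1\setminus U_2=(U_1\cup U_2)\setminus U_2$, so the induced map on quotients is the identity. The ideal $A(U_1\cap U_2)$ corresponds to a relatively open subset of the locally closed set $U_1$, so Lemma~\ref{lem:all_boundary_maps} guarantees that the index and exponential maps of the top extension vanish; by naturality the same holds for the bottom extension. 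Both rows of six-term sequences thus break into short exact sequences, and feeding these into a diagram chase---using surjectivity of $\K_*\bigbr{A(U_1)}\to\K_*\bigbr{A(U_1\setminus U_2)}$ and injectivity of $\K_*\bigbr{A(U_2)}\to\K_*\bigbr{A(U_1\cup U_2)}$---yields exactness of the displayed sequence. Concretely this is the Mayer--Vietoris sequence for the two ideals $A(U_1)$ and $A(U_2)$, whose connecting maps are forced to vanish.

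I expect the main obstacle to be precisely this identification: recognizing that the obstruction to cosheaf exactness for two ideals is governed by the boundary maps of the relative extension $A(U_1\cap U_2)\rightarrowtail A(U_1)\twoheadrightarrow A(U_1\setminus U_2)$, and that Lemma~\ref{lem:all_boundary_maps} is exactly what annihilates them. The reduction to two-element covers is routine homological bookkeeping. One could alternatively phrase the whole argument through Lemma~\ref{lem:res_and_colim_inverse}, by showing that the canonical comparison map $\Colim\bigbr{\XK(A)}\to\OK(A)$ is an isomorphism and invoking that $\Colim$ lands in cosheaves; but the essential content is the same two-ideal computation.
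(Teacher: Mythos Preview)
Your proof is correct and follows essentially the same approach as the paper: both directions hinge on the six-term sequence giving the equivalence between vanishing boundary maps and injectivity of the structure maps (flabbiness), and the cosheaf condition is verified by reducing to two-element covers and invoking Mayer--Vietoris with vanishing connecting maps. Your write-up is more explicit than the paper's---you spell out the induction on the size of the cover and derive the two-set case from a morphism of extensions rather than simply citing Mayer--Vietoris---but the underlying argument is identical.
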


\begin{proof}
Suppose that~$A$ has vanishing boundary maps. By an inductive argument as in \cite{Bredon:Cosheaves}*{Proposition~1.3}, it suffices to verify the cosheaf condition for all coverings consisting of two open sets. This case reduces to the Mayer-Vietoris sequence. The six-term exact sequence~\eqref{eq:six-term_sequence} shows that $\OK(A)$ is flabby.

Conversely, if $\OK(A)$ is a flabby cosheaf, the six-term exact sequence~\eqref{eq:six-term_sequence} shows that~$A$ has vanishing boundary maps.
\end{proof}

It follows from Lemma~\ref{lem:res_and_colim_inverse} that, on the full subcategory of \Cstar{}algebras over~$X$ with vanishing boundary maps, we have a natural isomorphism $\Colim\circ\XK\cong\OK$.

\begin{remark}
Instead of working with $\K$-theory groups of distinguished ideals, we could define similar invariants in terms of $\K$-theory groups of distinguished quotients. This would not make a difference for the universal coefficient theorem in the next section. However, our choice of definition interacts more nicely with the invariant~$\FK_\Catgunnar$ that we will use in~\S\ref{sec:range_on_graph_algebras}.
\end{remark}

For reference in future work, we record the following lemma.

\begin{lemma}
Let $A$ be a \Cstar{}algebra over~$X$ with vanishing boundary maps such that the Abelian group $\K_*\bigl(A(Y)\bigr)$ is free for every locally closed subset $Y\subseteq X$. Then $\XK(A)$ is projective.
\end{lemma}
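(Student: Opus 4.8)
The plan is to re-examine the proof of Lemma~\ref{lem:proj_dim_1} and track the freeness hypothesis through it. First I would check that the lemma applies: since~$A$ has vanishing boundary maps, Lemma~\ref{lem:flabby_cosheaf} says that $\OK(A)$ is a flabby cosheaf, and the natural isomorphism $\Colim\circ\XK\cong\OK$ recorded after Lemma~\ref{lem:flabby_cosheaf} identifies $\Colim\bigbr{\XK(A)}$ with~$\OK(A)$. Hence $\Colim\bigbr{\XK(A)}$ is flabby, and Lemma~\ref{lem:proj_dim_1} applies with $M=\XK(A)$.

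Next I would recall the conclusion of that proof in sharper form. It produces a filtration $0=P_{V_0}M\subseteq\dotsb\subseteq P_{V_\ell}M=M$ whose subquotients satisfy $Q^j\cong P^{x_j}\otimes G^j$ for certain countable $\Z/2$-graded Abelian groups~$G^j$. The key additional observation is that, under our hypothesis, each~$G^j$ is \emph{free}. Granting this, $Q^j$ is a direct sum of shifted copies of the projective module~$P^{x_j}$, hence projective; and since an extension of projective modules splits, a straightforward induction along the filtration shows that $M=\XK(A)$ is itself projective.

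It therefore remains to identify the groups~$G^j$. Since $(P^{x_j})(x_j)=\Z[0]$, we have $G^j=Q^j(x_j)=\Colim(M)(V_j\cap U_{x_j})\big/\Colim(M)(V_{j-1}\cap U_{x_j})$. Writing $W=V_{j-1}\cap U_{x_j}$ and $W'=V_j\cap U_{x_j}$, both open in~$X$ with $W\subseteq W'$ and $W'\setminus W=\{x_j\}$, the isomorphism $\Colim\bigbr{\XK(A)}\cong\OK(A)$ turns this quotient into $\K_*\bigbr{A(W')}\big/\K_*\bigbr{A(W)}$. Because~$A$ has vanishing boundary maps, the six-term exact sequence~\eqref{eq:six-term_sequence} for the extension $A(W)\into A(W')\prto A(\{x_j\})$ breaks into short exact sequences, yielding $G^j\cong\K_*\bigbr{A(\{x_j\})}$. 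As $\{x_j\}$ is locally closed, the freeness hypothesis forces~$G^j$ to be free, completing the argument.

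The main obstacle is precisely this last identification: matching the abstract group~$G^j$ supplied by the proof of Lemma~\ref{lem:proj_dim_1} with the concrete $\K$-group $\K_*\bigbr{A(\{x_j\})}$ of a point. Once the vanishing of the boundary maps is used to split the relevant six-term sequence, the matching is immediate and the remaining steps are formal; note that only the freeness of $\K_*\bigbr{A(Y)}$ on singletons~$Y$ is actually needed.
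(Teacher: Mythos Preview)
Your argument is correct and follows essentially the same route as the paper: invoke Lemma~\ref{lem:flabby_cosheaf} to see that $\OK(A)$ is flabby, run the filtration from the proof of Lemma~\ref{lem:proj_dim_1}, identify $G^j\cong\K_*\bigbr{A(\{x_j\})}$ via the six-term sequence with vanishing boundaries, and conclude that $\XK(A)$ is an iterated extension of projectives. Your observation that only freeness of $\K_*\bigbr{A(\{x\})}$ for singletons~$x$ is actually used is correct and worth noting.
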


\begin{proof}
By Lemma \ref{lem:flabby_cosheaf}, the cosheaf $\OK(A)$ is flabby. We follow the proof of Lemma \ref{lem:proj_dim_1}. Our freeness assumption implies that the Abelian groups $G$ coming up in the proof are free: the six-term exact sequence shows that
\[
G^j = \K_*\bigl(A(U_x)\bigr) / \K_*\bigl(A(U_x\setminus\{x\})\bigr) \cong\K_*\bigl(A(\{x\})\bigr).
\]
Hence $\XK(A)$ is an iterated extension of projective modules and thus itself projective.
\end{proof}

\section{A universal coefficient theorem}

In this section, we establish a universal coefficient theorem for \Cstar{}algebras over~$X$ with vanishing boundary maps. We discuss the splitting of the resulting short exact sequence and the realization of objects in the bootstrap class as commutative algebras.

We describe how the invariant~$\XK$ fits into the framework for homological algebra in triangulated categories developed by Meyer and Nest in~\cite{Meyer-Nest:Homology_in_KK}. The set-up is given by the triangulated category $\KKcat(X)$ and the stable homological ideal $\Ideal\defeq\ker(\XK)$, the kernel of~$\XK$ on morphisms. Using the adjointness relation
\begin{equation}
  \label{eq:adjointness_relation}
 \KK_*(X;i_x\C,A)\cong\KK_*\bigbr{\C,A(U_x)}\cong\K_*\bigbr{A(U_x)}
\end{equation}
from \cite{MN:Bootstrap}*{Proposition~3.13} and machinery from~\cite{Meyer-Nest:Homology_in_KK}, one can easily show the following (a slightly more detailed account for the particular example at hand is given in~\cite{Bentmann-Meyer}*{\S4}):
\begin{itemize}
\item
the ideal~\(\Ideal\) has enough projective objects,
\item 
the functor~$\XK$ is the \emph{universal} $\Ideal$-exact stable homological functor,
\item
\(A\) belongs to~\(\Bootstrap(X)\) if and only if \(\KK_*(X;A,B)=0\) for all \(\Ideal\)\nb-contractible~\(B\).
\end{itemize}
These facts allow us to apply the abstract universal coefficient theorem \cite{Meyer-Nest:Homology_in_KK}*{Theorem~66} to our concrete setting. We abbreviate \(\AbelianCat\defeq\GCMod{\Z X}\).

\begin{theorem}
Let~$A$ and~$B$ be separable \Cstar{}algebras over~$X$. Assume that~$A$ belongs to $\Bootstrap(X)$ and has vanishing boundary maps. Then there is a natural short exact sequence of $\Z/2$-graded Abelian groups
\begin{equation}
  \label{eq:uct}
\Ext^1_\AbelianCat\bigbr{\XK(A)[1],\XK(B)}
\rightarrowtail\KK_*(X;A,B)
\twoheadrightarrow\Hom_\AbelianCat\bigbr{\XK(A),\XK(B)}.
\end{equation}
\end{theorem}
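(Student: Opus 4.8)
The plan is to obtain the statement as a direct application of the abstract universal coefficient theorem \cite{Meyer-Nest:Homology_in_KK}*{Theorem~66} to the homological ideal $\Ideal = \ker(\XK)$ and the functor $\XK$. The three properties recorded just above the theorem furnish exactly the structural hypotheses required: the ideal $\Ideal$ has enough projective objects, $\XK$ is the universal $\Ideal$-exact stable homological functor into the stable abelian category $\AbelianCat = \GCMod{\Z X}$, and membership in $\Bootstrap(X)$ is detected by the vanishing of $\KK_*(X;A,B)$ against $\Ideal$-contractible objects. The single remaining input that Theorem~66 needs, and the only place where the hypothesis that $A$ has vanishing boundary maps is used, is that $\XK(A)$ admits a projective resolution of length~$1$ in~$\AbelianCat$.

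To supply this, I would string together the cosheaf lemmas of the previous section. Since $A$ has vanishing boundary maps, Lemma~\ref{lem:flabby_cosheaf} shows that $\OK(A)$ is a flabby cosheaf. By Lemma~\ref{lem:res_and_colim_inverse} we have the natural isomorphism $\Colim\circ\XK\cong\OK$ on algebras with vanishing boundary maps, so the cosheaf $\Colim\bigbr{\XK(A)}\cong\OK(A)$ associated to the representation $\XK(A)$ is flabby. Lemma~\ref{lem:proj_dim_1} then yields a projective resolution of $\XK(A)$ of length~$1$, that is, $\XK(A)$ has projective dimension at most~$1$.

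With this bound established, Theorem~66 applies. In general the abstract theorem produces a spectral sequence built from the derived functors of $\Hom_\AbelianCat$; the vanishing of higher $\Ext$-groups forced by the length-one resolution is precisely what makes it collapse to the two-term short exact sequence~\eqref{eq:uct}, naturally in both variables. The degree shift $[1]$ in the $\Ext^1$-term reflects the $\Z/2$-grading of $\AbelianCat$ together with the identification of the suspension $\Sigma$ on $\KKcat(X)$ with the grading shift. I expect the only genuine work to lie in the projective-dimension bound, which is handled by the chain of lemmas above; matching the concrete data to the hypotheses of \cite{Meyer-Nest:Homology_in_KK}*{Theorem~66} is then routine, once one observes that $\GCMod{\Z X}$, being a graded module category over the ring $\Z X$, is a stable abelian category with enough projectives.
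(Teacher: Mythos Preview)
Your proposal is correct and follows essentially the same route as the paper's proof: invoke \cite{Meyer-Nest:Homology_in_KK}*{Theorem~66}, reduce to checking that $\XK(A)$ has projective dimension at most~$1$, and obtain this from Lemmas~\ref{lem:flabby_cosheaf} and~\ref{lem:proj_dim_1}. Your explicit insertion of Lemma~\ref{lem:res_and_colim_inverse} to identify $\Colim\bigbr{\XK(A)}$ with $\OK(A)$ just unpacks what the paper leaves implicit.
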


\begin{proof}
By~\cite{Meyer-Nest:Homology_in_KK}*{Theorem~66}, we only have to check that $\XK(A)$ has a projective resolution of length~$1$. This follows from Lemmas~\ref{lem:flabby_cosheaf} and~\ref{lem:proj_dim_1}.
\end{proof}

\begin{corollary}
  \label{cor:lifting}
Let~$A$ and~$B$ be separable \Cstar{}algebras over~$X$. Assume that~$A$ and~$B$ belong to $\Bootstrap(X)$ and that~$A$ has vanishing boundary maps. Then every isomorphism $\XK(A)\cong\XK(B)$ in~$\AbelianCat$ can be lifted to a $\KK(X)$-equivalence.
\end{corollary}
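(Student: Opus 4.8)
The plan is to exploit the surjectivity in the universal coefficient sequence~\eqref{eq:uct} to lift the given algebraic isomorphism to a class in $\KK_0(X;A,B)$, and then to recognise that this class is invertible by a mapping-cone argument inside the triangulated category $\KKcat(X)$. Throughout, write $\Ideal=\ker(\XK)$ and recall the three facts recorded before the theorem: $\XK$ is the universal $\Ideal$-exact stable homological functor, and $C\inOb\Bootstrap(X)$ if and only if $\KK_*(X;C,D)=0$ for every $\Ideal$-contractible~$D$.

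First, since $A$ has vanishing boundary maps, the preceding theorem applies and yields the sequence~\eqref{eq:uct}; in particular the quotient map $\KK_0(X;A,B)\twoheadrightarrow\Hom_\AbelianCat\bigbr{\XK(A),\XK(B)}$ is surjective. I would therefore fix the given isomorphism $\phi\colon\XK(A)\xrightarrow{\cong}\XK(B)$ in~$\AbelianCat$ and choose a class $\alpha\in\KK_0(X;A,B)$ with $\XK(\alpha)=\phi$.

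Next, I would complete $\alpha$ to an exact triangle $A\xrightarrow{\alpha}B\to C\to\Sigma A$ in $\KKcat(X)$, so that $C$ is the mapping cone of~$\alpha$. Because $A$ and $B$ lie in the triangulated subcategory $\Bootstrap(X)$, so does $C$. Applying the stable homological functor $\XK$ to this triangle produces a (cyclic, six-term) exact sequence of $\Z/2$-graded modules in which the map induced by $\alpha$ is $\phi$. Since $\phi$ is an isomorphism, exactness forces $\XK(C)=0$; equivalently, $C$ is $\Ideal$-contractible.

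Finally, I would invoke the bootstrap characterisation with the single object $C$ playing both roles: from $C\inOb\Bootstrap(X)$ and the fact that $C$ is $\Ideal$-contractible we obtain $\KK_*(X;C,C)=0$, whence $\mathrm{id}_C=0$ and $C\cong 0$ in $\KKcat(X)$. Therefore $\alpha$ is a $\KK(X)$-equivalence lifting $\phi$, as claimed. The only point that requires care is the penultimate step: one must know that $\XK(\alpha)$ is an isomorphism in \emph{all} degrees in order to conclude $\XK(C)=0$, and this is automatic because $\phi$ is an isomorphism of $\Z/2$-graded modules and $\XK$ respects the grading. It is worth emphasising that no hypothesis on $B$ beyond membership in $\Bootstrap(X)$ is used---in particular $B$ need not have vanishing boundary maps---since the argument proceeds entirely through the homological nature of $\XK$ and the bootstrap characterisation; this is precisely why the statement only assumes vanishing boundary maps for~$A$.
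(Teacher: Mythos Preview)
Your argument is correct and is precisely the ``standard argument'' that the paper defers to the references \cite{Blackadar:Kbook}*{Proposition~23.10.1} and \cite{MN:Filtrated}*{Corollary~4.6}: lift the isomorphism via the surjection in~\eqref{eq:uct}, pass to the mapping cone, observe it lies in $\Bootstrap(X)$ and is $\Ideal$-contractible, and conclude it is zero from the bootstrap characterisation. The paper merely records the key input---that $\XK(A)\cong\XK(B)$ has projective dimension at most~$1$---and cites the rest, while you have written out the cone argument in full; substantively the proofs coincide.
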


\begin{proof}
Since~$A$ has vanishing boundary maps, the module $\XK(A)\cong\XK(B)$ has a projective resolution of length~$1$ by Lemmas~\ref{lem:flabby_cosheaf} and~\ref{lem:proj_dim_1}. Hence the result follows from the universal coefficient theorem \cite{Meyer-Nest:Homology_in_KK}*{Theorem~66} by a standard argument; see, for instance, \cite{Blackadar:Kbook}*{Proposition~23.10.1} or \cite{MN:Filtrated}*{Corollary~4.6}.
\end{proof}

\begin{proposition}
  \label{pro:splitting}
Let~$A$ and~$B$ be separable \Cstar{}algebras over~$X$. Assume that~$A$ belongs to $\Bootstrap(X)$ and that~$A$ and~$B$ have vanishing boundary maps. Then the short exact sequence~\eqref{eq:uct} splits \textup{(}unnaturally\textup{)}.
\end{proposition}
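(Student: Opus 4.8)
The plan is to exploit the fact, emphasised in the introduction, that the incidence ring $\Z X$ carries no grading, so that every object of $\AbelianCat=\GCMod{\Z X}$ splits canonically as the direct sum of its even and its odd part, each regarded as a module concentrated in a single degree. Once both $\XK(A)$ and $\XK(B)$ are concentrated in single degrees, the graded groups $\Hom_\AbelianCat$ and $\Ext^1_\AbelianCat$ in~\eqref{eq:uct} land in complementary parities, so that in each degree one of the two outer terms vanishes and the extension splits for trivial reasons. All the work therefore goes into reducing to this single-degree situation.

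First I would replace~$B$ by an object of the bootstrap class. Since~$B$ has vanishing boundary maps, $\OK(B)=\Colim(\XK(B))$ is a flabby cosheaf by Lemma~\ref{lem:flabby_cosheaf}, so $\XK(B)$ has a projective resolution of length~$1$ by Lemma~\ref{lem:proj_dim_1}; realising this resolution geometrically (using that the projective modules~$P^x$ are realised by the objects~$i_x\C$, together with the adjointness relation~\eqref{eq:adjointness_relation}) produces an object $B'\in\Bootstrap(X)$ with vanishing boundary maps and $\XK(B')\cong\XK(B)$. As $B'\in\Bootstrap(X)$ has a length-$1$ resolution, the universal coefficient theorem for the pair $(B',B)$ gives a surjection $\KK_0(X;B',B)\twoheadrightarrow\Hom_\AbelianCat\bigbr{\XK(B'),\XK(B)}$, so the chosen isomorphism lifts to a morphism $g\colon B'\to B$ in~$\KKcat(X)$ with $\XK(g)$ invertible. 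By naturality of~\eqref{eq:uct} in the second variable, $g$ induces a morphism from the sequence for $(A,B')$ to that for $(A,B)$ which is an isomorphism on the two outer terms, hence, by the five lemma, an isomorphism of extensions. It therefore suffices to split the sequence for $(A,B')$; after renaming, I may assume $B\in\Bootstrap(X)$.

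Next I would decompose. Because $\Z X$ is ungraded, the injectivity of the structure maps of $\OK(A)$ and $\OK(B)$ is a degreewise condition, so the even and odd parts of $\XK(A)$ and of $\XK(B)$ again have flabby associated cosheaves and hence length-$1$ resolutions. Realising these as above and applying Corollary~\ref{cor:lifting} yields $\KK(X)$-decompositions $A\simeq A_0\oplus A_1$ and $B\simeq B_0\oplus B_1$ into objects of $\Bootstrap(X)$ with vanishing boundary maps whose invariants are concentrated in even, respectively odd, degree. The sequence~\eqref{eq:uct} for $(A,B)$ is then the direct sum of the four sequences for the pairs $(A_i,B_j)$, so it is enough to split each of these.

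Finally, for a pair with $\XK(A_i)$ and $\XK(B_j)$ each concentrated in a single degree, a direct inspection of the graded $\Hom_\AbelianCat$ and $\Ext^1_\AbelianCat$ groups shows that in each of the two degrees exactly one of the two outer terms of~\eqref{eq:uct} vanishes; the sequence is thus already split. The main obstacle is precisely this reduction to the single-degree case — that is, securing the realisation of~$B$ inside $\Bootstrap(X)$ with the naturality transport, and the degreewise decomposition of~$A$ and~$B$ — since the concluding splitting is then immediate. It is the absence of a grading on~$\Z X$ that both permits the degreewise decomposition and forces the complementary-parity vanishing in the last step.
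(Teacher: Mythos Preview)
Your proof is correct and follows essentially the same strategy as the paper: decompose $\XK(A)$ and $\XK(B)$ into their even and odd parts, realise these by bootstrap objects, and observe that for single-degree invariants one of the outer terms of~\eqref{eq:uct} vanishes in each parity. The only minor difference is that you first replace~$B$ by a bootstrap object via naturality and the five lemma, whereas the paper leaves~$B$ in place and instead runs a localising-subcategory argument to show that the comparison element in $\KK_0(X;B,B_1\oplus\Sigma B_2)$ induces isomorphisms $\KK_*(X;D,B)\cong\KK_*(X;D,B_1\oplus\Sigma B_2)$ for every $D\in\Bootstrap(X)$; both devices accomplish the same reduction.
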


\begin{proof}
For this result, it is crucial that the ring~$\Z X$ itself is ungraded. We can thus imitate the proof from \cite{Blackadar:Kbook}*{\S23.11}: we have direct sum decompositions $\XK(A)\cong M_0\oplus M_1[1]$ and $\XK(B)\cong N_0\oplus N_1[1]$ where~$M_i$ and~$N_i$ are ungraded $\Z X$\nb-modules of projective dimension at most~$1$. By a simple argument based on the universality of the functor~$\XK$ (compare \cite{MN:Filtrated}*{Theorem~4.8}), we can find objects~$A_i$ and~$B_i$ in~$\Bootstrap(X)$ such that $\XK(A_i)\cong M_i[0]$ and $\XK(B_i)\cong N_i[0]$ for $i\in\{0,1\}$. By Corollary~\ref{cor:lifting}, there is a (non-canonical) $\KK(X)$-equivalence $A\cong A_1\oplus\Sigma A_2$. Using the universal coefficient theorem, we can find an element $f\in\KK_0(X;B, B_1\oplus\Sigma B_2)$ inducing an isomorphism $\XK(B)\cong\XK(B_1\oplus\Sigma B_2)$. By the definition of~$\XK$, the element~$f$ induces isomorphisms $\KK(X;i_x\C,B)\cong\KK(X;i_x\C,B_1\oplus\Sigma B_2)$ for all $x\in X$. The usual bootstrap argument shows that~$f$ induces isomorphisms $\KK(X;D,B)\cong\KK(X;D,B_1\oplus\Sigma B_2)$ for every object~$D$ in~$\Bootstrap(X)$. We may thus replace~$A$ by $A_1\oplus\Sigma A_2$ and~$B$ by $B_1\oplus\Sigma B_2$. Hence the sequence~\eqref{eq:uct} decomposes as a direct sum of four sequences in which, for degree reasons, either the left-hand or the right-hand term vanishes, making the construction of a splitting trivial.
\end{proof}

\begin{proposition}
	\label{pro:commutative}
Let~$A$ be a separable \Cstar{}algebra over~$X$ with vanishing boundary maps. Then there is a commutative \Cstar{}algebra~$C$ over~$X$ such that $\XK(A)\cong\XK(C)$. The spectrum of~$C$ may be chosen to be at most three-dimensional. If $\XK(A)$ is finitely generated, the spectrum of~$C$ may be chosen to be a finite complex of dimension at most three.
\end{proposition}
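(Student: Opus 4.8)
The plan is to realize the $\Z X$\nb-module $M\defeq\XK(A)$ directly; since the construction depends only on~$M$, we shall never need~$A$ itself to lie in~$\Bootstrap(X)$. The whole argument rests on the fact that~$M$ has a projective resolution of length one. Indeed, by Lemma~\ref{lem:flabby_cosheaf} the cosheaf $\OK(A)$ is flabby, hence $\Colim(M)\cong\OK(A)$ is flabby and Lemma~\ref{lem:proj_dim_1} yields a resolution $0\to P_1\xrightarrow{\delta}P_0\to M\to 0$ with $P_0,P_1$ projective. Each of these splits as a direct sum of summands $P^x\otimes F$ with~$F$ a free $\Z/2$\nb-graded Abelian group, and we recall from the adjointness relation~\eqref{eq:adjointness_relation} that $P^x\cong\XK(i_x\C)$.

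First I would realize $P_0$ and $P_1$ by \emph{commutative} \Cstar{}algebras over~$X$. A summand $P^x\otimes F$ is realized by $i_x\Cont_0(Z)$, where~$Z$ is (the complement of the basepoint in) a wedge of circles and $2$\nb-spheres, chosen so that $\K_*\bigbr{\Cont_0(Z)}\cong F$: one uses copies of~$S^1$ for free generators in odd degree and of~$S^2$ in even degree, and views the spectrum as mapping entirely to the point~$x$. Forming direct sums (disjoint unions of spectra) produces commutative \Cstar{}algebras $\mathcal P_0,\mathcal P_1$ over~$X$ with $\XK(\mathcal P_i)\cong P_i$ and spectra of dimension at most~$2$.

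The main step, and the main obstacle, is to realize the boundary map~$\delta$ by an honest \Star{}homomorphism $\phi\colon\mathcal P_1\to\mathcal P_0$ over~$X$: a mere $\KK(X)$\nb-lift of~$\delta$ would only produce a non-commutative mapping cone. Two ingredients enter. The integer entries of~$\delta$ are realized by based maps between wedges of equidimensional spheres (degree maps, combined with the pinch and fold maps to realize sums and differences), which induce arbitrary integer matrices on reduced K-theory. The underlying $\Z X$\nb-module structure is realized because each structure map $\Hom_{\Z X}(P^x,P^y)=P^y(x)$ is induced by an inclusion-type \Star{}homomorphism $i_x\C\to i_y\C$ over~$X$; composing these with the degree maps realizes~$\delta$ componentwise. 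It is worth emphasizing that realizing multiplication by an integer on~$\K_0$ through a \emph{commutative} \Star{}homomorphism forces the use of $2$\nb-spheres rather than points, which is ultimately responsible for the dimension bound.

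Finally I would set $C\defeq\Cont\phi$, the mapping cone of~$\phi$. Being a subalgebra of $\mathcal P_1\oplus\Cont_0\bigbr{(0,1],\mathcal P_0}$, it is commutative, and since~$\phi$ is a morphism over~$X$ it is naturally a \Cstar{}algebra over~$X$ with $C(U)=\Cont\bigbr{\phi|_U}$. Because $\delta=\XK(\phi)$ is injective, the six-term sequence of the mapping-cone extension $\Cont_0\bigbr{(0,1),\mathcal P_0}\rightarrowtail\Cont\phi\twoheadrightarrow\mathcal P_1$ collapses to give isomorphisms $\K_0\bigbr{C(U_x)}\cong\coker(\delta_1)$ and $\K_1\bigbr{C(U_x)}\cong\coker(\delta_0)$ for every $x\in X$. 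The mapping cone thus interchanges the two degrees; arranging the resolution so that the odd part~$M_1$ is resolved through the $\K_0$\nb-data and the even part~$M_0$ through the $\K_1$\nb-data, we obtain $\XK(C)\cong M$. The spectrum of~$C$ is the mapping cone of the proper map of spaces dual to~$\phi$: coning the circles contributes cells of dimension at most~$2$ and coning the $2$\nb-spheres cells of dimension at most~$3$, whence the bound. If $\XK(A)$ is finitely generated we may take $P_0,P_1$ finitely generated free, so that all wedges are finite and the spectrum of~$C$ is a finite complex of dimension at most~$3$.
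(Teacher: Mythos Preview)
Your argument is correct and follows the paper's strategy: establish that $\XK(A)$ has projective dimension at most one, realize a length-one resolution by commutative \Cstar{}algebras over~$X$, lift the boundary map to an honest \Star{}homomorphism over~$X$, and take the mapping cone. The only difference is that the paper first reduces by suspension to the case where $\XK(A)$ is concentrated in a single degree and then works exclusively with circles, whereas you treat both parities at once using circles and $2$\nb-spheres and absorb the mapping-cone degree shift by swapping which parity is carried by~$\K_0$ and which by~$\K_1$; either route yields the dimension bound of~$3$.
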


\begin{proof}
It is straight-forward to generalize the argument from \cite{Blackadar:Kbook}*{Corollary~23.10.3}. Using that modules split into even and odd part, a suspension argument reduces to the case that $\XK(A)$ vanishes in degree zero. Choose a projective resolution
\[
 0\to P_1\xrightarrow{f} P_0\to\XK(A)\to 0
\]
such that $P_i=\bigoplus_{x\in X}\bigoplus_\N (P^x\oplus P^x[1])$. Setting $D_i=\bigoplus_{x\in X}\bigoplus_\N\bigbr{i_x\C\otimes C(S^1)}$, we have $P_i\cong\XK(D_i)$. Then there is a \Star{}homomorphism $\varphi\colon D_1\to D_0$ over~$X$ inducing the map~$f$. The mapping cone of~$\varphi$ has the desired properties. In the finitely generated case, it clearly suffices to use finite direct sums instead of countable ones.
\end{proof}

\begin{corollary}
Let~$A$ be a separable \Cstar{}algebra over~$X$ with vanishing boundary maps. Then~$A$ belongs to the bootstrap class~$\Bootstrap(X)$ if and only if~$A$ is $\KK(X)$\nb-equiv\-a\-lent to a commutative \Cstar{}algebra over~$X$.
\end{corollary}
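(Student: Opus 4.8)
The plan is to treat the two implications separately, deriving the forward one from Proposition~\ref{pro:commutative} together with the lifting result Corollary~\ref{cor:lifting}, and the reverse one from the closure of $\Bootstrap(X)$ under $\KK(X)$\nb-equivalence.

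Assume first that $A\in\Bootstrap(X)$ has vanishing boundary maps. Proposition~\ref{pro:commutative} produces a commutative \Cstar{}algebra~$C$ over~$X$ with $\XK(C)\cong\XK(A)$, and before applying Corollary~\ref{cor:lifting} I first have to check that $C$ itself lies in $\Bootstrap(X)$. This can be read off from the construction in the proof of Proposition~\ref{pro:commutative}, where $C$ appears as the mapping cone of a \Star{}homomorphism $\varphi\colon D_1\to D_0$ over~$X$ with each~$D_i$ a countable direct sum of copies of $i_x\C\otimes\Cont(S^1)$. Since $i_x\C\in\Bootstrap(X)$ by definition and $\Cont(S^1)$ is $\KK$\nb-equivalent to $\C\oplus\Sigma\C$, each summand is $\KK(X)$\nb-equivalent to $i_x\C\oplus\Sigma(i_x\C)$ and hence lies in $\Bootstrap(X)$; as $\Bootstrap(X)$ is triangulated and closed under countable direct sums, it contains $D_0$, $D_1$ and the mapping cone~$C$. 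With $A$ and $C$ both in $\Bootstrap(X)$ and $A$ of vanishing boundary maps, Corollary~\ref{cor:lifting} lifts $\XK(A)\cong\XK(C)$ to a $\KK(X)$\nb-equivalence, exhibiting $A$ as $\KK(X)$\nb-equivalent to the commutative algebra~$C$.

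Conversely, suppose $A$ is $\KK(X)$\nb-equivalent to some commutative \Cstar{}algebra~$C'$ over~$X$. As $\Bootstrap(X)$ is a triangulated subcategory of $\KKcat(X)$, it is closed under isomorphism in $\KKcat(X)$, so it suffices to show $C'\in\Bootstrap(X)$. Here I would invoke the fibrewise description of the equivariant bootstrap class from~\cite{MN:Bootstrap}: an object of $\KKcat(X)$ belongs to $\Bootstrap(X)$ once each point-subquotient $C'(\{x\})$ lies in the ordinary bootstrap class~$\Bootstrap$. Every such subquotient of the commutative algebra~$C'$ is itself a separable commutative---hence type~I---\Cstar{}algebra and therefore lies in~$\Bootstrap$. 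Thus $C'\in\Bootstrap(X)$, and the $\KK(X)$\nb-equivalence yields $A\in\Bootstrap(X)$.

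Most of this assembly is routine given the results already established; the one genuinely delicate point is the membership of commutative algebras in the bootstrap class. In the forward direction I can sidestep the general question by using the explicit description of~$C$ as an object built from the generators $i_x\C$ via suspensions, countable direct sums and a mapping cone, all operations preserving $\Bootstrap(X)$. In the reverse direction, however, $C'$ is arbitrary, so the argument must rest on the fibrewise criterion for $\Bootstrap(X)$; I expect verifying that this criterion applies---and citing it correctly---to be the main obstacle, as it is the only step relying on external structural facts about the bootstrap class rather than on the machinery developed above.
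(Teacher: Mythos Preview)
Your proof is correct and follows essentially the same line as the paper's: the reverse implication via the fibrewise bootstrap criterion for nuclear algebras from \cite{MN:Bootstrap}*{Corollary~4.13}, and the forward one via Proposition~\ref{pro:commutative} plus Corollary~\ref{cor:lifting}. The one difference is organisational: the paper first proves that \emph{any} commutative \Cstar{}algebra over~$X$ lies in $\Bootstrap(X)$ (commutative $\Rightarrow$ nuclear, fibres commutative $\Rightarrow$ in~$\Bootstrap$), and then simply reuses this fact to see that the specific~$C$ from Proposition~\ref{pro:commutative} is in $\Bootstrap(X)$; you instead verify $C\in\Bootstrap(X)$ by unpacking its construction as a mapping cone of sums of $i_x\C\otimes\Cont(S^1)$'s, which works but duplicates effort since you need the general fibrewise argument anyway for the reverse direction. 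One small point to tighten: the fibrewise criterion you invoke requires nuclearity, so state that explicitly rather than leaving it implicit in ``commutative $\Rightarrow$ type~I''.
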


\begin{proof}
If~$A$ is a commutative \Cstar{}algebra over~$X$ then it is nuclear and the subquotient $A(\{x\})$ belongs to the bootstrap class~$\Bootstrap$ for every $x\in X$. Hence~$A$ belongs to $\Bootstrap(X)$ by \cite{MN:Bootstrap}*{Corollary~4.13}. Since~$\Bootstrap(X)$ is closed under $\KK(X)$\nb-equiv\-a\-lence, one implication follows. The converse implication follows from Proposition~\ref{pro:commutative} and Corollary~\ref{cor:lifting}.
\end{proof}

\begin{remark}
The stable homological functor~$\OK$ does not fit into this framework as nicely: if the space $X$ is sufficiently complicated then $\OK$ is not universal for its kernel on morphisms because it has ``hidden symmetries.'' More precisely, there are natural transformations among the $\K$-the\-o\-ret\-i\-cal functors comprised by the invariant~$\OK$, the action of which is not part of the definition of~$\OK$ (compare \cite{MN:Filtrated}*{\S2.1}).
\end{remark}

\section{Classification of certain Kirchberg \texorpdfstring{$X$}{X}-algebras}

In this section, we use our universal coefficient theorem to obtain classification results for Kirchberg $X$\nb-al\-ge\-bras with vanishing boundary maps.

\begin{definition}
A \Cstar{}algebra over~$X$ is a \emph{Kirchberg $X$\nb-algebra} if it is tight, nuclear, purely infinite and separable.
\end{definition}

\begin{theorem}
	\label{thm:classification_of_KXA_with_RR0_and_RP}
Let~$A$ and~$B$ be stable real-rank-zero Kirchberg $X$\nb-al\-ge\-bras with intermediate cancellation and simple subquotients in the bootstrap class~$\Bootstrap$. Then every isomorphism $\XK(A)\cong\XK(B)$ can be lifted to a \Star{}iso\-mor\-phism over~$X$. Consequently, every isomorphism $\OK(A)\cong\OK(B)$ can be lifted to a \Star{}iso\-mor\-phism over~$X$.
\end{theorem}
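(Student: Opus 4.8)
The plan is to reinterpret the hypotheses through the universal coefficient theorem, use Corollary~\ref{cor:lifting} to produce a $\KK(X)$\nb-equivalence, and then invoke Kirchberg's classification theorem to realise this equivalence by a \Star{}isomorphism over~$X$.

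First I would check that $A$ and $B$ satisfy the hypotheses of Corollary~\ref{cor:lifting}. Being separable, purely infinite and tight over~$X$, each of them has vanishing exponential maps if and only if it has real rank zero, by Proposition~\ref{pro:characterization_of_KXA_with_vanishing_exponential_maps}, and has vanishing index maps if and only if it has intermediate cancellation, by Proposition~\ref{pro:characterization_of_KXA_with_vanishing_index_maps}; hence both algebras have vanishing boundary maps. Since the simple subquotients of a tight $X$\nb-algebra are precisely the distinguished subquotients $A(\{x\})$ for $x\in X$ (each having one\nb-point primitive spectrum, hence simple), the assumption that these belong to~$\Bootstrap$, together with nuclearity, places $A$ and $B$ in $\Bootstrap(X)$ by \cite{MN:Bootstrap}*{Corollary~4.13}.

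With these verifications in hand, Corollary~\ref{cor:lifting} lifts the given isomorphism $\XK(A)\cong\XK(B)$ to a $\KK(X)$\nb-equivalence. The remaining---and most substantial---step is to pass from a $\KK(X)$\nb-equivalence to a genuine \Star{}isomorphism over~$X$, for which I would appeal to Kirchberg's classification theorem~\cite{Kirchberg}: stable Kirchberg $X$\nb-algebras are classified up to \Star{}isomorphism over~$X$ by their class in $\KKcat(X)$. As $\XK$ is defined on $\KKcat(X)$ and the \Star{}isomorphism produced carries the prescribed $\KK(X)$\nb-class, it induces the original isomorphism on~$\XK$, as required for a lift. I expect the main obstacle to be bookkeeping at this step rather than a new idea: one must confirm that the algebras genuinely lie within the scope of Kirchberg's theorem---in particular that nuclearity, tightness and pure infiniteness supply the $\Cuntz_\infty$\nb-absorption the theorem requires---since the substance of the classification has already been isolated in the universal coefficient theorem.

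For the final clause, I would exploit that $A$ and $B$ have vanishing boundary maps, so that the natural isomorphism $\OK\cong\Colim\circ\XK$ noted after Lemma~\ref{lem:flabby_cosheaf} applies to both, and that $\Res$ and $\Colim$ are mutually inverse equivalences by Lemma~\ref{lem:res_and_colim_inverse}. Applying $\Res$ to a prescribed isomorphism $\beta\colon\OK(A)\to\OK(B)$ produces an isomorphism $\Res(\beta)\colon\XK(A)\to\XK(B)$, which the first part lifts to a \Star{}isomorphism $\varphi$ over~$X$ satisfying $\XK(\varphi)=\Res(\beta)$. Because $\Res$ is faithful and $\OK$ is recovered from $\XK$ via $\Colim$, the induced map $\OK(\varphi)$ then coincides with~$\beta$, which completes the argument.
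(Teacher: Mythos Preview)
Your proposal is correct and follows essentially the same route as the paper: translate real rank zero and intermediate cancellation into vanishing boundary maps via Propositions~\ref{pro:characterization_of_KXA_with_vanishing_exponential_maps} and~\ref{pro:characterization_of_KXA_with_vanishing_index_maps}, use \cite{MN:Bootstrap}*{Corollary~4.13} for bootstrap membership, apply Corollary~\ref{cor:lifting} and then Kirchberg's theorem, and deduce the $\OK$ statement from the $\Res$/$\Colim$ equivalence of Lemma~\ref{lem:res_and_colim_inverse}. The only place the paper is more precise is the $\Cuntz_\infty$\nb-absorption you flag as bookkeeping: nuclearity, tightness and pure infiniteness alone do not suffice in the non-simple setting, and the paper instead invokes \cite{Kirchberg_Rordam:Infinite}*{Corollary~9.4}, which uses stability and real rank zero---both already among your hypotheses.
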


\begin{proof}
By Propositions~\ref{pro:characterization_of_KXA_with_vanishing_exponential_maps} and~\ref{pro:characterization_of_KXA_with_vanishing_index_maps}, the algebras $A$ and~$B$ have vanishing boundary maps. Hence the first claim follows from Corollary~\ref{cor:lifting} together with Kirchberg's classification theorem~\cite{Kirchberg}. Recall that a nuclear \Cstar{}algebra belongs to $\Bootstrap(X)$ if and only if the fibre $A(\{x\})$ belongs to~$\Bootstrap$ for every $x\in X$ by~\cite{MN:Bootstrap}*{Corollary~4.13}. Notice also that stable nuclear purely infinite \Cstar{}algebras with real rank zero are $\Cuntz_\infty$\nb-absorbing by \cite{Kirchberg_Rordam:Infinite}*{Corollary~9.4}. The second claim follows from the equivalence in Lemma~\ref{lem:res_and_colim_inverse}.
\end{proof}

Next, we establish a range result for the invariant~$\OK$ on stable real-rank-zero Kirchberg $X$-algebra with intermediate cancellation. For this, we need to assume that~$X$ is a unique path space.

\begin{theorem}
  \label{thm:range_on_KXA}
Assume that~$X$ is a unique path space. Let~$M$ be a flabby cosheaf on~$\Open(X)$. Then there is a stable real-rank-zero Kirchberg $X$-algebra with intermediate cancellation and simple subquotients in the bootstrap class~$\Bootstrap$ such that $\OK(A)\cong M$.
\end{theorem}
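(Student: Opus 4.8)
The plan is to reduce the realization of the flabby cosheaf~$M$ to the realization of its associated representation, and then to build a Kirchberg $X$\nb-algebra with the prescribed $\K$\nb-theory by an inductive extension construction along an open filtration of~$X$.

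First I would pass from cosheaves to representations. By Lemma~\ref{lem:res_and_colim_inverse} the functors $\Res$ and $\Colim$ are mutually inverse, so~$M$ is determined by the representation $N\defeq\Res(M)\inOb\Reps(X)$, and $M\cong\Colim(N)$. Flabbiness of~$M$ means that all its structure maps are injective, so the short exact sequences coming from the six-term sequences are available; in particular each stalk
\[
G^x\defeq M(U_x)\big/ M(U_x\setminus\{x\})
\]
is a well-defined countable $\Z/2$\nb-graded Abelian group, exactly as the group $G^j$ arising in the proof of Lemma~\ref{lem:proj_dim_1}. It therefore suffices to construct a stable, tight Kirchberg $X$\nb-algebra~$A$ with simple subquotients in~$\Bootstrap$, with vanishing boundary maps, and with $\XK(A)\cong N$. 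Indeed, once this is done, Propositions~\ref{pro:characterization_of_KXA_with_vanishing_exponential_maps} and~\ref{pro:characterization_of_KXA_with_vanishing_index_maps} show that~$A$ has real rank zero and intermediate cancellation, while the natural isomorphism $\Colim\circ\XK\cong\OK$ for algebras with vanishing boundary maps (Lemmas~\ref{lem:res_and_colim_inverse} and~\ref{lem:flabby_cosheaf}) yields $\OK(A)\cong\Colim(N)\cong M$.

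To construct~$A$ I would work along a filtration $\emptyset=V_0\subsetneq V_1\subsetneq\dotsb\subsetneq V_\ell=X$ with $V_j\setminus V_{j-1}=\{x_j\}$, as in Corollary~\ref{cor:vanishing_boundary_maps_criterion}, building stable tight Kirchberg $V_j$\nb-algebras~$A_j$ with vanishing boundary maps, simple subquotients in~$\Bootstrap$, and $\XK(A_j)\cong N|_{V_j}$. At the $j$\nb-th step I realize the stalk~$G^{x_j}$ by a stable simple Kirchberg algebra~$B_j$ in the bootstrap class with $\K_*(B_j)\cong G^{x_j}$ (Kirchberg--Phillips range of invariant), taking $B_j\cong\Cuntz_2\otimes\Compacts$ when $G^{x_j}=0$ so that~$B_j$ stays nonzero and tightness is preserved. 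I then form an extension
\[
A_{j-1}\rightarrowtail A_j\twoheadrightarrow B_j,
\]
whose class in the relevant $\KK^1(X)$\nb-group is chosen, via the universal coefficient theorem and the universality of~$\XK$, so that the induced structure maps reproduce those prescribed by~$N$ and so that the index and exponential maps of the extension vanish. Pure infiniteness (and nuclearity and separability) of~$A_j$ is ensured by arranging the extension to be full and invoking the permanence results of Kirchberg and \Rordam~\cite{Kirchberg_Rordam:Non-simple}; tightness follows because~$V_{j-1}$ remains open and~$\{x_j\}$ a closed point of the spectrum. A repeated application of the extension criterion underlying Corollary~\ref{cor:vanishing_boundary_maps_criterion} then shows that $A\defeq A_\ell$ has vanishing boundary maps and $\XK(A)\cong N$.

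The hard part is the simultaneous control exercised at each extension step: one must choose the extension class so that the induced maps on~$\XK$ agree with~$N$, while at the same time keeping the boundary maps zero and the algebra purely infinite and tight. This is precisely where the hypothesis that~$X$ is a unique path space enters. Because any two comparable points are joined by at most one directed path, the structure maps of~$N$ factor through unique chains, so there are no commuting-square coherence conditions to satisfy and the extension data can be prescribed freely along each path while remaining automatically compatible. For a general finite $T_0$\nb-space such coherence conditions---the \emph{hidden symmetries} of~$\OK$---may obstruct the realization, which is exactly why the range statement is restricted to unique path spaces.
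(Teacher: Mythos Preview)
Your reduction in the first two paragraphs is correct and matches the paper: it suffices to produce a stable Kirchberg $X$\nb-algebra~$A$ in~$\Bootstrap(X)$ with $\XK(A)\cong\Res(M)$ and vanishing boundary maps, after which Propositions~\ref{pro:characterization_of_KXA_with_vanishing_exponential_maps} and~\ref{pro:characterization_of_KXA_with_vanishing_index_maps} and Lemma~\ref{lem:flabby_cosheaf} finish the argument. But your construction of~$A$ and your account of where the unique path space hypothesis enters both diverge from the paper, and the latter contains a genuine misidentification.

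The paper does not build~$A$ by iterated extensions. It uses Lemma~\ref{lem:proj_dim_1} (flabbiness of~$M$ gives $\Res(M)$ projective dimension at most~$1$) together with the universality of~$\XK$ to realize $\Res(M)$ as $\XK(A)$ for \emph{some} $A\in\Bootstrap(X)$ in one stroke---this is the standard argument of \cite{MN:Filtrated}*{Theorem~4.8}---and then invokes \cite{MN:Bootstrap}*{Corollary~5.5} to replace~$A$ by a stable Kirchberg $X$\nb-algebra with simple subquotients in~$\Bootstrap$. No control over boundary maps is exercised during this construction. The unique path space hypothesis is applied only \emph{afterwards}, and for a purely topological reason: when~$X$ is a unique path space, $U_x\setminus\{x\}$ is the \emph{disjoint} union of the sets~$U_y$ with $y\to x$, so that
\[
\K_*\bigl(A(U_x\setminus\{x\})\bigr)\cong\bigoplus_{y\to x}\K_*\bigl(A(U_y)\bigr)
\]
is computable from $\XK(A)\cong\Res(M)$ alone and agrees with $M(U_x\setminus\{x\})$. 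Flabbiness of~$M$ then makes the map $\K_*\bigl(A(U_x\setminus\{x\})\bigr)\to\K_*\bigl(A(U_x)\bigr)$ injective, and Corollary~\ref{cor:vanishing_boundary_maps_criterion} forces vanishing boundary maps a~posteriori.

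Your claim that the unique path space hypothesis serves to eliminate ``commuting-square coherence conditions'' in the inductive extension construction is not where the hypothesis does its work. Even in your approach the essential use would be the disjoint decomposition above: without it, knowing $\XK(A_{j-1})\cong N|_{V_{j-1}}$ does not determine $\K_*\bigl(A_{j-1}(U_{x_j}\setminus\{x_j\})\bigr)$, since $U_{x_j}\setminus\{x_j\}$ is in general not a basic open or a disjoint union thereof; hence you cannot verify that $\XK(A_j)(x_j)\cong N(x_j)$ or that the relevant boundary map vanishes. Separately, the phrase ``choose the extension class in the relevant $\KK^1(X)$\nb-group so that the structure maps match, the boundary maps vanish, and~$A_j$ is tight over~$V_j$'' hides nontrivial work: an arbitrary extension of~$B_j$ by~$A_{j-1}$ yields a primitive ideal space equal to $V_{j-1}\cup\{*\}$ as a set, but with topology dictated by the Busby invariant rather than by~$V_j$. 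The paper's route via \cite{MN:Bootstrap}*{Corollary~5.5} sidesteps all of this.
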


\begin{proof}
Since~$M$ is a flabby cosheaf, its restriction $\Res(M)\inOb\Reps(X)$ has a pro\-jec\-tive resolution of length~$1$ by Lemma~\ref{lem:proj_dim_1}. A simple argument as in \cite{MN:Filtrated}*{Theorem~4.8} shows that there is a separable \Cstar{}algebra~$A$ over~$X$ in the bootstrap class $\Bootstrap(X)$ with $\XK(A)\cong\Res(M)$. By \cite{MN:Bootstrap}*{Corollary~5.5}, we may assume that~$A$ is a stable Kirchberg $X$-algebra with simple subquotients in~$\Bootstrap$.

Since~$X$ is a unique path space, the set $U_x\setminus\{x\}$ is the disjoint union of the sets~$U_y$, where~$y$ is a closed point in $U_x\setminus\{x\}$. Hence the map $\K_*\bigbr{A(U_x\setminus\{x\})}\to\K_*\bigbr{A(U_x)}$ identifies with the map $M(U_x\setminus\{x\})\to M(U_x)$ because $\K$\nb-theory preserves direct sums and cosheaves take disjoint unions to direct sums. Since~$M$ is flabby by assumption, Corollary~\ref{cor:vanishing_boundary_maps_criterion} therefore shows that~$A$ has vanishing boundary maps. Thus~$A$ has real rank zero and intermediate cancellation by Propositions~\ref{pro:characterization_of_KXA_with_vanishing_exponential_maps} and~\ref{pro:characterization_of_KXA_with_vanishing_index_maps} and we have $\OK(A)\cong\Colim\bigbr{\XK(A)}\cong\Colim\bigbr{\Res(M)}\cong M$.
\end{proof}

\begin{corollary}
  \label{cor:complete_classification_of_KXA}
Assume that~$X$ is a unique path space. The functors $\OK$ and $\XK$ implement bijections of isomorphism classes of
\begin{itemize}
\item stable real-rank-zero Kirchberg $X$\nb-al\-ge\-bras with intermediate cancellation and simple subquotients in the bootstrap class~$\Bootstrap$,
\item flabby cosheaves on $\Open(X)$,
\item representations of~$X$ whose associated cosheaf is flabby.
\end{itemize}
\end{corollary}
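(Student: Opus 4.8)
The plan is to assemble this statement directly from the three results already established: the classification Theorem~\ref{thm:classification_of_KXA_with_RR0_and_RP} will supply injectivity, the range Theorem~\ref{thm:range_on_KXA} will supply surjectivity, and the equivalence of Lemma~\ref{lem:res_and_colim_inverse} will identify the second and third classes. Since $\Res$ and $\Colim$ are mutually inverse equivalences and we have $\XK=\Res\circ\OK$ together with the natural isomorphism $\Colim\circ\XK\cong\OK$ on algebras with vanishing boundary maps, I would first reduce the whole assertion to the single statement that $\OK$ induces a bijection from the first class onto flabby cosheaves; the $\XK$-version and the compatibility of all three bijections then come for free by composing with $\Res$ and $\Colim$.

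First I would check that $\OK$ actually lands in the class of flabby cosheaves. Any algebra~$A$ in the first class is a separable, purely infinite, tight \Cstar{}algebra over~$X$ which has real rank zero and intermediate cancellation, so by Propositions~\ref{pro:characterization_of_KXA_with_vanishing_exponential_maps} and~\ref{pro:characterization_of_KXA_with_vanishing_index_maps} it has vanishing boundary maps; Lemma~\ref{lem:flabby_cosheaf} then shows that $\OK(A)$ is a flabby cosheaf, and $\XK(A)=\Res\bigbr{\OK(A)}$ is a representation whose associated cosheaf $\Colim\bigbr{\XK(A)}\cong\OK(A)$ is flabby. Injectivity on isomorphism classes is precisely the content of Theorem~\ref{thm:classification_of_KXA_with_RR0_and_RP}: an isomorphism $\OK(A)\cong\OK(B)$ (equivalently $\XK(A)\cong\XK(B)$) lifts to a \Star{}isomorphism over~$X$, so distinct isomorphism classes have distinct images. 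Surjectivity onto flabby cosheaves is exactly Theorem~\ref{thm:range_on_KXA}, and this is the only place where the unique-path-space hypothesis enters.

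Finally I would transfer everything to~$\XK$ using Lemma~\ref{lem:res_and_colim_inverse}. Applying $\Res$ to a flabby cosheaf yields a representation whose associated cosheaf, namely $\Colim\circ\Res$ of the original cosheaf, is isomorphic to it and hence flabby; conversely $\Colim$ carries a representation with flabby associated cosheaf to a flabby cosheaf, and these assignments are mutually inverse bijections on isomorphism classes. Composing with the $\OK$-bijection therefore gives the $\XK$-bijection, and the three resulting identifications are compatible by construction. I do not expect a genuine obstacle here, as all the substantive work has been carried out in the cited theorems; the only point requiring a moment's care is confirming that the two target categories correspond bijectively under $\Res$ and $\Colim$, which is immediate from Lemma~\ref{lem:res_and_colim_inverse} and the definition of \emph{flabby}.
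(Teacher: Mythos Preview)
Your proposal is correct and follows essentially the same route as the paper: both arguments combine Theorem~\ref{thm:classification_of_KXA_with_RR0_and_RP} for injectivity, Theorem~\ref{thm:range_on_KXA} for surjectivity, and Lemma~\ref{lem:res_and_colim_inverse} to pass between cosheaves and representations. The paper packages this as a single commutative-diagram argument (one arrow injective, another surjective, the bottom two mutually inverse, hence all four bijective), whereas you first establish that $\OK$ is a bijection and then transfer to~$\XK$; you are also more explicit about why $\OK$ lands in flabby cosheaves, which the paper leaves implicit.
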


\begin{proof}
Denote the three sets above by (Kirchberg), (Cosheaves) and (Representations), respectively. We have maps induced by functors as indicated in the following commutative diagram.
\[
 \xymatrix{
& \textup{(Kirchberg)}\ar@/_1pc/[dl]_{\OK}\ar@/^1pc/[dr]^{\XK} & \\
\textup{(Cosheaves)}\ar@/_1pc/[rr]^{\Res} & & \textup{(Representations)}\ar@/_1pc/[ll]^{\Colim}
}
\]
We observed in \S\ref{sec:reps_and_cosheaves} that the functors $\Res$ and $\Colim$ induce mutually inverse bijections. By Theorem~\ref{thm:classification_of_KXA_with_RR0_and_RP}, the functor~$\XK$ induces an injective map. By Theorem~\ref{thm:range_on_KXA}, the functor~$\OK$ induces a surjective map. Hence all four maps are bijective.
\end{proof}

Now we enhance our invariant in order to obtain a classification result in the unital case.

\begin{definition}
A \emph{pointed cosheaf} on~$\Open(X)$ is a cosheaf~$M$ on~$\Open(X)$ together with a distinguished element $m\in M(X)_0$. A morphism of pointed cosheaves is a morphism of cosheaves preserving the distinguished element. The category of pointed cosheaves on~$\Open(X)$ is denoted by $\CoSheaf\bigbr{\Open(X)}^+$.
\end{definition}

\begin{definition}
Let $\KKcat(X)^+$ denote the full subcategory of $\KKcat(X)$ consisting of all unital separable \Cstar{}algebras over~$X$. We define a functor $\OK^+\colon\KKcat(X)^+\to\CoSheaf\bigbr{\Open(X)}^+$ by
\[
\OK^+(A) = \bigbr{\OK(A),[1_A]}.
\]
\end{definition}

\begin{corollary}
  \label{cor:unital_classification_of_KXA}
Let~$A$ and~$B$ be unital real-rank-zero Kirchberg $X$\nb-al\-ge\-bras with intermediate cancellation and simple subquotients in the bootstrap class~$\Bootstrap$.
Then every isomorphism $\OK^+(A)\cong\OK^+(B)$ can be lifted to a \Star{}iso\-mor\-phism over~$X$.
\end{corollary}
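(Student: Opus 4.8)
The plan is to derive the unital classification from the stable classification of Theorem~\ref{thm:classification_of_KXA_with_RR0_and_RP} by appealing to the meta theorem \cite{Eilers-Restorff-Ruiz:generalized_meta_theorem}*{Theorem~3.3}, whose role is exactly to trade a strong classification of stable algebras for one of their unital counterparts once the unit class is adjoined to the invariant.

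First I would record the structural input. By Propositions~\ref{pro:characterization_of_KXA_with_vanishing_exponential_maps} and~\ref{pro:characterization_of_KXA_with_vanishing_index_maps}, both~$A$ and~$B$ have vanishing boundary maps, and being nuclear with simple subquotients in~$\Bootstrap$ they lie in~$\Bootstrap(X)$ by \cite{MN:Bootstrap}*{Corollary~4.13}. Passing to stabilizations, $A\otimes\Compacts$ and $B\otimes\Compacts$ are stable real-rank-zero Kirchberg $X$\nb-algebras with simple subquotients in~$\Bootstrap$; and since $\OK(A\otimes\Compacts)\cong\OK(A)$ is a flabby cosheaf (Lemma~\ref{lem:flabby_cosheaf}), Propositions~\ref{pro:characterization_of_KXA_with_vanishing_exponential_maps} and~\ref{pro:characterization_of_KXA_with_vanishing_index_maps} show that real rank zero and intermediate cancellation persist for the stabilizations. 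Thus $\OK$ strongly classifies $A\otimes\Compacts$ and $B\otimes\Compacts$ by Theorem~\ref{thm:classification_of_KXA_with_RR0_and_RP}.

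Next I would carry out the reduction. The pointed invariant $\OK^+$ adjoins the unit class $[1_A]\in\K_0(A)=\OK(A)(X)_0$. A given isomorphism $\OK^+(A)\cong\OK^+(B)$ restricts to an isomorphism $\OK(A)\cong\OK(B)$; since stabilization is a $\KK(X)$\nb-equivalence, this yields $\OK(A\otimes\Compacts)\cong\OK(B\otimes\Compacts)$, which by the stable classification lifts to a $\Star{}iso\-mor\-phism $\varphi\colon A\otimes\Compacts\to B\otimes\Compacts$ over~$X$ carrying $[1_A]$ to $[1_B]$ on $\K_0$. It then suffices to replace~$\varphi$ by a $\Star{}iso\-mor\-phism over~$X$ taking $1_A\otimes e_{11}$ to $1_B\otimes e_{11}$ (with $e_{11}\in\Compacts$ a rank-one projection), for such a map restricts to a unital $\Star{}iso\-mor\-phism $A\to B$ over~$X$.

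The main obstacle is this last descent, and it is precisely the content that the meta theorem organizes; in verifying its hypotheses the decisive ingredient is Lemma~\ref{lem:K0_of_purely_infinite}. Writing $p\defeq\varphi(1_A\otimes e_{11})$ and $q\defeq 1_B\otimes e_{11}$, both are full projections in the separable purely infinite algebra $B\otimes\Compacts$, which has finite ideal lattice since~$X$ is finite, and $[p]=[q]$ in $\K_0(B\otimes\Compacts)$ because~$\varphi$ preserves the unit class. Lemma~\ref{lem:K0_of_purely_infinite} then gives $p\sim q$; as $B\otimes\Compacts$ is stable, this Murray--von Neumann equivalence is implemented by a unitary~$u$ in the multiplier algebra, and $\mathrm{Ad}(u)$ is an automorphism of $B\otimes\Compacts$ \emph{over}~$X$ since conjugation by a multiplier unitary preserves every ideal. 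Thus $\mathrm{Ad}(u)\circ\varphi$ sends $1_A\otimes e_{11}$ to $1_B\otimes e_{11}$ and restricts to the desired unital $\Star{}iso\-mor\-phism over~$X$. The one point requiring care throughout is that every step stays within morphisms over~$X$, which holds because both~$\varphi$ and the inner automorphism $\mathrm{Ad}(u)$ are morphisms over~$X$.
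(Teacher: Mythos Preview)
Your proof is correct and matches the paper's approach: deduce the unital case from the strong stable classification in Theorem~\ref{thm:classification_of_KXA_with_RR0_and_RP} via the meta theorem \cite{Eilers-Restorff-Ruiz:generalized_meta_theorem}*{Theorem~3.3}. Where the paper simply cites the meta theorem, you additionally unpack the descent argument using Lemma~\ref{lem:K0_of_purely_infinite} and the standard fact that in a $\sigma$\nb-unital stable \Cstar{}algebra Murray--von~Neumann equivalent projections are unitarily equivalent in the multiplier algebra.
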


\begin{proof}
This is a consequence of the strong stable classification result in Theorem~\ref{thm:classification_of_KXA_with_RR0_and_RP} using the so-called meta theorem \cite{Eilers-Restorff-Ruiz:generalized_meta_theorem}*{Theorem~3.3}.
\end{proof}

\section{Cosheaves arising as invariants of graph \texorpdfstring{$C^*$}{C*}-algebras}
	\label{sec:range_on_graph_algebras}

In this section, we provide range results for the invariants~$\OK$ and~$\OK^+$ on purely infinite tight graph \Cstar{}algebra over~$X$ with intermediate cancellation. For definitions and general facts concerning graph \Cstar{}al\-ge\-bras we refer to~\cite{Raeburn}. The Cuntz--Krieger algebras introduced in \cites{Cuntz--Krieger:A_class,Cuntz:A_class_II} are in particular unital graph \Cstar{}al\-ge\-bras; when using the word Cuntz--Krieger algebra we implicitly assume that the underlying square matrix satisfies Cuntz's condition~(II), which ensures that the algebra is purely infinite.

\begin{definition}
A \emph{tight graph \Cstar{}algebra over~$X$} is a graph \Cstar{}algebra $C^*(E)$ equipped with a homeomorphism $\Prim\bigbr{C^*(E)}\to X$. A \emph{tight Cuntz--Krieger algebra over~$X$} is defined analogously.
\end{definition}

We point out that a purely infinite tight graph \Cstar{}algebra over~$X$ is in particular a real-rank-zero Kirchberg $X$\nb-al\-ge\-bras with simple subquotients in the bootstrap class~$\Bootstrap$ (see \cite{Raeburn}*{Remark 4.3} and \cite{hongszymanski}*{\S2}). Hence the classification results in the previous section apply to purely infinite tight graph \Cstar{}algebras over~$X$ with intermediate cancellation. We obtain the following corollary.

\begin{corollary}
Let~$A$ and~$B$ be purely infinite tight graph \Cstar{}al\-ge\-bras over~$X$ with intermediate cancellation. If $\OK(A)\cong\OK(B)$, then~$A$ is stably isomorphic to~$B$. If~$A$ and~$B$ are unital and $\OK^+(A)\cong\OK^+(B)$, then~$A$ is isomorphic to~$B$.
\end{corollary}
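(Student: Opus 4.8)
The plan is to reduce both assertions to the classification results of the previous section. The crucial observation, already recorded in the remark preceding the corollary, is that a purely infinite tight \Cstar{}algebra over~$X$ is automatically a real-rank-zero Kirchberg $X$\nb-algebra with simple subquotients in the bootstrap class~$\Bootstrap$; the one hypothesis we impose by hand is intermediate cancellation. Thus~$A$ and~$B$ already satisfy all the standing assumptions of Theorem~\ref{thm:classification_of_KXA_with_RR0_and_RP} and Corollary~\ref{cor:unital_classification_of_KXA}, save possibly for stability in the first case.

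For the \emph{unital} statement I would simply invoke Corollary~\ref{cor:unital_classification_of_KXA}: since~$A$ and~$B$ are unital real-rank-zero Kirchberg $X$\nb-algebras with intermediate cancellation and simple subquotients in~$\Bootstrap$, any isomorphism $\OK^+(A)\cong\OK^+(B)$ lifts to a \Star{}isomorphism over~$X$, which in particular is a \Star{}isomorphism of the underlying \Cstar{}algebras. No further work is needed.

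For the \emph{stable} statement the only gap is that Theorem~\ref{thm:classification_of_KXA_with_RR0_and_RP} is stated for \emph{stable} algebras, whereas graph \Cstar{}algebras need not be stable. I would therefore pass to the stabilizations $A\otimes\Compacts$ and $B\otimes\Compacts$. These remain tight, nuclear, purely infinite and separable, hence are stable Kirchberg $X$\nb-algebras, and their simple subquotients $A(\{x\})\otimes\Compacts$ still lie in~$\Bootstrap$, as the bootstrap class is closed under stabilization. To see that real rank zero and intermediate cancellation persist, I would argue through vanishing boundary maps: by Propositions~\ref{pro:characterization_of_KXA_with_vanishing_exponential_maps} and~\ref{pro:characterization_of_KXA_with_vanishing_index_maps}, the algebra~$A$ has vanishing boundary maps; since these conditions concern only the index and exponential maps of the relevant six-term sequences, which are identified with those of $A\otimes\Compacts$ via the canonical stability isomorphisms in $\K$\nb-theory, the stabilization $A\otimes\Compacts$ again has vanishing boundary maps. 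Applying the same two propositions in reverse, $A\otimes\Compacts$ has real rank zero and intermediate cancellation, and likewise for~$B$.

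Finally, since~$\OK$ is a functor on~$\KKcat(X)$ and~$A$ is $\KK(X)$\nb-equivalent to $A\otimes\Compacts$, the hypothesis $\OK(A)\cong\OK(B)$ yields $\OK(A\otimes\Compacts)\cong\OK(B\otimes\Compacts)$. Applying Theorem~\ref{thm:classification_of_KXA_with_RR0_and_RP} to the stabilizations lifts this to a \Star{}isomorphism $A\otimes\Compacts\cong B\otimes\Compacts$ over~$X$; forgetting the $X$\nb-structure gives the desired stable isomorphism of~$A$ and~$B$. I do not expect a genuine obstacle here: the argument is entirely a matter of checking that stabilization preserves the relevant hypotheses, the single point deserving a word being the stability-invariance of the vanishing-boundary-map conditions, which follows formally from the stability of $\K$\nb-theory.
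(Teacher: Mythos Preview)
Your proposal is correct and follows exactly the route the paper intends: the corollary is stated in the paper without a separate proof, the justification being the preceding sentence that purely infinite tight graph \Cstar{}algebras over~$X$ are automatically real-rank-zero Kirchberg $X$\nb-algebras with simple subquotients in~$\Bootstrap$, so that Theorem~\ref{thm:classification_of_KXA_with_RR0_and_RP} and Corollary~\ref{cor:unital_classification_of_KXA} apply directly. Your careful verification that stabilization preserves the hypotheses of Theorem~\ref{thm:classification_of_KXA_with_RR0_and_RP} (via the stability of the vanishing-boundary-map conditions) makes explicit a step the paper leaves implicit, but this is elaboration rather than a different argument.
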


It is now natural to ask which (pointed) cosheaves arise as the invariant of a (unital) purely infinite tight graph \Cstar{}al\-ge\-bra over~$X$ with intermediate cancellation.

\begin{definition}
  \label{def:properties_of_flabby_cosheaves}
A flabby cosheaf~$M$ on $\Open(X)$ is said to have \emph{free quotients in odd degree} if the quotient $M(V)_1/M(U)_1$ is free for all open subsets $U\subseteq V\subseteq X$. We say that~$M$ has \emph{finite ordered ranks} if, for all $U\in\Open(X)$,
\[
 \rank M(U)_1\leq\rank M(U)_0 < \infty.
\]
Similarly, we say that~$M$ has \emph{finite equal ranks} if
$\rank M(U)_1 = \rank M(U)_0 < \infty$
for all $U\in\Open(X)$.
A pointed cosheaf is called \emph{flabby} if the underlying cosheaf is flabby. A flabby pointed cosheaf has one of the three properties above if this is the case for the underlying cosheaf.
\end{definition}

We will use the invariant $\FK_\Catgunnar$ for \Cstar{}algebras over~$X$ from~\cite{Arklint-Bentmann-Katsura:Reduction}.

\begin{definition}[\cite{Arklint-Bentmann-Katsura:Reduction}*{Definition~6.1}]
An $\Catgunnar$-module~$N$ is a collection of Abelian groups $N(\{x\})_1$, $N(U_x)_0$ and $N(U_x\setminus\{x\})_0$ for $x\in X$ together with group homomorphisms $\delta_{\{x\}}^{U_x\setminus\{x\}}\colon N(\{x\})_1\to N(U_x\setminus\{x\})_0$ and $i_{U_x\setminus\{x\}}^{U_x}\colon N(U_x\setminus\{x\})_0\to N(U_x)_0$ for $x\in X$ and $i_{U_y}^{U_x\setminus\{x\}}\colon N(U_y)_0\to N(U_x\setminus\{x\})_0$ for all pairs $(x,y)$ with $y\to x$ such that certain relations are fulfilled. A homomorphism of $\Catgunnar$-modules is a collection of group homomorphisms making all squares commute.
\end{definition}

There is a notion of \emph{exactness} for $\Catgunnar$-modules (see \cite{Arklint-Bentmann-Katsura:Reduction}*{Definition~6.5}) and our notation suggests an obvious $\K$\nb-the\-o\-ret\-i\-cal functor $\FK_\Catgunnar$ from $\KKcat(X)$ to exact $\Catgunnar$-modules (see \cite{Arklint-Bentmann-Katsura:Reduction}*{Definition~6.4 and Corollary~6.9}). Notice that, for $U_x\subseteq U_y$, we can obtain the map $\K_0\bigbr{A(U_x)}\to\K_0\bigbr{A(U_y)}$ by composing maps that are part of the invariant~$\FK_\Catgunnar(A)$.

\begin{theorem}
  \label{thm:stable_classification_of_graph_algs}
A flabby cosheaf on~$\Open(X)$ is isomorphic to $\OK\bigbr{C^*(E)}$ for some purely infinite tight graph \Cstar{}algebra $C^*(E)$ over~$X$ with intermediate cancellation if and only if it has free quotients in odd degree.
\end{theorem}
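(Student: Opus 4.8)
The plan is to prove the two implications separately. Necessity is a structural fact about graph \Cstar{}algebras, whereas sufficiency is a realization result built on the invariant~$\FK_\Catgunnar$ and the range theorem of~\cite{Arklint-Bentmann-Katsura:Reduction}. For necessity, suppose $M\cong\OK\bigbr{C^*(E)}$ for a purely infinite tight graph \Cstar{}algebra $C^*(E)$ over~$X$ with intermediate cancellation. Being purely infinite, $C^*(E)$ is a real-rank-zero Kirchberg $X$\nb-algebra, so by Propositions~\ref{pro:characterization_of_KXA_with_vanishing_exponential_maps} and~\ref{pro:characterization_of_KXA_with_vanishing_index_maps} it has vanishing boundary maps. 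Fix open subsets $U\subseteq V\subseteq X$. The distinguished subquotient $C^*(E)(V\setminus U)$ is a gauge-invariant subquotient of a graph \Cstar{}algebra and hence is itself a graph \Cstar{}algebra, so its $\K_1$\nb-group is free. Since the boundary maps vanish, the six-term sequence~\eqref{eq:six-term_sequence} for $C^*(E)(U)\into C^*(E)(V)\prto C^*(E)(V\setminus U)$ yields a short exact sequence $0\to M(U)_1\to M(V)_1\to\K_1\bigbr{C^*(E)(V\setminus U)}\to 0$, so $M(V)_1/M(U)_1$ is free. Thus $M$ has free quotients in odd degree.

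For sufficiency, let $M$ be a flabby cosheaf with free quotients in odd degree. First I would encode~$M$ as an $\Catgunnar$\nb-module~$N$ in the sense of~\cite{Arklint-Bentmann-Katsura:Reduction}*{Definition~6.1}. As $U_x\setminus\{x\}$ is open in~$X$, I set $N(U_x)_0=M(U_x)_0$ and $N(U_x\setminus\{x\})_0=M(U_x\setminus\{x\})_0$, with $i_{U_x\setminus\{x\}}^{U_x}$ and the maps $i_{U_y}^{U_x\setminus\{x\}}$ (for $y\to x$) given by the structure maps of the cosheaf~$M$. For the odd part I set $N(\{x\})_1=M(U_x)_1/M(U_x\setminus\{x\})_1$, which is \emph{free} by the free-quotients hypothesis, and I let all boundary maps $\delta_{\{x\}}^{U_x\setminus\{x\}}$ be zero. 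The verification that~$N$ is an \emph{exact} $\Catgunnar$\nb-module in the sense of~\cite{Arklint-Bentmann-Katsura:Reduction}*{Definition~6.5} is where the cosheaf condition and flabbiness of~$M$ enter: injectivity of the structure maps gives exactness at the $0$\nb-entries, while the cosheaf (Mayer--Vietoris) condition together with $\delta=0$ supplies the remaining exactness relations.

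Next I would invoke the range theorem for graph \Cstar{}algebras from~\cite{Arklint-Bentmann-Katsura:Reduction}: because each $N(\{x\})_1$ is free, the exact $\Catgunnar$\nb-module~$N$ is realized as $\FK_\Catgunnar\bigbr{C^*(E)}$ by a purely infinite tight graph \Cstar{}algebra $C^*(E)$ over~$X$. Since the $\delta_{\{x\}}^{U_x\setminus\{x\}}$ are precisely the index maps of the extensions $C^*(E)(U_x\setminus\{x\})\into C^*(E)(U_x)\prto C^*(E)(\{x\})$, their vanishing shows via Corollary~\ref{cor:vanishing_boundary_maps_criterion} that $C^*(E)$ has vanishing index maps; being a purely infinite graph algebra it has real rank zero, hence vanishing exponential maps by Proposition~\ref{pro:characterization_of_KXA_with_vanishing_exponential_maps}. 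Thus $C^*(E)$ has vanishing boundary maps, so it has intermediate cancellation by Proposition~\ref{pro:characterization_of_KXA_with_vanishing_index_maps} and $\OK\bigbr{C^*(E)}$ is a flabby cosheaf by Lemma~\ref{lem:flabby_cosheaf}. To identify it with~$M$, I use the natural isomorphism $\Colim\circ\XK\cong\OK$ on algebras with vanishing boundary maps together with Lemma~\ref{lem:res_and_colim_inverse}: it then suffices to exhibit an isomorphism $\XK\bigbr{C^*(E)}\cong\Res(M)$ of representations. In even degree this is immediate from $N(U_x)_0=M(U_x)_0$; in odd degree, vanishing of the boundary maps means that $\K_1\bigbr{C^*(E)(U_x)}$ is assembled by the same iterated extensions from the groups $N(\{y\})_1$ (for $y\in U_x$) that assemble $M(U_x)_1$ from the quotients $M(U_y)_1/M(U_y\setminus\{y\})_1$, and these assembly data are exactly the part of~$N$ coming from~$M$.

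I expect the main obstacle to be this passage between the cosheaf~$M$ and the $\Catgunnar$\nb-module~$N$. On one side one must check that the cosheaf and flabbiness axioms translate precisely into the exactness axioms of~\cite{Arklint-Bentmann-Katsura:Reduction}*{Definition~6.5}; on the other side, since $\FK_\Catgunnar$ records $\K_1$ only for the simple subquotients, one must reconstruct the odd part of~$\XK$ from~$N$ and verify that it reproduces~$M$ with its extension structure intact. Matching the freeness hypothesis of the range theorem of~\cite{Arklint-Bentmann-Katsura:Reduction} with the condition ``free quotients in odd degree'' is the conceptual heart of the argument.
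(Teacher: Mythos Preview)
Your proposal is correct and follows the same route as the paper: necessity via the fact that gauge-invariant subquotients of graph \Cstar{}algebras are graph \Cstar{}algebras (hence have free~$\K_1$), and sufficiency by encoding~$M$ as an exact $\Catgunnar$\nb-module~$N$ with $\delta=0$ and invoking \cite{Arklint-Bentmann-Katsura:Reduction}*{Theorem~8.2}. Your derivation of vanishing index maps directly from $\delta=0$ via Corollary~\ref{cor:vanishing_boundary_maps_criterion} is a clean shortcut; the paper instead first identifies $\OK\bigbr{C^*(E)}_0\cong M_0$ as cosheaves and then reads off injectivity from flabbiness of~$M$.

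The one place your argument should be tightened is the odd-degree identification. Your phrase ``assembled by the same iterated extensions \ldots{} and these assembly data are exactly the part of~$N$ coming from~$M$'' does not quite close the gap: the $\Catgunnar$\nb-module~$N$ records only the subquotients $N(\{y\})_1$, not the $\Ext^1$\nb-classes of the successive extensions, so there is no ``assembly data in~$N$'' to match. What actually makes the two towers agree is that every extension in sight \emph{splits}, because each quotient $N(\{y\})_1$ is free; hence both $\K_1\bigbr{C^*(E)(U)}$ and $M(U)_1$ are canonically identified with $\bigoplus_{y\in U} N(\{y\})_1$, with the structure maps becoming the evident inclusions of summands. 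This is exactly how the paper argues, and it is the concrete content behind your remark that ``matching the freeness hypothesis \ldots{} is the conceptual heart.''
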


\begin{proof}
It is well-known that graph \Cstar{}algebras have free $\K_1$\nb-groups. Since (gauge-invariant) ideals in graph \Cstar{}algebras are themselves graph \Cstar{}algebras by~\cite{Ruiz-Tomforde:Ideals}, it follows that $\OK\bigbr{C^*(E)}$ has free quotients in odd degree if $C^*(E)$ is a purely infinite tight graph \Cstar{}algebra over~$X$.

Conversely, let~$M$ be a flabby cosheaf on~$\Open(X)$ that has free quotients in odd degree. We associate to~$M$ an $\Catgunnar$-module~$N$ in the following way: for $x\in X$, set $N(U_x)_0=M(U_x)_0$, $N(U_x\setminus\{x\})_0=M(U_x\setminus\{x\})_0$ and let $N(\{x\})_1$ be the quotient of $M(U_x)_1$ by $M(U_x\setminus\{x\})_1$. The maps $i_{U_x\setminus\{x\}}^{U_x}$ and $i_{U_y}^{U_x\setminus\{x\}}$ for~$N$ are defined to be the even parts of the identically denoted maps for~$M$. The homomorphisms $\delta_{\{x\}}^{U_x\setminus\{x\}}$ are defined to be the zero homomorphisms.

To check that this really defines an $\Catgunnar$-module, one has to verify the relations (6.2) and (6.3) in~\cite{Arklint-Bentmann-Katsura:Reduction}. This is straight-forward: the relation (6.2) is fulfilled because we have defined the maps $\delta_{\{x\}}^{U_x\setminus\{x\}}$ as zero maps; the relation (6.3) follows from the fact that the composition $M(U)\xrightarrow{i_U^V} M(V)\xrightarrow{i_V^W} M(W)$ is equal to $M(U)\xrightarrow{i_U^W} M(W)$ for all open subsets $U\subseteq V\subseteq W\subseteq X$. We observe that the $\Catgunnar$-module~$N$ is exact: the exactness of the sequence (6.7) in~\cite{Arklint-Bentmann-Katsura:Reduction} follows from the fact that~$M$ is a cosheaf; the sequence (6.6) in~\cite{Arklint-Bentmann-Katsura:Reduction} is exact because~$M$ is flabby.

Since~$M$ has free quotients in odd degree, \cite{Arklint-Bentmann-Katsura:Reduction}*{Theorem~8.2} implies that there is a purely infinite tight graph \Cstar{}algebra $C^*(E)$ over~$X$ such that $\FK_\Catgunnar\bigbr{C^*(E)}\cong N$. Since~$C^*(E)$ has real rank zero, it has vanishing exponential maps, so that the $\K_0$-groups of its ideals form an (ungraded) cosheaf on~$\Open(X)$. This cosheaf coincides with the even part of~$M$ on the basis of minimal open neighbourhoods of points. Since cosheaves are determined by their restriction to a basis, the (ungraded) cosheaves~$M_0$ and $\OK\bigbr{C^*(E)}_0$ are isomorphic. Since~$M$ is flabby this shows that $C^*(E)$ has vanishing index maps and therefore intermediate cancellation.

Exploiting freeness of the $\K_1$-groups and vanishing of boundary maps, we obtain isomorphisms
\[
\K_1\bigbr{C^*(E)(U)}\cong\bigoplus_{x\in U}\K_1\bigbr{C^*(E)(\{x\})}
\]
for all open subsets $U\subseteq X$ such that, under this identification, the homomorphisms induced by the ideal inclusions correspond to the obvious subgroup inclusions. Analogously, we have isomorphisms $M(U)_1\cong\bigoplus_{x\in U} N(\{x\})_1$ for all open subsets $U\subseteq X$ because~$M$ is flabby and has free quotients in odd degree. Hence $\OK\bigbr{C^*(E)}_1\cong M_1$. It follows that $\OK\bigbr{C^*(E)}\cong M$ as desired.
\end{proof}

For the proof of the next result, we need to recall that there is a notion of \emph{pointed} $\Catgunnar$-module (see \cite{Arklint-Bentmann-Katsura:Reduction}*{Definition~9.1}) and a functor $\FK_\Catgunnar^+$ from $\KKcat(X)^+$ to pointed $\Catgunnar$-modules.

\begin{theorem}
  \label{thm:unital_classification_of_graph_algs}
A flabby pointed cosheaf on~$\Open(X)$ is isomorphic to $\OK^+\bigbr{C^*(E)}$ for some unital purely infinite tight graph \Cstar{}algebra $C^*(E)$ over~$X$ with intermediate cancellation if and only if it has free quotients in odd degree and finite ordered ranks.
\end{theorem}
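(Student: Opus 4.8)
The plan is to mirror the proof of Theorem~\ref{thm:stable_classification_of_graph_algs}, feeding the two extra pieces of data---the distinguished element and the rank condition---into the \emph{pointed} version of the realization machinery from~\cite{Arklint-Bentmann-Katsura:Reduction}. I would treat the two implications separately, with the bulk of the work lying in the ``if'' direction.

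First I would settle necessity. Let $C^*(E)$ be a unital purely infinite tight \Cstar{}algebra over~$X$ with intermediate cancellation. That the underlying cosheaf $\OK\bigbr{C^*(E)}$ has free quotients in odd degree is already part of Theorem~\ref{thm:stable_classification_of_graph_algs}. For the new condition, recall that $C^*(E)$ is unital precisely when the vertex set $E^0$ is finite; hence the \(\K\)-groups of $C^*(E)$ and, by~\cite{Ruiz-Tomforde:Ideals}, of each of its gauge-invariant ideals are finitely generated, so all ranks occurring in $\OK\bigbr{C^*(E)}$ are finite. The inequality $\rank M(U)_1\le\rank M(U)_0$ then follows from the standard description of the \(\K\)-theory of a graph \Cstar{}algebra: $\K_1$ and $\K_0$ are the kernel and cokernel of the integer matrix $A_E^t-I$ whose columns are indexed by the regular vertices and whose rows are indexed by all of~$E^0$. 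Since the regular vertices form a subset of~$E^0$, the rank of the kernel is bounded by that of the cokernel; the same applies to every ideal because ideals of graph \Cstar{}algebras are themselves graph \Cstar{}algebras. Thus $\OK^+\bigbr{C^*(E)}$ has free quotients in odd degree and finite ordered ranks.

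For sufficiency, let~$M$ be a flabby pointed cosheaf with free quotients in odd degree and finite ordered ranks, with distinguished element $m\in M(X)_0$. I would first build the exact $\Catgunnar$-module~$N$ attached to the underlying cosheaf exactly as in the proof of Theorem~\ref{thm:stable_classification_of_graph_algs}. The new ingredient is to promote~$N$ to a \emph{pointed} $\Catgunnar$-module in the sense of~\cite{Arklint-Bentmann-Katsura:Reduction}*{Definition~9.1}: since $M$ is a cosheaf, the group $M(X)_0$ is the colimit of the even groups $N(U_x)_0=M(U_x)_0$ along the ideal inclusions, so~$m$ determines a well-defined distinguished element of~$N$. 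The hypotheses on~$M$ are exactly what the unital range result of~\cite{Arklint-Bentmann-Katsura:Reduction} requires: free quotients in odd degree guarantee realizability, while finite ordered ranks are the numerical condition allowing the distinguished element to be realized by the unit of a \emph{finite} graph. Applying that result produces a unital purely infinite tight graph \Cstar{}algebra $C^*(E)$ over~$X$ with $\FK_\Catgunnar^+\bigbr{C^*(E)}\cong N$ as pointed $\Catgunnar$-modules; as in the stable case, $C^*(E)$ has real rank zero and hence vanishing exponential maps by Proposition~\ref{pro:characterization_of_KXA_with_vanishing_exponential_maps}, and since~$M$ is flabby it has vanishing index maps by Corollary~\ref{cor:vanishing_boundary_maps_criterion}, so intermediate cancellation by Proposition~\ref{pro:characterization_of_KXA_with_vanishing_index_maps}.

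It then remains to recover~$M$ on the nose. Arguing as in the proof of Theorem~\ref{thm:stable_classification_of_graph_algs}, the even part $\OK\bigbr{C^*(E)}_0$ agrees with $M_0$ on the basis $\{U_x\}$ and therefore everywhere, while freeness of the \(\K_1\)-groups together with flabbiness and free quotients in odd degree identifies $\OK\bigbr{C^*(E)}_1$ with $M_1$; thus $\OK\bigbr{C^*(E)}\cong M$ as cosheaves. Because $\FK_\Catgunnar^+$ was matched \emph{as pointed} modules, the unit class $[1_{C^*(E)}]$ corresponds to~$m$ under this isomorphism, so $\OK^+\bigbr{C^*(E)}\cong M$ as pointed cosheaves. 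The main obstacle I anticipate is the bookkeeping for the base point: one must check that the distinguished element survives unchanged through the passage $M\rightsquigarrow N$, through the realization isomorphism $\FK_\Catgunnar^+\bigbr{C^*(E)}\cong N$, and back through $\OK^+$, and that the finite-ordered-ranks hypothesis matches the precise form in which~\cite{Arklint-Bentmann-Katsura:Reduction} phrases its unital range condition---this is exactly where the inequality $\rank_1\le\rank_0$ does real work, since it is what permits fitting the prescribed order unit into a finite vertex set.
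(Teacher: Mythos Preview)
Your proposal is correct and follows essentially the same route as the paper's proof: both directions are handled by reducing to the pointed realization result \cite{Arklint-Bentmann-Katsura:Reduction}*{Theorem~9.11} via the same construction of the exact $\Catgunnar$-module~$N$ used in Theorem~\ref{thm:stable_classification_of_graph_algs}, now equipped with the distinguished element coming from~$m$, and the reconstruction of $\OK^+\bigbr{C^*(E)}$ proceeds exactly as in the stable case. Your necessity argument is somewhat more explicit than the paper's (which simply appeals to ``well-known formulas''), and your closing caveat about matching the finite-ordered-ranks hypothesis to the precise formulation in~\cite{Arklint-Bentmann-Katsura:Reduction} is well placed but does not indicate any genuine gap.
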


\begin{proof}
Again, the well-known formulas for the $\K$\nb-theory of graph \Cstar{}algebras show that $\OK\bigbr{C^*(E)}$ has free quotients in odd degree and finite ordered ranks if $C^*(E)$ is a unital purely infinite tight graph \Cstar{}algebra over~$X$. Conversely, to a given flabby pointed cosheaf~$(M,m)$ we associate an exact pointed $\Catgunnar$-module~$(N,n)$ as in the previous proof. Our assumptions on~$M$ then guarantee that we can apply \cite{Arklint-Bentmann-Katsura:Reduction}*{Theorem~9.11} to obtain a unital purely infinite tight graph \Cstar{}algebra $C^*(E)$ over~$X$ such that there is an isomorphism of pointed $\Catgunnar$-modules $\FK_\Catgunnar^+\bigbr{C^*(E)}\cong (N,n)$. An argument as in the previous proof shows that $\OK^+\bigbr{C^*(E)}\cong (M,m)$ and that $C^*(E)$ has intermediate cancellation.
\end{proof}

\begin{theorem}
  \label{thm:unital_classification_of_CK_algs}
A flabby pointed cosheaf on~$\Open(X)$ is isomorphic to $\OK^+(\Cuntz_A)$ for some tight Cuntz--Krieger algebra $\Cuntz_A$ over~$X$ with intermediate cancellation if and only if it has free quotients in odd degree and finite equal ranks.
\end{theorem}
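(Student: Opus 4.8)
The plan is to mirror the proof of Theorem~\ref{thm:unital_classification_of_graph_algs} almost verbatim, the only genuine change being that the hypothesis \emph{finite ordered ranks} is replaced by \emph{finite equal ranks} and that the realization input from \cite{Arklint-Bentmann-Katsura:Reduction} is the Cuntz--Krieger version rather than the general unital graph-algebra version. The passage from ``ordered'' to ``equal'' reflects exactly the passage from finite graphs with no sinks to finite graphs with no sinks \emph{and} no sources, i.e.\ to a \emph{square} defining matrix: for a Cuntz--Krieger algebra one has $\K_0(\Cuntz_A)\cong\coker(I-A^t)$ and $\K_1(\Cuntz_A)\cong\ker(I-A^t)$ with $A$ square, so that $\rank\K_0(\Cuntz_A)=\rank\K_1(\Cuntz_A)$ by the elementary identity $\rank\coker(I-A^t)=\rank\ker(I-A^t)$.

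For the ``only if'' direction, flabbiness of $\OK^+(\Cuntz_A)$ and free quotients in odd degree are obtained exactly as in the graph-algebra proofs, via Propositions~\ref{pro:characterization_of_KXA_with_vanishing_exponential_maps} and~\ref{pro:characterization_of_KXA_with_vanishing_index_maps} together with the freeness of $\K_1$ for graph algebras. The new point is \emph{finite equal ranks}. Here I would set $\chi(B)=\rank\K_0(B)-\rank\K_1(B)$ and use that $\chi$ is additive along the six-term exact sequence, since the alternating sum of ranks around an exact cyclic six-term sequence vanishes. Running through a filtration of a given open $U$ by distinguished ideals with singleton strata yields $\chi\bigbr{\Cuntz_A(U)}=\sum_{x\in U}\chi\bigbr{\Cuntz_A(\{x\})}$. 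Each simple subquotient $\Cuntz_A(\{x\})$ arises from a strongly connected component of the finite graph and, condition~(II) forcing pure infiniteness, is a simple Cuntz--Krieger algebra, so $\chi\bigbr{\Cuntz_A(\{x\})}=0$ by the square-matrix computation above. Hence $\chi\bigbr{\Cuntz_A(U)}=0$, that is $\rank M(U)_0=\rank M(U)_1$; finiteness is automatic because the finite graph gives finitely generated $\K$-theory.

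For the ``if'' direction, I would associate to the given flabby pointed cosheaf $(M,m)$ the exact pointed $\Catgunnar$-module $(N,n)$ exactly as in the proof of Theorem~\ref{thm:unital_classification_of_graph_algs}. The hypotheses \emph{free quotients in odd degree} and \emph{finite equal ranks} on~$M$ should translate into precisely the conditions on $(N,n)$ needed to invoke the Cuntz--Krieger realization result of~\cite{Arklint-Bentmann-Katsura:Reduction} (the pointed analogue of the unital graph-algebra statement used above), producing a tight Cuntz--Krieger algebra $\Cuntz_A$ over~$X$ with an isomorphism of pointed $\Catgunnar$-modules $\FK_\Catgunnar^+\bigbr{\Cuntz_A}\cong(N,n)$. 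The remaining identifications $\OK^+\bigbr{\Cuntz_A}\cong(M,m)$ and the verification of intermediate cancellation are then carried out word for word as in that earlier proof.

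The main obstacle I anticipate is bookkeeping on the realization side: one must confirm that \emph{finite equal ranks} is exactly the hypothesis that $(N,n)$ must satisfy so that the Cuntz--Krieger realization theorem of~\cite{Arklint-Bentmann-Katsura:Reduction} applies, i.e.\ that the equal-ranks condition corresponds to the defining matrix being square with no sinks and no sources. The identity $\rank\coker(I-A^t)=\rank\ker(I-A^t)$ is what makes ``equal'' the correct analogue of ``ordered,'' and checking that no further constraints are hidden in the cosheaf-to-$\Catgunnar$-module translation is the only delicate step.
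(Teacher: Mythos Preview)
Your proposal is correct and follows essentially the same approach as the paper: the ``if'' direction proceeds exactly as you describe, with the paper confirming that finite equal ranks is precisely the additional condition in \cite{Arklint-Bentmann-Katsura:Reduction}*{Theorem~8.2} guaranteeing that the graph~$E$ from the previous proof can be chosen finite (and it has no sinks or sources by construction). For the ``only if'' direction the paper is terser, simply invoking ``the $\K$-theory formulas for graph \Cstar{}algebras'' to deduce finite equal ranks, whereas your Euler-characteristic argument via simple subquotients is a more explicit route to the same conclusion.
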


\begin{proof}
The $\K$\nb-theory formulas for graph \Cstar{}algebras imply that the cosheaf $\OK(\Cuntz_A)$ has finite equal degrees if~$\Cuntz_A$ is a tight Cuntz--Krieger algebra over~$X$. Conversely, $\OK(\Cuntz_A)$ having finite equal ranks implies that $\FK_\Catgunnar(A)$ meets the additional conditions in \cite{Arklint-Bentmann-Katsura:Reduction}*{Theorem~8.2} that guarantee that the graph~$E$ in the previous proof can be chosen finite (it has no sinks or sources by construction).
\end{proof}

\section{Extensions of Cuntz--Krieger algebras}

In this section, we establish a permanence property of Cuntz--Krieger algebras with intermediate cancellation with respect to extensions.

\begin{definition}[\cite{Arklint:PhantomCK}*{Definition~1.1}]
	\label{def:look_like_a_CKA}
A \Cstar{}algebra~$A$ over~$X$ \emph{looks like a Cuntz--Krieger algebra} if~$A$ is a unital real-rank-zero Kirchberg $X$-algebra with simple subquotients in the bootstrap class~$\Bootstrap$ such that, for all $x\in X$, the group $\K_1\bigbr{A(\{x\})}$ is free and $\rank\K_0\bigbr{A(\{x\})}=\rank\K_1\bigbr{A(\{x\})}<\infty$.

A \Cstar{}algebra~$A$ over~$X$ that satisfies these conditions but is stable rather than unital is said to \emph{look like a stabilized Cuntz--Krieger algebra}.
\end{definition}

The following result generalizes the observation in~\cite{Arklint:PhantomCK}*{Corollary 2.4}, which is concerned with Cuntz--Krieger algebras with trivial $\K$\nb-theory.

\begin{corollary}
	\label{cor:phantoms_with_intermediate_cancellation}
Let~$A$ be a \Cstar{}algebra over~$X$ that looks like a Cuntz--Krieger algebra and has intermediate cancellation. Then~$A$ is \Star{}isomorphic over~$X$ to a tight Cuntz--Krieger algebra over~$X$ with intermediate cancellation.
\end{corollary}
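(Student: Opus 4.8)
The plan is to show that the invariant $\OK^+(A)$ is a flabby pointed cosheaf with free quotients in odd degree and finite equal ranks, and then to feed it through the range theorem for Cuntz--Krieger algebras followed by the unital classification result. Concretely, Theorem~\ref{thm:unital_classification_of_CK_algs} will then produce a tight Cuntz--Krieger algebra $\Cuntz_A$ over~$X$ with intermediate cancellation together with an isomorphism $\OK^+(\Cuntz_A)\cong\OK^+(A)$, and Corollary~\ref{cor:unital_classification_of_KXA} will lift this isomorphism to a \Star{}isomorphism over~$X$.

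First I would record that~$A$ has vanishing boundary maps: being a unital real-rank-zero Kirchberg $X$-algebra with intermediate cancellation and simple subquotients in~$\Bootstrap$, this is exactly the content of Propositions~\ref{pro:characterization_of_KXA_with_vanishing_exponential_maps} and~\ref{pro:characterization_of_KXA_with_vanishing_index_maps}. Consequently $\OK(A)$ is a flabby cosheaf by Lemma~\ref{lem:flabby_cosheaf}, so $\OK^+(A)=(\OK(A),[1_A])$ is a flabby pointed cosheaf.

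The two remaining conditions I would obtain by a point-by-point induction exploiting the vanishing boundary maps. For a locally closed $Y\subseteq X$, choosing a relatively closed point $x\in Y$ and setting $U=Y\setminus\{x\}$, Lemma~\ref{lem:all_boundary_maps} yields a short exact sequence $0\to\K_1\bigl(A(U)\bigr)\to\K_1\bigl(A(Y)\bigr)\to\K_1\bigl(A(\{x\})\bigr)\to 0$. Since $\K_1\bigl(A(\{x\})\bigr)$ is free by the hypothesis that~$A$ looks like a Cuntz--Krieger algebra, this sequence splits, and an induction on the cardinality of~$Y$ shows that $\K_1\bigl(A(Y)\bigr)$ is free for every locally closed~$Y$. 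For open $U\subseteq V$ the six-term sequence then identifies $\OK(A)(V)_1/\OK(A)(U)_1$ with $\K_1\bigl(A(V\setminus U)\bigr)$, which is free; this is free quotients in odd degree. Additivity of rank along the same filtration gives $\rank\K_i\bigl(A(U)\bigr)=\sum_{x\in U}\rank\K_i\bigl(A(\{x\})\bigr)$ for open~$U$, and the fibrewise equality $\rank\K_0\bigl(A(\{x\})\bigr)=\rank\K_1\bigl(A(\{x\})\bigr)<\infty$ then delivers finite equal ranks.

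With these three properties verified, Theorem~\ref{thm:unital_classification_of_CK_algs} and Corollary~\ref{cor:unital_classification_of_KXA} combine as described to finish the proof; note that $\Cuntz_A$ itself qualifies as a unital real-rank-zero Kirchberg $X$-algebra with simple subquotients in~$\Bootstrap$ and intermediate cancellation (by the remarks opening Section~\ref{sec:range_on_graph_algebras}), so the hypotheses of the unital classification are met. I do not expect a serious obstacle here: the analytic content has already been isolated in the range theorem and the classification theorem, and the only point requiring care is the induction propagating fibrewise freeness and rank equality to all locally closed subquotients---which is immediate once vanishing of the boundary maps is in place.
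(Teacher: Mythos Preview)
Your proposal is correct and follows essentially the same approach as the paper: verify that $\OK^+(A)$ is a flabby pointed cosheaf with free quotients in odd degree and finite equal ranks, then invoke Theorem~\ref{thm:unital_classification_of_CK_algs} followed by Corollary~\ref{cor:unital_classification_of_KXA}. The paper compresses your inductive verification of the two cosheaf properties into the single phrase ``repeated use of the six-term exact sequence,'' so your argument is simply a more explicit rendering of the same proof.
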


\begin{proof}
Let~$B$ be a \Cstar{}algebra over~$X$ with intermediate cancellation that looks like a Cuntz--Krieger algebra. Repeated use of the six-term exact sequence shows that $\OK(B)$ has free quotients in odd degree and finite equal ranks. By Theorem~\ref{thm:unital_classification_of_CK_algs}, there is a tight Cuntz--Krieger algebra $\Cuntz_A$ over~$X$ with intermediate cancellation such that $\OK^+(B)\cong\OK^+(\Cuntz_A)$. By Corollary~\ref{cor:unital_classification_of_KXA}, we have $B\cong\Cuntz_A$.
\end{proof}

\begin{corollary}
	\label{cor:stable_phantoms_with_intermediate_cancellation}
Let~$A$ be a \Cstar{}algebra over~$X$ that looks like a stabilized Cuntz--Krieger algebra and has intermediate cancellation. Then~$A$ is stably isomorphic over~$X$ to a tight Cuntz--Krieger algebra over~$X$ with intermediate cancellation.
\end{corollary}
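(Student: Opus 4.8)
The plan is to imitate the proof of the unital Corollary~\ref{cor:phantoms_with_intermediate_cancellation}, but to bridge the gap to the stable setting by passing through the stabilization of a unital tight Cuntz--Krieger algebra and then invoking the strong stable classification of Theorem~\ref{thm:classification_of_KXA_with_RR0_and_RP}. The key enabling observation is that the range statement Theorem~\ref{thm:unital_classification_of_CK_algs} places no condition whatsoever on the distinguished element of a flabby pointed cosheaf, so we are free to equip $\OK(A)$ with an arbitrary pointing in order to feed it into the unital range theorem.

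First I would analyse the invariant $\OK(A)$. Since $A$ looks like a stabilized Cuntz--Krieger algebra it is a real-rank-zero Kirchberg $X$\nb-algebra with intermediate cancellation, so by Propositions~\ref{pro:characterization_of_KXA_with_vanishing_exponential_maps} and~\ref{pro:characterization_of_KXA_with_vanishing_index_maps} it has vanishing boundary maps, whence $\OK(A)$ is a flabby cosheaf by Lemma~\ref{lem:flabby_cosheaf}. Exactly as in Corollary~\ref{cor:phantoms_with_intermediate_cancellation}, repeated use of the six-term exact sequence~\eqref{eq:six-term_sequence} then shows that $\OK(A)$ has free quotients in odd degree and finite equal ranks: vanishing boundary maps split the six-term sequence into short exact sequences in each degree, so along a filtration of $X$ by open subsets with singleton differences the ranks add up to $\rank\OK(A)(U)_i=\sum_{x\in U}\rank\K_i\bigbr{A(\{x\})}$, and freeness of $\K_1$ at the points propagates through the split extensions; the local conditions in Definition~\ref{def:look_like_a_CKA} give $\rank\OK(A)(U)_0=\rank\OK(A)(U)_1<\infty$ for all $U\in\Open(X)$.

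Next I would equip the flabby cosheaf $\OK(A)$ with an arbitrary distinguished element $m\in\OK(A)(X)_0$ and apply Theorem~\ref{thm:unital_classification_of_CK_algs} to obtain a unital tight Cuntz--Krieger algebra $\Cuntz_B$ over~$X$ with intermediate cancellation satisfying $\OK^+(\Cuntz_B)\cong\bigbr{\OK(A),m}$; forgetting the pointing gives $\OK(\Cuntz_B)\cong\OK(A)$. Finally I would compare $A$ with the stabilization $\Cuntz_B\otimes\Compacts$. The latter is again a stable real-rank-zero Kirchberg $X$\nb-algebra with simple subquotients in $\Bootstrap$, and since $\OK$ is unchanged under stabilization its invariant $\OK(\Cuntz_B\otimes\Compacts)\cong\OK(\Cuntz_B)$ is flabby, so $\Cuntz_B\otimes\Compacts$ has vanishing boundary maps and hence intermediate cancellation by Proposition~\ref{pro:characterization_of_KXA_with_vanishing_index_maps}. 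As $\OK(A)\cong\OK(\Cuntz_B\otimes\Compacts)$, Theorem~\ref{thm:classification_of_KXA_with_RR0_and_RP} lifts this to a \Star{}isomorphism $A\cong\Cuntz_B\otimes\Compacts$ over~$X$, which is precisely the assertion that $A$ is stably isomorphic over~$X$ to the tight Cuntz--Krieger algebra $\Cuntz_B$ with intermediate cancellation. The main obstacle is conceptual rather than computational: there is no separately stated stable range theorem for Cuntz--Krieger algebras, so one must route the construction through the \emph{unital} range theorem and verify that its output survives stabilization with all the hypotheses of the stable classification theorem intact; the freedom in choosing the pointing $m$ is exactly what makes this detour legitimate.
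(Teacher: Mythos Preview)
Your proposal is correct and follows essentially the same approach as the paper's own proof: verify that $\OK(A)$ is a flabby cosheaf with free quotients in odd degree and finite equal ranks, equip it with an arbitrary pointing, feed it into Theorem~\ref{thm:unital_classification_of_CK_algs} to produce a tight Cuntz--Krieger algebra with the same $\OK$-invariant, and then invoke Theorem~\ref{thm:classification_of_KXA_with_RR0_and_RP}. You are in fact slightly more careful than the paper, which simply says ``by Theorem~\ref{thm:classification_of_KXA_with_RR0_and_RP} the algebras are stably isomorphic over~$X$'' without spelling out that the Cuntz--Krieger algebra must first be stabilized and that the stabilization retains real rank zero, intermediate cancellation, and the bootstrap condition; your explicit treatment of this step is a welcome clarification rather than a deviation.
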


\begin{proof}
Let~$B$ be a \Cstar{}algebra over~$X$ with intermediate cancellation that looks like a stabilized Cuntz--Krieger algebra. As in the previous proof, we see that $\OK(B)$ has free quotients in odd degree and finite equal ranks. We turn the cosheaf $\OK(B)$ into a pointed cosheaf by choosing an arbitrary element in $\K_0(B)$. By Theorem~\ref{thm:unital_classification_of_CK_algs}, there is a tight Cuntz--Krieger algebra~$\Cuntz_A$ over~$X$ with intermediate cancellation such that $\OK(B)\cong\OK(\Cuntz_A)$. By Theorem~\ref{thm:classification_of_KXA_with_RR0_and_RP}, the algebras~$B$ and~$\Cuntz_A$ are stably isomorphic over~$X$.
\end{proof}

\begin{theorem}
	\label{thm:extensions}
Let $I\rightarrowtail A\twoheadrightarrow B$ be an extension of \Cstar{}algebras. Assume that~$A$ is unital. Then~$A$ is a Cuntz--Krieger algebra with intermediate cancellation if and only if
\begin{itemize}
\item the ideal $I$ is stably isomorphic to a Cuntz--Krieger algebra with intermediate cancellation,
\item the quotient $B$ is a Cuntz--Krieger algebra with intermediate cancellation,
\item the boundary map $\K_*(B)\to\K_{*+1}(I)$ vanishes.
\end{itemize}
A similar assertion holds for extensions of unital purely infinite graph \Cstar{}algebras with intermediate cancellation.
\end{theorem}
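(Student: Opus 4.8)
The plan is to deduce both implications from the extension-permanence results of Section~3 together with the phantom-algebra Corollaries~\ref{cor:phantoms_with_intermediate_cancellation} and~\ref{cor:stable_phantoms_with_intermediate_cancellation}. First I would regard $A$ as a \Cstar{}algebra over $X=\Prim(A)$, so that $I=A(U)$ for the open subset $U=\Prim(I)$ and $B=A(C)$ for the closed complement $C=\Prim(B)$; since $I$ and $B$ turn out to have finite primitive ideal spaces, $X=U\sqcup C$ is a finite $T_0$-space and $A$ has finite ideal lattice. I would also record that the third bullet, vanishing of the boundary map $\K_*(B)\to\K_{*+1}(I)$, is exactly the conjunction of the vanishing of the index map $\K_1(B)\to\K_0(I)$ and of the exponential map $\K_0(B)\to\K_1(I)$; the former is governed by Corollary~\ref{cor:intermediate_cancellation_and_extensions} and the latter by the real-rank-zero analogue recorded in Remark~\ref{rem:rrzero}.

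For the forward implication, assume $A$ is a Cuntz--Krieger algebra with intermediate cancellation, so that $A$ is unital, separable, nuclear, purely infinite of real rank zero with finite ideal lattice, all of whose simple subquotients lie in~$\Bootstrap$ and have free $\K_1$-group and $\rank\K_0=\rank\K_1<\infty$. Corollary~\ref{cor:intermediate_cancellation_and_extensions} then gives intermediate cancellation of $I$ and $B$ and vanishing of the index map, while Remark~\ref{rem:rrzero} gives real rank zero of $I$ and $B$ and vanishing of the exponential map; together these yield the third bullet. The quotient $B$ is unital and purely infinite and inherits from $A$ all the structural and $\K$-theoretic conditions on its simple subquotients (which form a subset of those of $A$), so $B$ looks like a Cuntz--Krieger algebra and has intermediate cancellation; Corollary~\ref{cor:phantoms_with_intermediate_cancellation} then identifies $B$ with a Cuntz--Krieger algebra with intermediate cancellation. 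Since the ideal $I$ is non-unital, I would instead treat its stabilization $I\otimes\Compacts$: it is purely infinite of real rank zero, its bootstrap and rank conditions are inherited from $A$, and it retains intermediate cancellation because, by Proposition~\ref{pro:characterization_of_KXA_with_vanishing_index_maps}, intermediate cancellation is detected by the stabilization-invariant vanishing of index maps. Thus $I\otimes\Compacts$ looks like a stabilized Cuntz--Krieger algebra, and Corollary~\ref{cor:stable_phantoms_with_intermediate_cancellation} shows it --- hence $I$ --- is stably isomorphic to a Cuntz--Krieger algebra with intermediate cancellation.

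For the converse, assume the three bullets. Then $I$ and $B$ are separable, nuclear, purely infinite of real rank zero with intermediate cancellation and finite primitive ideal spaces, so $X$ is finite and $A$ has finite ideal lattice. The algebra $A$ is unital by hypothesis and is separable and nuclear because these properties pass to extensions. By Remark~\ref{rem:rrzero} the vanishing exponential map forces $A$ to have real rank zero, and then, combined with pure infiniteness of $I$ and $B$, the extension $A$ is itself purely infinite by the permanence results for real-rank-zero purely infinite algebras in~\cites{Rordam_Pasnicu:PI,Kirchberg_Rordam:Non-simple}. Now Corollary~\ref{cor:intermediate_cancellation_and_extensions}, applied through the vanishing index map, shows that $A$ has intermediate cancellation. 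Since the simple subquotients of $A$ are precisely those of $I$ and $B$, they lie in~$\Bootstrap$ and satisfy the freeness and equal-rank conditions, so $A$ looks like a Cuntz--Krieger algebra and has intermediate cancellation; Corollary~\ref{cor:phantoms_with_intermediate_cancellation} then exhibits $A$ as \Star{}isomorphic to a Cuntz--Krieger algebra with intermediate cancellation.

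The hard part will not be a single deep step but the bookkeeping that lets the phantom corollaries apply: I must verify that pure infiniteness survives the extension in the converse direction and that intermediate cancellation survives stabilization of the non-unital ideal $I$ --- the latter handled cleanly since Proposition~\ref{pro:characterization_of_KXA_with_vanishing_index_maps} expresses intermediate cancellation through the manifestly stable vanishing of index maps. The graph-algebra analogue I would prove by the identical scheme, replacing the finite-equal-ranks condition by the finite-ordered-ranks condition of Definition~\ref{def:properties_of_flabby_cosheaves} and using Theorems~\ref{thm:stable_classification_of_graph_algs} and~\ref{thm:unital_classification_of_graph_algs} together with Corollary~\ref{cor:unital_classification_of_KXA} in place of the Cuntz--Krieger classification; the ordered-rank inequality is inherited by ideals and quotients just as the equal-rank condition is.
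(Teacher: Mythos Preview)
Your proposal is correct and follows essentially the same approach as the paper: reduce both implications to the extension-stability of the conjunction ``looks like a Cuntz--Krieger algebra and has intermediate cancellation'' via Corollary~\ref{cor:intermediate_cancellation_and_extensions} and Remark~\ref{rem:rrzero}, and then invoke Corollaries~\ref{cor:phantoms_with_intermediate_cancellation} and~\ref{cor:stable_phantoms_with_intermediate_cancellation} (respectively Theorem~\ref{thm:unital_classification_of_graph_algs} and Corollary~\ref{cor:unital_classification_of_KXA} in the graph case). Your write-up is simply more explicit than the paper's about the individual permanence steps---pure infiniteness under extensions, stability of intermediate cancellation via Proposition~\ref{pro:characterization_of_KXA_with_vanishing_index_maps}---which the paper subsumes under the phrase ``behaves well with extensions.''
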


\begin{proof}
The crucial point is that the property of \emph{looking like} a Cuntz--Krieger algebra behaves well with extensions (see Remark \ref{rem:rrzero}). So does intermediate cancellation when considered for separable purely infinite \Cstar{}algebras by Corollary~\ref{cor:intermediate_cancellation_and_extensions}. We have that~$A\inOb\KKcat\bigbr{\Prim(A)}$ looks like a Cuntz--Krieger algebra and has intermediate cancellation if and only if
\begin{itemize}
\item the stabilization $I\otimes\Compacts\inOb\Cstarcat\bigbr{\Prim(I)}$ of the ideal~$I$ looks like a stabilized Cuntz--Krieger algebra and has intermediate cancellation,
\item the quotient $B\inOb\Cstarcat\bigbr{\Prim(B)}$ looks like a Cuntz--Krieger algebra and has intermediate cancellation,
\item the boundary map $\K_*(B)\to\K_{*+1}(I)$ vanishes.
\end{itemize}
Hence the result follows from Corollary~\ref{cor:phantoms_with_intermediate_cancellation} applied to~$A$ and~$B$ and from Corollary~\ref{cor:stable_phantoms_with_intermediate_cancellation} applied to~$I$. The assertion for unital graph \Cstar{}algebras follows similarly from Theorem~\ref{thm:unital_classification_of_graph_algs} and Corollary~\ref{cor:unital_classification_of_KXA}.
\end{proof}

As similar argument based on Theorems~\ref{thm:stable_classification_of_graph_algs} and~\ref{thm:classification_of_KXA_with_RR0_and_RP} leads to the following permanence result for stabilized purely infinite graph \Cstar{}algebras.

\begin{theorem}
Let $I\rightarrowtail A\twoheadrightarrow B$ be an extension of \Cstar{}algebras. Assume that~$A$ has finite ideal lattice. Then~$A$ is stably isomorphic to a purely infinite graph \Cstar{}algebra with intermediate cancellation if and only if
\begin{itemize}
\item the ideal $I$ is stably isomorphic to a purely infinite graph \Cstar{}algebra with intermediate cancellation,
\item the quotient $B$ is stably isomorphic to a purely infinite graph \Cstar{}algebra with intermediate cancellation,
\item the boundary map $\K_*(B)\to\K_{*+1}(I)$ vanishes.
\end{itemize}
\end{theorem}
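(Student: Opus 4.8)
The plan is to follow the blueprint of the proof of Theorem~\ref{thm:extensions}, replacing Cuntz--Krieger algebras by (stabilized) purely infinite graph \Cstar{}algebras and the rank conditions by the single freeness condition occurring in Theorem~\ref{thm:stable_classification_of_graph_algs}. Put $X=\Prim(A)$, which is a finite $T_0$-space because~$A$ has finite ideal lattice, and regard~$A$ as a tight \Cstar{}algebra over~$X$. The ideal~$I$ is then the distinguished ideal $A(U)$ for an open subset $U\subseteq X$, the quotient~$B$ is the distinguished subquotient $A(C)$ with $C=X\setminus U$, and~$I$, $B$ become tight \Cstar{}algebras over $U$ and~$C$ respectively. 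The hypothesis that $\K_*(B)\to\K_{*+1}(I)$ vanishes is the conjunction of the vanishing of the index map $\K_1(B)\to\K_0(I)$ and of the exponential map $\K_0(B)\to\K_1(I)$ of this extension.

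The first key step is a stable reformulation of the conclusion, playing the role of Corollary~\ref{cor:stable_phantoms_with_intermediate_cancellation}: for a \Cstar{}algebra~$D$ over a finite $T_0$-space~$Y$ with finite ideal lattice, $D$ is stably isomorphic to a purely infinite graph \Cstar{}algebra with intermediate cancellation if and only if $D\otimes\Compacts$ is a real-rank-zero Kirchberg $Y$-algebra with simple subquotients in~$\Bootstrap$, with $\K_1\bigbr{D(\{y\})}$ free for every $y\in Y$, and with intermediate cancellation. (In analogy with Definition~\ref{def:look_like_a_CKA}, call this ``looking like a stabilized purely infinite graph \Cstar{}algebra and having intermediate cancellation''.) For the forward direction one uses that these properties are preserved by stable isomorphism and that a stable isomorphism of \Cstar{}algebras identifies primitive ideal spaces, so that the graph algebra may be taken tight over~$Y$. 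For the backward direction, Propositions~\ref{pro:characterization_of_KXA_with_vanishing_exponential_maps} and~\ref{pro:characterization_of_KXA_with_vanishing_index_maps} show that $D\otimes\Compacts$ has vanishing boundary maps, whence $\OK(D\otimes\Compacts)$ is a flabby cosheaf by Lemma~\ref{lem:flabby_cosheaf}; an inductive splitting argument using the six-term sequence then upgrades freeness of the fibres to free quotients in odd degree in the sense of Definition~\ref{def:properties_of_flabby_cosheaves}. Theorem~\ref{thm:stable_classification_of_graph_algs} then produces a purely infinite tight graph \Cstar{}algebra over~$Y$ with intermediate cancellation and the same invariant~$\OK$, and Theorem~\ref{thm:classification_of_KXA_with_RR0_and_RP} upgrades this to a stable \Star{}isomorphism over~$Y$.

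With this reformulation in hand, the theorem becomes a permanence statement for ``looking like a stabilized purely infinite graph \Cstar{}algebra and having intermediate cancellation'' along the extension $A(U)\into A\prto A(C)$, where I apply the reformulation to~$A$, to~$I$ over~$U$, and to~$B$ over~$C$. I would assemble the equivalence from ingredients already established: the real-rank-zero part together with the vanishing of the exponential map is handled by Remark~\ref{rem:rrzero}; intermediate cancellation together with the vanishing of the index map is exactly Corollary~\ref{cor:intermediate_cancellation_and_extensions}; and the remaining conditions---being a stable Kirchberg algebra, having simple subquotients in~$\Bootstrap$, and having free $\K_1$ at each point---transfer between~$A$ and the pair $(I,B)$ because $X=U\sqcup C$ as a set of points and each of these conditions is checked fibrewise. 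The two boundary conditions supplied by Remark~\ref{rem:rrzero} and Corollary~\ref{cor:intermediate_cancellation_and_extensions} together assert precisely that $\K_*(B)\to\K_{*+1}(I)$ vanishes.

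The main obstacle I expect is the first step, and within it two points deserve care. One is the inductive argument that freeness of the simple subquotients' odd $\K$-theory, in the presence of vanishing boundary maps, yields free quotients $M(V)_1/M(U)_1$: one filters by open subsets adding one point at a time and uses that each short exact sequence $0\to\K_1\bigbr{D(U')}\to\K_1\bigbr{D(V')}\to\K_1\bigbr{D(\{y\})}\to 0$ splits because its quotient is free, so the middle group is again free. The other is the bookkeeping identifying stable isomorphism with stable isomorphism over the primitive ideal space, so that the classification and range results of the previous sections, which are phrased over~$X$, apply directly; this is routine for stable Kirchberg algebras but must be stated cleanly. Once these are settled, the argument is formally identical to that of Theorem~\ref{thm:extensions}, with Corollary~\ref{cor:stable_phantoms_with_intermediate_cancellation} replaced by the stable reformulation above.
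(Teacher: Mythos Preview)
Your proposal is correct and follows exactly the route the paper indicates: it explicitly says the theorem follows by ``a similar argument based on Theorems~\ref{thm:stable_classification_of_graph_algs} and~\ref{thm:classification_of_KXA_with_RR0_and_RP}'', and you have spelled out precisely that argument, including the stable analogue of Corollary~\ref{cor:stable_phantoms_with_intermediate_cancellation}. One small point: in your inductive step you conclude only that each $\K_1\bigbr{D(V')}$ is free, whereas Definition~\ref{def:properties_of_flabby_cosheaves} asks for freeness of the \emph{quotients} $M(V)_1/M(U)_1$; the splitting you obtain actually gives the direct-sum decomposition $\K_1\bigbr{D(V)}\cong\bigoplus_{x\in V}\K_1\bigbr{D(\{x\})}$ compatibly with inclusions (as in the last paragraph of the proof of Theorem~\ref{thm:stable_classification_of_graph_algs}), which immediately yields free quotients.
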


\section*{Acknowledgement}
I would like to thank Lawrence G.\ Brown and Mikael \Rordam{} for helpful correspondence, and James Gabe and Kristian Moi for useful discussions.

\begin{bibsection}
  \begin{biblist}
  
\bib{Arklint:PhantomCK}{article}{
   author={Arklint, Sara},
   title={Do phantom Cuntz--Krieger algebras exist?},
    eprint = {arXiv:1210.6515},
      year = {2012},
 }
 
\bib{Arklint-Bentmann-Katsura:Reduction}{article}{
   author={Arklint, Sara},
   author={Bentmann, Rasmus},
   author={Katsura, Takeshi},  
   title={Reduction of filtered $\K$-theory and a characterization of Cuntz--Krieger algebras},
    eprint = {arXiv:1301.7223},
      year = {2013},
 }
 
\bib{Bentmann:Thesis}{article}{
      author={Bentmann, Rasmus},
       title={Filtrated {K}-theory and classification of {$C^*$}-algebras},
        date={University of {G}\"ottingen, 2010},
        note={Diplom thesis, available online at: \href{www.math.ku.dk/~bentmann/thesis.pdf}{www.math.ku.dk/\textasciitilde bentmann/thesis.pdf}},
}
 
\bib{BK}{article}{
  author={Bentmann, Rasmus},
  author={K\"ohler, Manuel},
  title={Universal Coefficient Theorems for $C^*$-algebras over finite topological spaces},
  eprint = {arXiv:math/1101.5702},
  year = {2011},
}

\bib{Bentmann-Meyer}{article}{
  author={Bentmann, Rasmus},
  author={Meyer, Ralf},
  title={Circle actions on $C^*$-algebras up to $\KK$-equivalence},
  note = {in preparation},
}

\bib{Blackadar:Kbook}{book}{
   author={Blackadar, Bruce},
   title={$K$-theory for operator algebras},
   series={Mathematical Sciences Research Institute Publications},
   volume={5},
   edition={2},
   publisher={Cambridge University Press},
   place={Cambridge},
   date={1998},
   pages={xx+300},
   isbn={0-521-63532-2},
   review={\MRref{1656031}{99g:46104}},
}

\bib{Bonkat:Thesis}{thesis}{
  author={Bonkat, Alexander},
  title={Bivariante \(K\)\nobreakdash -Theorie f\"ur Kategorien projektiver Systeme von \(C^*\)\nobreakdash -Al\-ge\-bren},
  date={2002},
  institution={Westf. Wilhelms-Universit\"at M\"unster},
  type={phdthesis},
  language={German},
  note={Available at the Deutsche Nationalbibliothek at \url {http://deposit.ddb.de/cgi-bin/dokserv?idn=967387191}},
}

\bib{Bredon:Cosheaves}{article}{
   author={Bredon, Glen E.},
   title={Cosheaves and homology},
   journal={Pacific J. Math.},
   volume={25},
   date={1968},
   pages={1--32},
   issn={0030-8730},
   review={\MRref{0226631}{37 \#2220}},
}

\bib{Brown:intermediate_cancellation}{article}{
   author={Brown, Lawrence G.},
   date={2013},
   note={personal communication},
}

\bib{Brown-Pedersen:RR0}{article}{
   author={Brown, Lawrence G.},
   author={Pedersen, Gert K.},
   title={$C^*$-algebras of real rank zero},
   journal={J. Funct. Anal.},
   volume={99},
   date={1991},
   number={1},
   pages={131--149},
   issn={0022-1236},
   review={\MRref{1120918}{(92m:46086)}},
   doi={10.1016/0022-1236(91)90056-B},
}

\bib{Brown-Pedersen:Non-stable}{article}{
   author={Brown, Lawrence G.},
   author={Pedersen, Gert K.},
   title={Non-stable $\K$-theory and entremally rich $C^*$-algebras},
   eprint = {arXiv:math/0708.3078},
   year={2007},
}

\bib{Cuntz--Krieger:A_class}{article}{
   author={Cuntz, Joachim},
   author={Krieger, Wolfgang},
   title={A class of $C^{\ast} $-algebras and topological Markov chains},
   journal={Invent. Math.},
   volume={56},
   date={1980},
   number={3},
   pages={251--268},
   issn={0020-9910},
   review={\MRref{561974}{ (82f:46073a)}},
   doi={10.1007/BF01390048},
}

\bib{Cuntz:A_class_II}{article}{
   author={Cuntz, J.},
   title={A class of $C^{\ast} $-algebras and topological Markov chains.
   II. Reducible chains and the Ext-functor for $C^{\ast} $-algebras},
   journal={Invent. Math.},
   volume={63},
   date={1981},
   number={1},
   pages={25--40},
   issn={0020-9910},
   review={\MRref{608527}{ (82f:46073b)}},
   doi={10.1007/BF01389192},
}

\bib{Eilers-Restorff-Ruiz:generalized_meta_theorem}{article}{
  author={Eilers, S\o ren},
  author={Restorff, Gunnar},
  author={Ruiz, Efren},
  title={Strong classification of extensions of classifiable $C^*$-algebras },
   eprint = {arXiv:math/arXiv:1301.7695},
   year={2013},
}

\bib{hongszymanski}{article}{
    AUTHOR = {Hong, Jeong Hee},
    author = {Szyma{\'n}ski, Wojciech},
     TITLE = {Purely infinite {C}untz-{K}rieger algebras of directed graphs},
   JOURNAL = {Bull. London Math. Soc.},
    VOLUME = {35},
      YEAR = {2003},
    NUMBER = {5},
     PAGES = {689--696},
      ISSN = {0024-6093},
       DOI = {10.1112/S0024609303002364},
       URL = {http://dx.doi.org/10.1112/S0024609303002364},
}

\bib{Kirchberg}{article}{
  author={Kirchberg, Eberhard},
  title={Das nicht-kommutative Michael-Auswahlprinzip und die Klassifikation nicht-einfacher Algebren},
  language={German, with English summary},
  conference={ title={$C^*$-algebras}, address={M\"unster}, date={1999}, },
  book={ publisher={Springer}, place={Berlin}, },
  date={2000},
  pages={92--141},
  review={\MRref {1796912}{2001m:46161}},
}

\bib{Kirchberg_Rordam:Non-simple}{article}{
   author={Kirchberg, Eberhard},
   author={R{\o}rdam, Mikael},
   title={Non-simple purely infinite $C^\ast$-algebras},
   journal={Amer. J. Math.},
   volume={122},
   date={2000},
   number={3},
   pages={637--666},
   issn={0002-9327},
   review={\MRref{1759891}{ (2001k:46088)}},
}

\bib{Kirchberg_Rordam:Infinite}{article}{
   author={Kirchberg, Eberhard},
   author={R{\o}rdam, Mikael},
   title={Infinite non-simple $C^*$-algebras: absorbing the Cuntz
   algebras $\mathcal O_\infty$},
   journal={Adv. Math.},
   volume={167},
   date={2002},
   number={2},
   pages={195--264},
   issn={0001-8708},
   review={\MRref{1906257}{(2003k:46080)}},
   doi={10.1006/aima.2001.2041},
}

\bib{Lin-Rordam:Extensions_of_inductive_limits}{article}{
   author={Lin, Hua Xin},
   author={R{\o}rdam, Mikael},
   title={Extensions of inductive limits of circle algebras},
   journal={J. London Math. Soc. (2)},
   volume={51},
   date={1995},
   number={3},
   pages={603--613},
   issn={0024-6107},
   review={\MRref{1332895}{ (96m:46104)}},
   doi={10.1112/jlms/51.3.603},
}
 
\bib{Meyer-Nest:Homology_in_KK}{article}{
   author={Meyer, Ralf},
   author={Nest, Ryszard},
   title={Homological algebra in bivariant $K$-theory and other triangulated
   categories. I},
   conference={
      title={Triangulated categories},
   },
   book={
      series={London Math. Soc. Lecture Note Ser.},
      volume={375},
      publisher={Cambridge Univ. Press},
      place={Cambridge},
   },
   date={2010},
   pages={236--289},
   review={\MRref{2681710 }{(2012f:19015)}},
}
 
\bib{MN:Filtrated}{article}{
   author={Meyer, Ralf},
   author={Nest, Ryszard},
   title={${\rm C}^*$-algebras over topological spaces: filtrated
   $\K$-theory},
   journal={Canad. J. Math.},
   volume={64},
   date={2012},
   number={2},
   pages={368--408},
   issn={0008-414X},
   review={\MRref{2953205}{}},
   doi={10.4153/CJM-2011-061-x},
}

\bib{MN:Bootstrap}{article}{
  author={Meyer, Ralf},
  author={Nest, Ryszard},
  title={$C^*$-algebras over topological spaces: the bootstrap class},
  journal={M\"unster J. Math.},
  volume={2},
  date={2009},
  pages={215--252},
  issn={1867-5778},
  review={\MRref {2545613}{}},
 }
 
 \bib{Raeburn}{book}{
   author={Raeburn, Iain},
   title={Graph algebras},
   series={CBMS Regional Conference Series in Mathematics},
   volume={103},
   publisher={Published for the Conference Board of the Mathematical
   Sciences, Washington, DC},
   year={2005},
   pages={vi+113},
   isbn={0-8218-3660-9},
   review={\MRref{2135030}{2005k:46141}},
}

\bib{Restorff:Thesis}{thesis}{
  author={Restorff, Gunnar},
  title={Classification of Non-Simple $\textup C^*$\nobreakdash -Algebras},
  type={phdthesis},
  institution={K{\o }benhavns Universitet},
  date={2008},
  isbn={978-87-91927-25-6},
  eprint={http://www.math.ku.dk/~restorff/papers/afhandling_med_ISBN.pdf},
}

\bib{Rordam:Classification_of_nuclear}{article}{
   author={R{\o}rdam, M.},
   title={Classification of nuclear, simple $C^*$-algebras},
   conference={
      title={Classification of nuclear $C^*$-algebras. Entropy in
      operator algebras},
   },
   book={
      series={Encyclopaedia Math. Sci.},
      volume={126},
      publisher={Springer},
      place={Berlin},
   },
   date={2002},
   pages={1--145},
   review={\MRref{1878882}{(2003i:46060)}},
}

\bib{Rordam_Pasnicu:PI}{article}{
   author={Pasnicu, Cornel},
   author={R{\o}rdam, Mikael},
   title={Purely infinite $C^*$-algebras of real rank zero},
   journal={J. Reine Angew. Math.},
   volume={613},
   date={2007},
   pages={51--73},
   issn={0075-4102},
   review={\MRref{2377129}{(2009b:46119)}},
   doi={10.1515/CRELLE.2007.091},
}

 \bib{RS}{article}{
   author={Rosenberg, Jonathan},
   author={Schochet, Claude},
   title={The K\"unneth theorem and the universal coefficient theorem for Kasparov's generalized $\K$-functor},
   journal={Duke Math. J.},
   volume={55},
   date={1987},
   number={2},
   pages={431--474},
   issn={0012-7094},
   review={\MRref {894590}{88i:46091}},
   doi={10.1215/S0012-7094-87-05524-4},
 }

 \bib{Ruiz-Tomforde:Ideals}{article}{
   author={Ruiz, Efren},
   author={Tomforde, Mark},
   title={Ideals in Graph Algebras},
    eprint = {arXiv:math/1205.1247},
      year = {2012},
 }
 
 \bib{stacks-project}{book}{
  author={The Stacks Project Authors},
  title={Stacks Project},
  note={available at: \href{http://stacks.math.columbia.edu/}{http://stacks.math.columbia.edu/}},
}

  \end{biblist}
\end{bibsection}

\end{document}